\newcommand{\bfi}{\bfseries\itshape}
\newtheorem{theorem}{Theorem}
\theoremstyle{plain}
\newtheorem{definition}{Definition}
\newtheorem{lemma}{Lemma}
\newtheorem{proposition}{Proposition}
\newtheorem{remark}{Remark}
\numberwithin{equation}{section}
\newcommand{\ed}{{\mathbf{d}}}
\newcommand{\ad}{{\mathrm{ad}}}
\newcommand{\h}{{\mathfrak{h}}}
\newcommand{\A}{{\mathcal{A}}}
\newcommand{\tA}{{\tilde{\mathcal{A}}}}
\newcommand{\Orbit}{{\mathcal{O}}}
\newcommand{\g}{{\mathfrak{g}}}
\newcommand{\ddt}{{\left. \frac{d}{dt}\right |_{t=0}}}
\begin{document}

\title[On singular Poisson Sternberg spaces]
{On singular Poisson Sternberg spaces}
\author{M. Perlmutter}
\address{(MP) Institute of Fundamental Sciences, Massey University, Palmerston North, New Zealand}
\email{M.Perlmutter@massey.ac.nz}
\author{M. Rodriguez-Olmos}
\address{(MR-O) Section de Mathematiques, Ecole Polythecnique F\'ed\'erale de Lausanne, Switzerland}
\email{miguel.rodriguez@epfl.ch}
\subjclass{Primary 53E20; Secondary 53D17, 57N80.}
\keywords{Symplectic geometry, stratified spaces, reduction, momentum maps, cotangent
bundles.}

\begin{abstract}
We obtain a theory of  stratified Sternberg spaces thereby ex\-ten\-ding the theory of cotangent
bundle reduction for free actions to the singular case where the action on the base
manifold consists of only one orbit type. We find that the symplectic reduced spaces
are stratified topological fiber bundles over the cotangent bundle of the orbit space. We also obtain a Poisson stratification of the Sternberg space.
To construct the singular Poisson Sternberg space we develop an appropriate theory of singular connections
for proper group actions on a single orbit type manifold including a theory of holonomy extending the usual Ambrose-Singer theorem for principal bundles.

\end{abstract}

\maketitle

\section{Introduction}
In this paper we consider the problem of cotangent bundle reduction for a proper action of
a Lie group on a manifold with the simplifying assumption that the base manifold
on which the group acts consists of just one orbit type. This is a major simplifiying assumption to the more general problem where there are multiple orbit types on the base manifold. However, the resulting theory is already interesting and leads to a generalization of the theory of connections on a principal bundle. We will motivate the theory with a class of examples generated by homogeneous spaces where a group $G$ acts on $G/H$ and then on its cotangent bundle by the lifted action. Since this is a transitive action it is clear that there is just one orbit type. This example will appear later as a fundamental ingredient of the theory.

After reviewing some preliminary results on cotangent bundle reduction  in Section 2, we begin, in Section 3, with the transitive case of a base manifold that is a homogeneous space.  We consider the quotient of $T^{\ast}(G/H)$ by the action of the group $G$. Using commuting reduction by stages, we show that this is a Poisson stratified space with strata determined by the coadjoint induced action of $H$ on $\mathfrak{h}^\circ$. If we denote an element of the isotropy lattice for this action by $(K)$, then the Poisson strata are given by $\mathfrak{h}^\circ_{(K)}/H$. The Poisson bracket on each stratum is induced by the Lie-Poisson structure on $\mathfrak{g}^{\ast}$. We also show that the symplectic leaves of each stratum are given by $(\mathcal{O}_{\mu}\cap \mathfrak{h}^\circ_{(K)})/H$. Notice that this result is the singular generalization of the Lie-Poisson structures and coadjoint orbits, in the sense that if the action is free, then $H=e$ and Poisson and symplectic reduction produce the Lie-Poisson structure and the Kostant-Kirillov-Souriau (KKS) structure on the coadjoint orbits respectively.

Significantly, this particular case of singular cotangent bundle reduction does not require a connection precisely because the action on the base manifold is transitive so every tangent vector on the $G$-principal bundle $G/H\rightarrow \{\cdot\}$ is vertical and therefore every nonzero covector has nonzero momentum, i.e. the splitting of the cotangent bundle into zero momentum and nonzero momentum covectors is trivial.

Next we consider the non-transitive case. We need to first split the
tangent bundle of the base manifold $M$ into vertical and horizontal
distributions. This can be done with a $G$-invariant metric,
guaranteed by the properness of the action. The next step,
undertaken in Section 4, is to associate a connection to this
splitting. At first glance this can't be done in the usual way since
a surjective mapping from the tangent space at any point to the Lie
algebra will have kernel with dimension equal to the codimension of
the algebra. On the other hand, the horizontal spaces have
codimension equal to
$\mathrm{dim}\,\mathfrak{g}-\mathrm{dim}\,\mathfrak{g}_m$ where
$\mathfrak{g}_m$ is the nontrivial stabilizer algebra at the point
$m$. The resolution is to form a vector bundle $\nu\rightarrow M$,
thanks to the properness of the action, whose fibers are isomorphic
to $\mathfrak{g}/\mathfrak{g}_m$ and define a connection, which we
call a \emph{singular} connection, as a surjective bundle map
covering the identity from $TM$ to $\nu$. This leads to an invariant
splitting of the tangent bundle.

With this in place we study the orbit type stratification of $TM$ and prove, in Theorem \ref{lattice theorem}, that the isotropy lattice is determined by the action of $H$ on $\mathfrak{g}/\mathfrak{h}$ where $\mathfrak{h}$ is the stabilizer algebra at some point $m\in M$. Relative to the splitting induced from the connection, we explicitly determine the stratification of $TM$ and its quotient $(TM)/G$ and we also write down a stratified version of the Atiyah sequence for a principal bundle.

In 4.3 we introduce the curvature of the singular connection. Since the singular connection is an Ehresmann connection, we can use the curvature theory for an Ehresmann connection rather than attempt to define an exterior derivative of a bundle map. Using this as a starting point, we are able to prove, in Proposition \ref{proposition curvature}, that the curvature is a $G$-equivariant bundle map $\wedge^2TM\rightarrow \nu$ that takes values in the stratum of $\nu$ that contains the zero section.

In addition, in Theorem \ref{Ambrose Singer} we prove an Ambrose-Singer theorem for the singular connection which demonstrates that the holonomy group at a point $m\in M$ is contained as a subgroup in $N(H)/H$ where $H$ is the stabilizer of $m$. Along the way to doing this, we obtain a one-to-one correspondence between singular connections on $M\rightarrow M/G$ and principal connections on the bundle $M_{H}\rightarrow M_{H}/(N(H)/H)$. This concludes our study of the geometry of the singular connection.

In Section 5, we apply the singular connection to the construction
of a connection dependent realization of the symplectic structure on
the  reduced spaces $\mathbf{J}^{-1}(\mathcal{O})/G$, where
$\Orbit\in\g^*$ is a chosen coadjoint orbit contained in the image
of the momentum map $\mathbf{J}:T^{\ast}M\rightarrow \g^{\ast}$
associated to the cotangent lifted action of $G$ (given by
$\left\langle\mathbf{J}(\alpha_m),\xi\right\rangle=\left\langle
\alpha_m,\xi_{M}(m)\right\rangle$). The construction follows the one
for the Sternberg space when the action is free \cite{St, PeRa}.
This is the  content of Theorem \ref{minimal coupling theorem},
which shows that the singular reduced Sternberg space is a bundle
over $T^*(M/G)$ whose symplectic fibers are
$(\mathcal{O}_{\mu}\cap\mathfrak{h}_{(K)}^\circ)/H$ which are shown,
in Section 3, to be the symplectic leaves of the Poisson strata of
$\mathfrak{h}^\circ/H$.

Finally, in Section 6 we compute the full Poisson stratification of
$(T^{\ast}M)/G$ in the Sternberg representation. The strata of
$(T^{\ast}M)/G$ are determined by the $H$-isotropy lattice of
$\mathfrak{h}^\circ$. We show in Theorem \ref{gauge poisson theorem}
that, using the singular connection, we can realize each stratum as
a bundle over $T^{\ast}(M/G)$ with fibers isomorphic to the Poisson
strata in the homogeneous Lie-Poisson problem, that is
$\mathfrak{h}^\circ_{(K)}/H$. The Poisson  bracket obtained on this
space and it generalizes the gauge Poisson bracket in the free
theory \cite{Mont, PeRa}. The bracket consists of a canonical term
associated to the canonical symplectic structure on $T^{\ast}(M/G)$,
a coupling term that involves the reduced curvature of the singular
connection and a term involving the homogeneous Lie-Poisson
structure on the fibers $\mathfrak{h}^\circ_{(K)}/H$.

The theory developed for the problem with a single orbit type will play an important role in the
solution to the general problem of singular cotangent bundle reduction for base manifolds
admitting multiple orbit type which is the subject of a forthcoming paper \cite{PeRaRo}.

Finally we remark that a related approach to the problem studied in this paper
carried out in \cite{Ho}, following the alternative realization of
$(T^{\ast}M)/G$  due to Weinstein \cite{We} in the free case.

\noindent {\bf Acknowledgements.} M. Perlmutter wishes to
acknowledge the generous support of the Bernoulli Center where part
of the research for this paper was completed.

\section{Background and preliminaries}

\subsection{Proper actions with single orbit type}
Let $M$ be a single orbit type manifold with respect to the proper action of
the Lie group $G$. Thus $M=M_{(H)}$ (where $M_{(H)}:=\{m\in M : G_{m}=gHg^{-1} \mathrm{\ for\ some\ }g\in G\}$) for some compact subgroup $H\subset G$. It is well known (see \cite{DuiKol}) that the orbit
space $M_{(H)}/G$ is then a smooth manifold. One way to see this is to
consider the smooth submanifold $M_H$ of $M$ consisting of the points
in $M$ with stabilizer \emph{precisely} equal to $H$. It is easy to see that
the subgroup $N(H)$, the normalizer group of $H$ in $G$, acts on $M_H$ and that
every orbit in $M$ intersects $M_H$ on an $N(H)$ orbit. Furthermore, the quotient
group $N(H)/H$ acts freely on $M_H$ and generates the same orbit space.
Therefore we have
\begin{equation*}
M_{(H)}/G\simeq M_H/(N(H)/H)
\end{equation*}
and the right hand side is the base space of the principal $N(H)/H$-bundle,
$M_H\rightarrow M_H/(N(H)/H)$. We will see that the subgroup $N(H)$ plays
a crucial role in the geometry of connections that we are going to define.

\subsection{The Sternberg space for a free action}

Here we recall an important realization of the Poisson reduced space $(T^{\ast}Q)/G$ obtainable once a principal connection $\mathcal{A}$ on the principal bundle $\pi:Q\rightarrow Q/G$ is fixed. (To distinguish the free case from the singular one, we denote the manifold on which $G$ acts freely by $Q$ instead of $M$).
The connection allows us to realize the reduced space as a $\mathfrak{g}^{\ast}$ fiber bundle over the reduced cotangent bundle $T^{\ast}(Q/G)$. Detailed proofs of the results in this section are found in \cite{PeRa}.

The construction of the Sternberg space proceeds in two steps. First one pulls
back the configuration space bundle $\pi:Q\rightarrow Q/G$ by the cotangent
bundle projection $\tau_{Q/G}:T^{\ast}(Q/G)\rightarrow Q/G$ to obtain the
$G$-principal bundle
\begin{equation*}
\tilde{Q}=\{(\alpha_{[q]},q) \in T ^\ast (Q/G) \times Q : [q]=\pi(q), q \in Q \}
\end{equation*}
over $T^\ast(Q/G)$ with fiber over $\alpha_{[q]}$ diffeomorphic to
$\pi^{-1}([q])$. Recall that the $G $-action on $\tilde{Q} $ is given by $g
\cdot ( \alpha_{[q]}, q) : = ( \alpha_{[q]}, g \cdot q) $ for any $g \in G $
and $( \alpha_{[q]}, q) \in \tilde{Q} $. The diagram defining this pull back
bundle and the associated maps is given by

\unitlength=5mm
\begin{center}
\begin{picture}(10,6)
\put(1,5){\makebox(0,0){$\tilde{Q}$}}
\put(9,5){\makebox(0,0){$Q$}}
\put(1,0){\makebox(0,0){$T^\ast(Q/G)$}}
\put(9,0){\makebox(0,0){$Q/G$}}
\put(1,4){\vector(0,-1){3}}
\put(9,4){\vector(0,-1){3}}
\put(2.2,5){\vector(1,0){5.7}}
\put(2.6,0){\vector(1,0){5.4}}
\put(1,2){\makebox(0,0){$$}}
\put(9.5,2.5){\makebox(0,0){$\pi$}}
\put(0.4,2.5){\makebox(0,0){$\pi^{\sharp}$}}
\put(5,5.6){\makebox(0,0){$\tilde{\tau}$}}
\put(5,.7){\makebox(0,0){$\tau_{Q/G}$}}
\end{picture}
\end{center}
\bigskip
\medskip
where $\pi^{\sharp}$ and $\tilde{\tau}$ are the projections
onto the first and second factors respectively. The following fact about
$\tilde{Q}$ will be often used in the sequel.
\begin{proposition}
\label{isomorphism of q with annihilator of the vertical}
$\tilde{Q}$ is also a vector bundle over $Q$ isomorphic to the annihilator $V(Q)^\circ
\subset T^\ast Q$  of the vertical bundle $V(Q) \subset TQ$. The fibers of these vector
subbundles are given by $V(Q)_q := \ker T_q \pi = \{\xi_Q(q) : \xi \in \mathfrak{g} \}
\subset T_q Q$ and
$V(Q)^\circ_q : =  \{\alpha_{q}\in T^{\ast}_{q}Q : \left\langle
\alpha_{q},\xi_{Q}(q)\right\rangle =0\,\,\forall\xi\in\g\}
\subset T_{q}^\ast Q$ for each $q \in Q$. Consequently, $\tilde{Q}$ is bundle isomorphic to $\mathbf{J}^{-1}(0)$.
\end{proposition}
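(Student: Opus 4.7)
The plan is to establish both assertions through a single natural map dual to $T\pi$. The second projection $\tilde{\tau}:\tilde{Q}\to Q$ sending $(\alpha_{[q]},q)\mapsto q$ has fiber $\{q\}\times T^{\ast}_{[q]}(Q/G)$, a vector space of dimension $\dim(Q/G)$; local triviality is inherited by pulling back any local trivialization of $T^{\ast}(Q/G)$ along $\pi$, so $\tilde{Q}$ is indeed a vector bundle over $Q$.

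Next I would exhibit the isomorphism $\Phi:\tilde{Q}\to V(Q)^\circ$ covering the identity on $Q$ by the formula $\Phi(\alpha_{[q]},q):=(T_q\pi)^{\ast}\alpha_{[q]}$. Because $\pi:Q\to Q/G$ is a surjective submersion, $T_q\pi:T_qQ\to T_{[q]}(Q/G)$ is surjective with kernel $V(Q)_q$; the standard linear-algebra duality (image of the transpose equals annihilator of the kernel) then forces $(T_q\pi)^{\ast}$ to be injective with image exactly $V(Q)^\circ_q$. Hence $\Phi$ is a fibrewise linear isomorphism, and the smoothness of $(q,\alpha_{[q]})\mapsto (T_q\pi)^{\ast}\alpha_{[q]}$ promotes it to a vector bundle isomorphism over $Q$.

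To identify $V(Q)^\circ$ with $\mathbf{J}^{-1}(0)$ one simply unwinds the definition of the cotangent-lifted momentum map: $\mathbf{J}(\alpha_q)=0$ if and only if $\langle\alpha_q,\xi_Q(q)\rangle=0$ for every $\xi\in\g$, and the infinitesimal generators $\{\xi_Q(q):\xi\in\g\}$ span exactly $V(Q)_q$. Thus $V(Q)^\circ=\mathbf{J}^{-1}(0)$ as subbundles of $T^{\ast}Q$, and composing with $\Phi$ produces the desired bundle isomorphism $\tilde{Q}\simeq\mathbf{J}^{-1}(0)$. Equivariance with respect to the $G$-actions, although not explicitly claimed, follows immediately from $\pi\circ L_g=\pi$, which forces $(T_{g\cdot q}\pi)^{\ast}$ to intertwine the given $G$-action on $\tilde{Q}$ (trivial on the $T^{\ast}(Q/G)$-factor) with the cotangent-lifted action on $V(Q)^\circ\subset T^{\ast}Q$.

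There is no serious obstacle: the statement is essentially a dictionary between three presentations of the same object, obtained by pulling back covectors along $\pi$. The only point that deserves mild care is the dualization step identifying $\mathrm{im}(T_q\pi)^{\ast}$ with $V(Q)^\circ_q$, but this is immediate once one invokes surjectivity of $T_q\pi$.
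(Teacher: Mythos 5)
Your argument is correct and coincides with the standard one: the paper itself defers the proof of this proposition to \cite{PeRa}, but the analogous singular statement in Section 5 is proved there with exactly the same map $(\alpha_{[m]},m)\mapsto (T_m\pi)^{\ast}\alpha_{[m]}$, using surjectivity of $T_q\pi$ and the identification of $\mathrm{im}\,(T_q\pi)^{\ast}$ with the annihilator of $\ker T_q\pi$. Nothing is missing; the dimension count and the unwinding of the momentum map definition are handled correctly.
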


The second step is to form the coadjoint bundle of $\tilde{Q}$, that is, the associated vector
bundle to the $G$-principal bundle $\pi^\sharp:\tilde{Q} \rightarrow T^\ast (Q/G)$ given
by the coadjoint representation of $G$ on $\mathfrak{g}^\ast$.  The {\bfi
Sternberg space\/}, denoted by $S$, is thus defined by
\begin{equation*}
S:=\tilde{Q}\times_{G}\mathfrak{g}^{\ast}.
\end{equation*}
Abusing notation we will denote the fiber projection $\pi^{\sharp}:S\rightarrow T^{\ast}(Q/G)$ with the same symbol as the quotient map $\pi^{\sharp}:\tilde{Q}\rightarrow T^{\ast}(Q/G)$.
Using the connection $\mathcal{A}$ we then construct the bundle isomorphism
to the Poisson reduced space $(T^{\ast}Q)/G$ as follows.

\begin{proposition}
\label{prop_StoReduced}
The map $\varphi_{ \mathcal{A}}: \tilde{Q} \times \mathfrak{g}^\ast
\rightarrow T ^\ast Q $ given by
\begin{equation*}
\varphi_{\mathcal{A}}\left(\left(\alpha_{[q]}, q \right),
\mu\right): = T^{\ast}_{q}\pi(\alpha_{[q]})
+\mathcal{A}(q)^{\ast}\mu
\end{equation*}
is a $G$-equivariant vector bundle isomorphism over $Q$. It descends to a
vector bundle isomorphism over $Q/G$
\begin{equation*}
\Phi_{\mathcal{A}}:S\rightarrow (T^{\ast}Q)/G.
\end{equation*}
\end{proposition}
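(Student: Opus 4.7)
The plan is to establish the proposition in three stages: first show that $\varphi_{\mathcal{A}}$ is a vector bundle isomorphism over $Q$, then verify its $G$-equivariance, and finally descend it to an isomorphism on the quotient $S\to (T^{\ast}Q)/G$.

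For the first stage, the map $\varphi_{\mathcal{A}}$ manifestly covers the identity on $Q$ and is linear in the fiber variables $(\alpha_{[q]},\mu)$. Fiberwise bijectivity reduces to establishing the direct sum decomposition
\begin{equation*}
T_q^{\ast}Q = V(Q)^\circ_q \oplus \mathcal{A}(q)^{\ast}(\mathfrak{g}^{\ast}).
\end{equation*}
The first summand is precisely the image of $T_q^{\ast}\pi$, which is injective because $T_q\pi$ is surjective; by Proposition \ref{isomorphism of q with annihilator of the vertical} this image is naturally identified with the fiber of $\tilde{Q}$ over $q$. For the second summand, $\mathcal{A}(q)^{\ast}$ is injective (dual to the surjection $\mathcal{A}(q):T_qQ\to\mathfrak{g}$), and its image intersects $V(Q)^\circ_q$ trivially because evaluating $\mathcal{A}(q)^{\ast}\mu$ on the vertical vector $\xi_Q(q)$ gives $\langle\mu,\xi\rangle$, which vanishes for every $\xi$ only when $\mu=0$. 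A dimension count then closes the decomposition. The inverse of $\varphi_{\mathcal{A}}$ is explicit: given $\beta_q\in T^{\ast}_qQ$, set $\mu=\mathbf{J}(\beta_q)$, so $\langle\mu,\xi\rangle=\langle\beta_q,\xi_Q(q)\rangle$; the residue $\beta_q-\mathcal{A}(q)^{\ast}\mu$ then lies in $V(Q)^\circ_q$ and equals $T_q^{\ast}\pi(\alpha_{[q]})$ for a unique $\alpha_{[q]}\in T_{[q]}^{\ast}(Q/G)$.

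For the second stage, $G$-equivariance splits into two pieces. The pullback term $T_q^{\ast}\pi(\alpha_{[q]})$ is equivariant because $\pi$ is $G$-invariant, so $T_{g\cdot q}^{\ast}\Phi_{g^{-1}}\circ T_q^{\ast}\pi = T_{g\cdot q}^{\ast}\pi$ on $T_{[q]}^{\ast}(Q/G)$. The connection term uses the standard equivariance $\mathcal{A}(g\cdot q)\circ T_q\Phi_g=\mathrm{Ad}_g\circ\mathcal{A}(q)$, whose dual reads $T_{g\cdot q}^{\ast}\Phi_{g^{-1}}\circ\mathcal{A}(q)^{\ast}=\mathcal{A}(g\cdot q)^{\ast}\circ\mathrm{Ad}_{g^{-1}}^{\ast}$. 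These combine to give the required identity matching the action on $\tilde{Q}\times\mathfrak{g}^{\ast}$, which is $g\cdot((\alpha_{[q]},q),\mu)=((\alpha_{[q]},g\cdot q),\mathrm{Ad}_{g^{-1}}^{\ast}\mu)$. Together with the linearity established above, $\varphi_{\mathcal{A}}$ is a $G$-equivariant vector bundle isomorphism over $Q$. Passing to the quotient is then formal: by the universal property of associated bundles, $\varphi_{\mathcal{A}}$ descends to a well-defined, smooth, fiberwise linear bijection $\Phi_{\mathcal{A}}:S=\tilde{Q}\times_G\mathfrak{g}^{\ast}\to(T^{\ast}Q)/G$, hence a vector bundle isomorphism over $Q/G$.

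The main obstacle is purely bookkeeping: one must consistently align the connection-induced splitting $T_qQ=V(Q)_q\oplus\ker\mathcal{A}(q)$ with its dual counterpart $T_q^{\ast}Q=V(Q)^\circ_q\oplus\mathcal{A}(q)^{\ast}(\mathfrak{g}^{\ast})$ and keep the $G$-action straight on all three of $\tilde{Q}$, $\mathfrak{g}^{\ast}$, and $T^{\ast}Q$. Once the decomposition is in place and the dual equivariance of $\mathcal{A}$ has been written down, the remaining verifications are routine.
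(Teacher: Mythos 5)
Your argument is correct and complete: the splitting $T_q^{\ast}Q = V(Q)^\circ_q \oplus \mathcal{A}(q)^{\ast}(\mathfrak{g}^{\ast})$, the explicit inverse built from $\mu=\mathbf{J}(\beta_q)$, and the dual equivariance identity for the connection are precisely the standard route to this result, which the paper itself does not prove but defers to \cite{PeRa}. No gaps to report.
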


The gauge Poisson bracket on $S$ is the pull back by $\Phi_{\mathcal{A}}$ of the the natural reduced Poisson
structure on $(T^{\ast}Q)/G$. In order to study it we first introduce the necessary notions of horizontal lifts and covariant derivatives in the context needed for our purposes.
First one constructs the horizontal lift on the $G$-bundle $\tilde{Q}\rightarrow T^{\ast}(Q/G)$ endowed with the connection form $\tilde{\mathcal{A}}:=\tilde{\tau}^{\ast}\mathcal{A}$.
Given a curve $\alpha_{[q]}(t)$ in $T^{\ast}(Q/G)$, one has $q_{h}(t)$, the horizontal lift at $q$
of the curve $[q](t)=\tau_{Q/G}(\alpha_{[q]}(t))$ relative to $\mathcal{A}$.
Then the curve $(\alpha_{[q]}(t),q_{h}(t))$ lies in $\tilde{Q}$, is horizontal (relative to $\tilde{A}$) and covers $\alpha_{[q]}(t)$. Denoting horizontal lift operators by $\mathrm{hor}$,
it follows that
\begin{equation*}
\operatorname{hor}_{(\alpha_{[q]},q)}\left(v_{\alpha_{[q]}}\right) = \left( v_{\alpha_{[q]}}, \operatorname{hor}_q\left(T_{\alpha_{[q]}}\tau_{Q/G} (v_{\alpha_{[q]}})\right) \right)\in T_{(\alpha_{[q]},q)}\tilde{Q}.
\end{equation*}

Now, $S$ is an associated bundle to $\tilde{Q}$, therefore, for $s=[(\alpha_{[q]},q),\mu]$,

\begin{equation*}
\mathrm{hor}_{s}(v_{\alpha_{[q]}})
=T_{((\alpha_{[q]},q),\mu)}\rho \left(\operatorname{hor}_{(\alpha_{[q]}, q)} v_{\alpha_{[q]}}, 0 \right)\in T_{s}S,
\end{equation*}
where $\rho:\tilde{Q}\times\g^{\ast}\rightarrow S$ is the orbit projection.
Finally, for $f\in C^{\infty}(S)$ and $s\in S$ define $\mathbf{d}_{\tilde{\mathcal{A}}}^S f(s)\in T^{\ast}_{\pi^{\sharp}(s)}T^{\ast}(Q/G)$ by
\begin{equation}
\label{covariant derivative}
\mathbf{d}_{\tilde{\mathcal{A}}}^S f(s)\left(v_{\alpha_{[q]}}\right):=\mathbf{d}f(s)\left(\mathrm{hor}_{s}\left(v_{\alpha_{[q]}}\right) \right).
\end{equation}

Denote the curvature of the connection $\mathcal{A}$ by $\rm{Curv}_{\mathcal{A}}$. The reduced curvature form is a bundle map from $\wedge^2(T(Q/G))$ to the adjoint bundle, $\tilde{\mathfrak{g}}:=Q\times_{G}\mathfrak{g}$. Recall that the adjoint bundle
$\widetilde{\mathfrak{g}}$ is defined as the quotient $\widetilde{\mathfrak{g}}: = (Q
\times \mathfrak{g})/G$ relative to the diagonal left $G $-action $g \cdot (q,
\xi) := (g \cdot q, \operatorname{Ad}_g \xi)$ on $Q \times \mathfrak{g}$,
where $g \in G$,
$q \in Q$, $\xi \in \mathfrak{g}$, and $\operatorname{Ad}_g$ is the adjoint
representation of $G$ on $\mathfrak{g}$. The adjoint bundle is a Lie algebra bundle
with base $Q/G$, that is, each fiber has a Lie algebra bracket depending smoothly on
the base.
The reduced curvature is then defined by
\begin{equation}
\label{eq:reducedcurv}
\mathcal{B}([q])(u_{[q]},v_{[q]})=[q,{\rm{Curv}_{\mathcal{A}}}(u_q,v_q)],
\end{equation}
where $u_q, v_q \in T_q Q$ are arbitrary vectors satisfying $T_{q}\pi(
u_{q})=u_{[q]}$,
$T_{q}\pi( v_{q})=v_{[q]}$ respectively and $[q,\xi]\in \tilde{\mathfrak{g}}$ denotes the $G$-class through $(q,\xi)$.

The (reduced) gauge Poisson bracket is then given by the following result.
\begin{theorem}
\label{thm:Global S Bracket}
Let $s=[(\alpha_{[q]},q),\mu]\in S$ and $v=[q,\mu] \in
\widetilde{\mathfrak{g}} ^\ast$.
The Poisson bracket of $f, g \in C^{\infty}(S)$ is given by
\begin{align*}
\{f,g\}_S(s)&=\Omega_{Q/G}(\alpha_{[q]})
\left(\mathbf{d}_{\tilde{\mathcal{A}}}^S
f(s)^{\sharp},\mathbf{d}_{\tilde{\mathcal{A}}}^S g(s)^{\sharp}\right)  \\
\nonumber & \qquad +\left\langle
v,\tilde{\mathcal{B}}(\alpha_{[q]})\left(\mathbf{d}_{\tilde{\mathcal{A}}}^S
f(s)^{\sharp},\mathbf{d}_{\tilde{\mathcal{A}}}^S
g(s)^{\sharp}\right)\right\rangle  -
\left\langle \mu,\left[\frac{\delta f}{\delta s},\frac{\delta g}{\delta s}\right]
\right\rangle,
\end{align*}
where $\Omega_{Q/G} $ is the canonical symplectic form on $T ^\ast(Q/G)$,
$\tilde{\mathcal{B}}\in \Omega^{2}(T^{\ast}(Q/G);\widetilde{\mathfrak{g}})$
is the $\widetilde{\mathfrak{g}}$-valued two-form on $T^{\ast}(Q/G)$
given by
\begin{equation*}
\tilde{\mathcal{B}}=\tau_{Q/G}^{\ast}\mathcal{B},
\end{equation*}
with $\mathcal{B} \in \Omega^2(Q/G, \tilde{\mathfrak{g}})$ defined
in \eqref{eq:reducedcurv},
$\sharp:T^{\ast}(T^{\ast}(Q/G))\rightarrow T(T^{\ast}(Q/G))$ is the
vector bundle isomorphism induced by $\Omega_{Q/G}$,
 and  $\delta f/\delta s \in S ^\ast =
\tilde{Q} \times _G \mathfrak{g}$ is the usual fiber derivative of $f$ at the
point $s
\in S$, that is,
\begin{equation*}
\left\langle s', \frac{\delta f}{\delta s} \right\rangle : = {\left. \frac{d}{dt}\right |_{t=0}}f\left([(\alpha_{[q]},q),\mu+t\nu]\right)
\end{equation*}
for any $s' := [(\alpha_{[q]}, q), \nu)] \in S $.
\end{theorem}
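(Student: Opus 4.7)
My plan is to transport the bracket from $(T^\ast Q)/G$ back to $\tilde{Q}\times\gstar$ via the $G$-equivariant diffeomorphism $\varphi_{\A}$ of Proposition~\ref{prop_StoReduced}, compute the pullback symplectic form $\omega_{\A}:=\varphi_{\A}^\ast\Omega_Q$ in a frame adapted to the connection $\tA=\tilde\tau^\ast\A$, and then evaluate the resulting Poisson bracket on $G$-invariant lifts of $f,g\in C^\infty(S)$. Writing $\hat f:=f\circ\rho$ and $\hat g:=g\circ\rho$, we have
\begin{equation*}
\{f,g\}_S(s)=\{\hat f\circ\varphi_{\A}^{-1},\,\hat g\circ\varphi_{\A}^{-1}\}_{T^\ast Q}\bigl(\varphi_{\A}((\alpha_{[q]},q),\mu)\bigr),
\end{equation*}
so the task reduces to a pointwise computation of $\omega_{\A}$ at $((\alpha_{[q]},q),\mu)$ together with the identification of the horizontal and vertical derivatives of $\hat f$ in terms of the objects $\mathbf{d}_{\tA}^S f(s)$ and $\delta f/\delta s$ appearing in the statement.

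To compute $\omega_{\A}$ I would split $T_{((\alpha_{[q]},q),\mu)}(\tilde{Q}\times\gstar)$ into three blocks using $\tA$: the $\tA$-horizontal lifts of vectors $v_{\alpha_{[q]}}\in T_{\alpha_{[q]}}T^\ast(Q/G)$, the vertical directions $\xi_{\tilde{Q}}$ for $\xi\in\g$, and the fibre directions $\nu\in\gstar$. Differentiating the Sternberg formula $\varphi_{\A}((\alpha_{[q]},q),\mu)=T_q^\ast\pi(\alpha_{[q]})+\A(q)^\ast\mu$ and applying Cartan's structure equation for $\A$, one expects $\omega_{\A}$ to evaluate as the canonical term $\Omega_{Q/G}(\alpha_{[q]})(v_1,v_2)$ plus the minimal coupling term $\langle\mu,\curv(\mathrm{hor}_q T\tau_{Q/G} v_1,\mathrm{hor}_q T\tau_{Q/G} v_2)\rangle$ on pairs of horizontal lifts, the duality pairing $\langle\nu,\xi\rangle$ between a $\gstar$-direction and a vertical $\xi_{\tilde{Q}}$, and zero on the remaining mixed and pure $\gstar$ pairs. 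This is the classical Sternberg minimal coupling two-form on $\tilde{Q}\times\gstar$.

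Inverting $\omega_{\A}$ on $G$-invariant functions, the $\tA$-horizontal component of $d\hat f$ descends to $\mathbf{d}_{\tA}^S f$ on $T^\ast(Q/G)$ by the very definition \eqref{covariant derivative}, while its $\gstar$-fibre derivative equals $\delta f/\delta s$. Pairing the corresponding Hamiltonian vector fields via $\omega_{\A}$ and projecting under $\rho$ yields the three advertised terms: the horizontal--horizontal canonical block produces $\Omega_{Q/G}(\mathbf{d}_{\tA}^S f^\sharp,\mathbf{d}_{\tA}^S g^\sharp)$; the horizontal--horizontal curvature block becomes $\langle v,\tilde{\mathcal{B}}(\mathbf{d}_{\tA}^S f^\sharp,\mathbf{d}_{\tA}^S g^\sharp)\rangle$ after invoking \eqref{eq:reducedcurv}, $\tilde{\mathcal{B}}=\tau_{Q/G}^\ast\mathcal{B}$, and $v=[q,\mu]\in\wgstar$; and the vertical block, inherited from the $\gstar$-factor, produces the Lie--Poisson term $-\langle\mu,[\delta f/\delta s,\delta g/\delta s]\rangle$.

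The main obstacle is bookkeeping for the curvature coupling: one must carefully track $\curv$ through the differentiation of the connection-dependent summand $\A(q)^\ast\mu$, use $G$-equivariance of $\A$ to bring the Cartan identity into play, reduce the resulting expression consistently across the $G$-quotient from $\tilde{Q}\times\gstar$ down to $S$, and recognise it as the pullback to $T^\ast(Q/G)$ of the adjoint-bundle-valued form $\mathcal{B}$ paired with $v\in\wgstar$. Once this identification is in place, the canonical block passes formally through $T^\ast\pi$ and the sign convention for the Lie--Poisson term follows from the direction of the $G$-action on the $\gstar$-factor.
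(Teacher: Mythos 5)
The paper itself does not prove this theorem: it is recalled from the free theory with the detailed proof explicitly deferred to \cite{PeRa}. The only in-paper point of comparison is therefore the proof of the singular analogue, Theorem \ref{gauge poisson theorem}, which proceeds quite differently: there the bracket is \emph{postulated} in adapted local coordinates, checked to be a Poisson tensor (Jacobi via the Bianchi identity), and then identified a posteriori by matching its symplectic leaves with the minimal coupling forms and invoking uniqueness of the symplectic foliation. Your route --- pull back $\Omega_Q$ through $\varphi_{\A}$, compute the block structure of $\varphi_{\A}^{\ast}\Omega_Q$ in the splitting into $\tA$-horizontal lifts, vertical generators $\xi_{\tilde{Q}}$, and $\gstar$-fibre directions, and then invert on $G$-invariant lifts $\hat f=f\circ\rho$ --- is the classical direct derivation. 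It is more self-contained in that it produces the formula rather than merely verifying a guessed one, at the cost of the sign and quotient bookkeeping you acknowledge.

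There is, however, one block you must not discard, and it is the one responsible for the third term. From $\varphi_{\A}^{\ast}\Theta_Q=(\pi^{\sharp})^{\ast}\Theta_{Q/G}+\left\langle\mu,\tA\right\rangle$ one gets $\varphi_{\A}^{\ast}\Omega_Q=(\pi^{\sharp})^{\ast}\Omega_{Q/G}-\mathbf{d}\left\langle\mu,\tA\right\rangle$, and on a pair of vertical generators $(\xi_{\tilde{Q}},0)$, $(\eta_{\tilde{Q}},0)$ the second summand evaluates to $-\left\langle\mu,[\xi,\eta]\right\rangle$, since the function terms of the exterior-derivative formula vanish and $\tA([\xi_{\tilde{Q}},\eta_{\tilde{Q}}])=-[\xi,\eta]$ for a left action. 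This vertical--vertical block is \emph{not} zero, and your assertion that the form vanishes ``on the remaining mixed and pure $\gstar$ pairs'' leaves it unaccounted for (or, read literally, contradicts it); if that block truly vanished, the Lie--Poisson term would not come out with the stated coefficient. You should also make explicit how the vertical component of $X_{\hat f}$ is determined: $G$-invariance of $\hat f$ under the \emph{diagonal} action forces $\mathbf{d}\hat f(\xi_{\tilde{Q}},0)=\left\langle\mu,[\xi,\delta f/\delta s]\right\rangle$, and combining this with the duality-pairing block between $\gstar$-directions and vertical generators pins the vertical component down as $(\delta f/\delta s)_{\tilde{Q}}$, whereupon the vertical--vertical block delivers exactly $-\left\langle\mu,[\delta f/\delta s,\delta g/\delta s]\right\rangle$. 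The canonical and curvature blocks are as you describe, with the curvature arising from $-\mathbf{d}\left\langle\mu,\tA\right\rangle$ on pairs of horizontal lifts via $\curv(X,Y)=-\A([\mathrm{Hor}\,X,\mathrm{Hor}\,Y])$ and then descending to $\left\langle v,\tilde{\mathcal{B}}\right\rangle$ through \eqref{eq:reducedcurv}.
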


\subsection{Minimal coupling forms}
The symplectic leaves of the Sternberg space are given by the submanifolds
in $S$ of the form $\tilde{Q}\times_G\mathcal{O}$, where $\mathcal{O}\subset \g^{\ast}$ is the coadjoint orbit through $\mu$. To describe the symplectic forms
on these spaces we need to recall the minimal coupling form due to Sternberg \cite{St}
which is a functorial construction of a presymplectic manifold associated to a principal
bundle with a connection and a Hamiltonian $G$-space.

Let $\sigma:Z \rightarrow B $ be a left principal $G$-bundle over the symplectic manifold $(B,\Omega)$, $\mathcal{L} \in
\Omega^1(Z; \mathfrak{g})$ a connection one-form on $Z$, $(F, \omega)$ a
Hamiltonian $G$-space with equivariant momentum map $\phi: F \rightarrow
\mathfrak{g}^\ast$, and denote by $\Pi_{Z}: \tilde Z \times F \rightarrow Z $ and
$\Pi_F: Z \times F \rightarrow F $ the two projections.

It can be shown that the closed two-form
 $\mathbf{d}\left\langle \Pi_F ^\ast \phi,  \Pi_{Z} ^\ast
\mathcal{L} \right\rangle +  \Pi_F ^\ast \omega$ descends to a closed two form
$\omega^\mathcal{L} \in \Omega^2(Z \times _G F) $, that is, $\omega^\mathcal{L}
$ is characterized by the relation
\[
\rho^\ast \omega^\mathcal{L} = \mathbf{d}\left\langle \Pi_F ^\ast
\phi,  \Pi_{Z} ^\ast \mathcal{L} \right\rangle +  \Pi_F ^\ast \omega,
\]
where $\rho: {Z} \times F \rightarrow {Z} \times_G F$ is the projection to the
orbit space.

Now  denote by $\sigma_F: {Z} \times _G F \rightarrow B$ the associated
fiber bundle projection given by $\sigma_F([z, f]) : = \sigma(z)$. Then
$\sigma_F ^\ast \Omega $ is also a closed two-form on $Z \times_G F $ and one
gets the {\bfi minimal coupling\/} presymplectic form $\omega^\mathcal{L} +
\sigma_F ^\ast \Omega $. In general, this presymplectic form is degenerate, but in the
crucial case below it is in fact a reduced symplectic form.

\subsection{Symplectic leaves of the Sternberg Space}
Let us apply the previous construction to the situation  $Z=\tilde Q,\, B =
T ^\ast Q $, $\Omega = \Omega_{Q/G} = - \mathbf{d} \Theta_{Q/G} $ (the
canonical symplectic  form on the cotangent bundle $T ^\ast(Q/G) $),
$\sigma = \pi^{\sharp}: ( \alpha_{[q]}, q) \in \tilde{Q} \mapsto \alpha_{[q]}
\in  T ^\ast Q$, and $\mathcal{L}=\tilde{\mathcal{A}} $ as in the previous paragraph. Choose also $(F ,
\omega) = ( \mathcal{O}, \omega_ \mathcal{O}^-)$, where $\omega_{\mathcal{O}^-}$ is the (-) KKS form on $\mathcal{O}$ and
$\phi =
\mathbf{J}_ \mathcal{O}: \mathcal{O} \rightarrow \mathfrak{g}^\ast$ given by
$\mathbf{J}_ \mathcal{O}( \mu) = - \mu$ for any $\mu\in \mathcal{O}$ its associated $G$-equivariant momentum map for the coadjoint representation.

Then
$\pi^{\sharp}:\tilde{Q} \times _G \mathcal{O}
\rightarrow T^\ast(Q/G) $ is given by,
$\pi^{\sharp}([( \alpha_{[q]},q), \mu]) = \alpha_{[q]}$.
Denote the two form $\omega^ \mathcal{\tilde A}$ in this situation by $\tilde{\omega}_
\mathcal{O}^- $ and hence it is uniquely characterized by the relation
\begin{equation}
\label{tilde omega minus}
\rho^\ast \tilde{\omega}_\mathcal{O}^- = \mathbf{d}\left\langle\Pi_
\mathcal{O}^\ast \mathbf{J}_ \mathcal{O}, \Pi_{\tilde{Q}} ^\ast \tilde{
\mathcal{A}} \right\rangle + \Pi_ \mathcal{O} ^\ast \omega_ \mathcal{O}^- .
\end{equation}

The {\bfi minimal coupling} form in this situation is

\begin{equation}
\label{definition minimal coupling form}
\omega_{\mathrm{min}}^{\mathcal{O}}:=\tilde{\omega}_{\mathcal{O}}^{-}+(\pi^{\sharp})^{\ast}\Omega_{Q/G}.
\end{equation}

We then have the following theorem that says that the minimal coupling
form coincides with the reduced symplectic form on the leaves of the Sternberg spaces.

\begin{theorem}
\label{theorem: coupling form reduction}
The symplectic leaves of the Sternberg space $(S,\{,\}_{S})$ are given by $(\tilde{Q}\times_G\mathcal{O},\omega_{\mathrm{min}}^{\mathcal{O}})$ where $\mathcal{O}$ is a coadjoint orbit of $\mathfrak{g}^{\ast}$.
The minimal coupling two-form $\omega_{\mathrm{min}}^{\mathcal{O}}$ is the reduced symplectic form on the leaf obtained by orbit reduction, i.e. $\omega_{\mathrm{min}}^{\mathcal{O}}$ given in equation \eqref{definition minimal coupling form} is the unique two-form on $\tilde{Q}\times_{F}\mathcal{O}$ that satisfies
\begin{equation*}
\rho^{\ast}\omega_{\mathrm{min}}^{\mathcal{O}}=\iota_{\mathcal{O}}^{\ast}\Omega-\mathbf{J}_{\mathcal{O}}^{\ast}\omega_{\mathcal{O}}^{+},
\end{equation*}
where $\Omega=\varphi_{\mathcal{A}}^{\ast}\omega_{Q}$ and $\iota_{\mathcal{O}}$ is the inclusion of $\tilde{Q}\times \mathcal{O}$ into $\tilde{Q}\times \mathfrak{g}^{\ast}$.

\end{theorem}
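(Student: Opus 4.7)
The plan is to transport the classical orbit reduction theorem across the Poisson isomorphism $\Phi_{\mathcal{A}}$ of Proposition~\ref{prop_StoReduced}. Since that map intertwines the gauge bracket on $S$ with the reduced Poisson bracket on $(T^{\ast}Q)/G$ (Theorem~\ref{thm:Global S Bracket}), the symplectic leaves of $S$ are in bijection with the leaves of $(T^{\ast}Q)/G$, which orbit reduction identifies as $\mathbf{J}^{-1}(\mathcal{O})/G$ equipped with the reduced form $\omega^{\mathrm{red}}_{\mathcal{O}}$ characterized by $\rho^{\ast}_{\mathcal{O}}\omega^{\mathrm{red}}_{\mathcal{O}}=\iota^{\ast}\omega_{Q}-\mathbf{J}^{\ast}_{\mathcal{O}}\omega^{+}_{\mathcal{O}}$. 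A direct computation from the defining formula for $\varphi_{\mathcal{A}}$, using $\mathcal{A}(q)(\xi_{Q}(q))=\xi$ and the fact that $T^{\ast}_{q}\pi(\alpha_{[q]})$ annihilates vertical vectors, yields $\mathbf{J}\circ\varphi_{\mathcal{A}}((\alpha_{[q]},q),\mu)=\mu$, so that $\varphi_{\mathcal{A}}^{-1}(\mathbf{J}^{-1}(\mathcal{O}))=\tilde{Q}\times\mathcal{O}$ and $\Phi_{\mathcal{A}}$ restricts to a diffeomorphism $\tilde{Q}\times_{G}\mathcal{O}\to\mathbf{J}^{-1}(\mathcal{O})/G$. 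This identifies the leaves as claimed, endowing $\tilde{Q}\times_{G}\mathcal{O}$ with the pullback form $\omega^{\mathrm{leaf}}_{\mathcal{O}}:=\Phi^{\ast}_{\mathcal{A}}\omega^{\mathrm{red}}_{\mathcal{O}}$, which by pulling back the orbit reduction identity through $\varphi_{\mathcal{A}}$ satisfies $\rho^{\ast}\omega^{\mathrm{leaf}}_{\mathcal{O}}=\iota^{\ast}_{\mathcal{O}}\Omega-\mathbf{J}_{\mathcal{O}}^{\ast}\omega^{+}_{\mathcal{O}}$ with $\Omega=\varphi_{\mathcal{A}}^{\ast}\omega_{Q}$.

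It then suffices to verify that the minimal coupling form satisfies the same characterization, since uniqueness of the solution to the orbit reduction equation will force $\omega^{\mathcal{O}}_{\mathrm{min}}=\omega^{\mathrm{leaf}}_{\mathcal{O}}$. Expanding $\omega^{\mathcal{O}}_{\mathrm{min}}$ via \eqref{definition minimal coupling form} and \eqref{tilde omega minus}, and using $\omega^{-}_{\mathcal{O}}=-\omega^{+}_{\mathcal{O}}$ to cancel the KKS contribution $\Pi^{\ast}_{\mathcal{O}}\omega^{-}_{\mathcal{O}}$ against $\mathbf{J}_{\mathcal{O}}^{\ast}\omega^{+}_{\mathcal{O}}$ (they agree up to sign since $\mathbf{J}_{\mathcal{O}}=-\mathrm{id}_{\mathcal{O}}$), the desired identity reduces to
\begin{equation*}
\iota^{\ast}_{\mathcal{O}}\varphi_{\mathcal{A}}^{\ast}\omega_{Q}=\mathbf{d}\left\langle\Pi^{\ast}_{\mathcal{O}}\mathbf{J}_{\mathcal{O}},\Pi^{\ast}_{\tilde{Q}}\tilde{\mathcal{A}}\right\rangle+\rho^{\ast}(\pi^{\sharp})^{\ast}\Omega_{Q/G}
\end{equation*}
on $\tilde{Q}\times\mathcal{O}$. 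This is the classical connection-dependent decomposition of $\varphi_{\mathcal{A}}^{\ast}\omega_{Q}$ into a horizontal canonical piece descending from $T^{\ast}(Q/G)$ and a vertical coupling piece carrying the connection.

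The main obstacle is this last identity, which I would establish at the level of the tautological one-form and then differentiate. A direct pairing computation against tangent vectors in $T(\tilde{Q}\times\mathfrak{g}^{\ast})$, splitting vectors on $\tilde Q$ into horizontal and vertical components via $\tilde{\mathcal{A}}$ and invoking the defining relation $\pi^{\ast}\alpha_{[q]}=T^{\ast}_{q}\pi(\alpha_{[q]})$ for cotangent pullbacks, yields the primitive identity
\begin{equation*}
\varphi_{\mathcal{A}}^{\ast}\Theta_{Q}=\tilde{\tau}^{\ast}\Theta_{Q/G}+\left\langle\Pi_{\mathfrak{g}^{\ast}},\Pi^{\ast}_{\tilde{Q}}\tilde{\mathcal{A}}\right\rangle
\end{equation*}
on $\tilde{Q}\times\mathfrak{g}^{\ast}$, where $\Theta_{Q}$ and $\Theta_{Q/G}$ are the canonical one-forms. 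Applying $-\mathbf{d}$ to both sides, restricting to $\tilde{Q}\times\mathcal{O}$ via $\iota_{\mathcal{O}}$, and using $\Omega_{Q/G}=-\mathbf{d}\Theta_{Q/G}$ and $\omega_{Q}=-\mathbf{d}\Theta_{Q}$, delivers the decomposition above and thereby completes the proof by uniqueness.
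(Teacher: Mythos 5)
Your argument is correct and is essentially the approach the paper itself takes: it transports the standard orbit reduction theorem through $\Phi_{\mathcal{A}}$ and then verifies the characterizing identity from the one-form decomposition $\varphi_{\mathcal{A}}^{\ast}\Theta_{Q}=(\pi^{\sharp}\circ\Pi_{\tilde{Q}})^{\ast}\Theta_{Q/G}+\left\langle\Pi_{\mathfrak{g}^{\ast}},\Pi_{\tilde{Q}}^{\ast}\tilde{\mathcal{A}}\right\rangle$, which is precisely the free-case analogue of equation \eqref{pull back theta} in Proposition \ref{proposition phi_A} (the paper defers the free-case proof to \cite{PeRa} but proves the singular generalization in Section 5 by exactly this route). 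One small notational slip: in your primitive identity the first term should be pulled back by $\pi^{\sharp}\circ\Pi_{\tilde{Q}}$ rather than by $\tilde{\tau}$, since $\tilde{\tau}$ maps $\tilde{Q}$ to $Q$ and not to $T^{\ast}(Q/G)$; your subsequent use of $\rho^{\ast}(\pi^{\sharp})^{\ast}\Omega_{Q/G}$ shows this is what you intended.
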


\section{Homogeneous Lie-Poisson Structures}
In this section we consider our most important example of a single
orbit type manifold, a homogeneous space. Here we compute the
reduced spaces and their stratifications ``by hand''. We will
later see that the reduced symplectic and Poisson stratified
structures on homogeneous  spaces appear as the symplectic and
Poisson fibers of the Sternberg spaces developed in Sections 5 and
6.

Let $\g^*$ be equipped with the $(-)$ Lie-Poisson structure $\{\cdot,\cdot\}^{-}$
and
consider the algebra of smooth $H$-invariant functions in $\g^*$,
$C^H(\g^*)$.
Since the Lie-Poisson bracket is $H$-invariant,
$(C^H(\g^*),\{\cdot,\cdot\}^{-})$ is
a Poisson algebra. Consider also the action of $H$ on $\h^\circ$ induced
by the coadjoint action.  Denote by $(K)$ an element of the isotropy lattice for this action. We then have the following,
\begin{lemma}
\label{invariance lemma}
The Hamiltonian vector fields of functions in $C^H(\g^*)$
leave
invariant $\h^\circ$ and its orbit types $\h^\circ_{(K)}$.
\end{lemma}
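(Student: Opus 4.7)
The plan is to exploit the two defining properties of the functions $f\in C^H(\gstar)$: first, that the differential $\ed f(\mu)$ (viewed as an element of $\g$ via the identification $T^\ast_\mu\gstar\cong\g$) interacts nicely with $\h$ because of invariance; and second, that the Hamiltonian flow of $f$ is $H$-equivariant because it is the flow of an $H$-invariant vector field on $\gstar$ equipped with an $H$-invariant Poisson structure.

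For the first assertion, I would compute the Hamiltonian vector field $X_f$ on $(\gstar,\{\cdot,\cdot\}^{-})$ explicitly as $X_f(\mu)=-\ad^\ast_{\ed f(\mu)}\mu$. The task is to show that $X_f(\mu)\in\h^\circ$ whenever $\mu\in\h^\circ$, equivalently
\[
\langle \mu,[\ed f(\mu),\eta]\rangle = 0\qquad\text{for all }\eta\in\h.
\]
To prove this, I would differentiate the identity $f(\mathrm{Ad}^\ast_{\exp t\eta}\mu)=f(\mu)$, which holds by $H$-invariance, at $t=0$. The infinitesimal generator of the coadjoint $H$-action on $\gstar$ at $\mu$ is $-\ad^\ast_\eta\mu$, and the resulting identity
\[
\langle -\ad^\ast_\eta\mu, \ed f(\mu)\rangle = -\langle\mu,[\eta,\ed f(\mu)]\rangle = 0
\]
is exactly what is required. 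Since $\h^\circ$ is a linear subspace of $\gstar$ it coincides with its own tangent space at each of its points, so $X_f$ is tangent to $\h^\circ$.

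For the second assertion (invariance of each orbit type stratum $\h^\circ_{(K)}$), the plan is to first observe that because both the Lie--Poisson bracket and the function $f$ are $H$-invariant, the vector field $X_f$ is $H$-equivariant, and hence its flow $\Phi_t^f$ commutes with the coadjoint $H$-action on $\gstar$. Consequently, for any $\mu\in\h^\circ$ and any $h\in H$,
\[
h\cdot \Phi_t^f(\mu)=\Phi_t^f(h\cdot\mu),
\]
from which one immediately deduces that the $H$-stabilizer $H_{\Phi_t^f(\mu)}$ equals $H_\mu$ (the reverse inclusion following by running the flow backwards). Therefore $\Phi_t^f(\mu)$ lies in the same $H$-orbit type as $\mu$, proving that $\h^\circ_{(K)}$ is preserved by the flow.

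I expect no serious obstacle here beyond a careful sign/convention check in the Lie--Poisson Hamiltonian vector field formula; the core content is the standard interplay between equivariance of functions and of Hamiltonian flows, applied to the restricted action of $H$ on $\h^\circ$ rather than of $G$ on $\gstar$.
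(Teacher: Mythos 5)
Your proposal is correct and follows essentially the same route as the paper: the tangency of $X_f$ to $\h^\circ$ is obtained by pairing $\ad^\ast_{\ed f(\mu)}\mu$ against $\eta\in\h$ and using the $\ad$--$\ad^\ast$ duality together with the infinitesimal $H$-invariance of $f$, and the preservation of the strata $\h^\circ_{(K)}$ follows from the $H$-equivariance of the Hamiltonian flow forcing constancy of isotropy groups along flow lines. The only cosmetic difference is that you make the differentiation of $f(\operatorname{Ad}^\ast_{\exp t\eta}\mu)=f(\mu)$ explicit where the paper simply invokes $\ed f(\mu)\cdot(\ad^\ast_\lambda\mu)=0$.
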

\begin{proof}
Let $f\in C^H(\g^*)$. Then its Hamiltonian vector field evaluated at
$\mu\in\h^\circ$ is
$\ad^*_{\frac{\delta f}{\delta\mu}}\mu$. Let $\lambda\in\h$. Then
$$\langle \ad^*_{\frac{\delta
f}{\delta\mu}}\mu,\lambda\rangle=\langle-\ad^*_\lambda\mu,\frac{\delta
f}{\delta\mu}\rangle=-\ed f(\mu)\cdot (\ad^*_\lambda\mu)=0,$$
since $f$ is $H$-invariant. Thus $\ad^*_{\frac{\delta
f}{\delta\mu}}\mu\in\h^\circ=T_\mu\h^\circ$, so $\h^\circ$ is left invariant by
the
Hamiltonian flow of
$f$. Since this flow is necessarily $H$-equivariant, each of its orbits
consists
of points with the same isotropy, so the orbit types of $\h^\circ$ are
also
preserved.
\end{proof}
By the previous lemma and Lemma 2 of \cite{LeBa},
the $H$-invariant functions vanishing
on $\h^\circ$ and $\h_{(K)}^\circ$ are Poisson ideals of $C^H(\g^*)$,
denoted by
$I(\h^\circ)$ and $I(\h^\circ_{(K)})$ respectively.
Hence $C^H(\h^\circ)=C^H(\g^*)/I(\h^\circ)$ is a reduced Poisson algebra
and
$C^H(\h^\circ_{(K)})=C^H(\g^*)/I(\h^\circ_{(K)})$ a reduced Poisson
subalgebra.
Since the algebra
of smooth functions of $\h^\circ/H$ as a singular reduced space is
defined precisely as
$C^H(\h^\circ)$, we have that there is a reduced Poisson bracket on
$C^{\infty}(\h^\circ/H)$
given by
\begin{equation}
\label{homogeneous bracket}
\{f,g\}([\mu])=-\left\langle\mu,\left[\frac{\delta F}{\delta
\mu},\frac{\delta
G}{\delta\mu}\right]\right\rangle,
\end{equation}
where $[\mu]\in \h^\circ/H$, $f,g\in C^{\infty}(\h^\circ/H)=C^H(\h^\circ)$ and
$F,G$ are
smooth $H$-invariant extensions to $C^H(\g^*)$ of $f$ and $g$
respectively.
Then,
in view of Lemma \ref{invariance lemma}, and since for any orbit type $\h^\circ_{(K)}/H$
is smooth,
this reduced Poisson algebra restricts to a reduced smooth Poisson
structure on
the smooth
stratum $\h^\circ_{(K)}/H$ of $\h^\circ/H$, making it a Poisson
manifold.

In the case of a $G$-homogeneous space $M$ we can identify  $M=G/H$ where
$H$ is a compact isotropy group and we take the quotient to be for the right action of $H$ on $G$. Then $G$ acts on the left on $G/H$ according to
$g\cdot [g']=[gg']$. It is clear that the stabilizer group of the point $[g]$ is then $gHg^{-1}$
 and therefore $M=M_{(H)}$.

We next consider the problem of singular symplectic reduction for the cotangent lifted action $G\times T^{\ast}(G/H)\rightarrow T^{\ast}(G/H)$. The bulk of this paper is devoted to obtaining
gauge realizations of the singular reduced Poisson and symplectic spaces for more general single orbit type base manifolds.
 We will see that  this particular example of a homogeneous
space will appear and play a role analogous to that of a coadjoint orbit in the free case. In this sense
the symplectic reduction of the homogeneous space is precisely the correct generalization in the singular setting of a coadjoint orbit in the regular case which, recall, is obtained by the regular symplectic
reduction for the action $G\times T^{\ast}G\rightarrow T^{\ast}G$, which is the cotangent lift of the left translation of $G$ on itself.

We can carry out the reduction using the technique of commuting reduction by stages (see for example
\cite{MaMiOrPeRa}).
We consider the left action of $G\times H$ on $T^{\ast}G$
which is the cotangent lift of the action on $G$ given by
\begin{equation*}
(g,h)\cdot g'=gg'h^{-1}.
\end{equation*}
It is then clear that the restricted
actions of the two
subgroups of $G\times H$, $G$, and $H$ commute. While the total action is not free,
the restricted actions are free actions. Consider reduction at the momentum value
$(\mu,0)\in \mathfrak{g}^{\ast}\times \mathfrak{h}^{\ast}$. We first reduce by the $H$-action at zero momentum to obtain $T^{\ast}(G/H)$, equipped with the canonical symplectic form, by the regular cotangent bundle reduction theorem at zero momentum (see \cite{AbMa}). The remaining action is then given by $G \times T^{\ast}(G/H)\rightarrow T^{\ast}(G/H)$ precisely as in the starting point for the original singular reduction problem.

We next reverse the order of reduction which we are allowed to do by the singular commuting reduction theorem (\cite{SjLe}).
Reducing at $\mu\in\mathfrak{g}^{\ast}$ the left $G$-action on $T^{\ast}G$ we obtain
$\mathcal{O}$, the coadjoint orbit through $\mu$, with the usual ($-$) KKS symplectic form. We then consider the action
$H\times \mathcal{O}\rightarrow \mathcal{O}$.
As this action is the restriction of the
coadjoint action of $G$ on $\mathcal{O}$ it is a Hamiltonian action with momentum map
given by $\mathbf{J}_{H}(\mu')=-\iota^{\ast}\mu'\in \mathfrak{h}^{\ast}$
where $\iota:\mathfrak{h}\hookrightarrow \mathfrak{g}$ is the canonical inclusion and $\mu'\in \mathcal{O}$. From this expression it follows that
\begin{equation*}
\mathbf{J}_{H}^{-1}(0)=\mathcal{O}\cap \mathfrak{h}^\circ.
\end{equation*}
 We know by the Sjamaar-Lerman theory \cite{SjLe} of symplectic stratifications that the symplectic quotient $\mathbf{J}_{H}^{-1}(0)/H$ is a stratified topological space with symplectic strata given by $(\mathbf{J}_{H}^{-1}(0)_{(K)})/H$ with $(K)\leq(H)$ where the
$K$ are the stabilizer subgroups determined from the action $H\times \mathfrak{h}^\circ\rightarrow \mathfrak{h}^\circ$ (which is just the coadjoint action restricted to the subgroup $H$) and $(K)\leq (K')$ means that any representative in the conjugacy class  $(K)$ is conjugate to a subgroup of a representative in $(K')$.
Therefore we obtain that the symplectic reduced spaces, for each $\mu\in \mathfrak{g}^{\ast}$, for the action of $G$ on $T^{\ast}(G/H)$ have symplectic strata given by
\begin{equation*}
(\mathcal{O}\cap \mathfrak{h}^\circ_{(K)})/H  \qquad (K)\leq (H).
\end{equation*}
Notice that if $\mu$ is such that  $\Orbit$, the coadjoint orbit containing $\mu$, satisfies $\mathcal{O}\cap \mathfrak{h}^\circ=\varnothing$ then  $\mathbf{J}^{-1}(\mu)=\varnothing$ where $\mathbf{J}$ denotes the momentum map for
the action $G\times T^{\ast}(G/H)\rightarrow T^{\ast}(G/H)$. In other words the image of this
momentum map takes values $\mu$  that satisfy $\mathcal{O}\cap \mathfrak{h}^\circ\neq \varnothing$.

\begin{remark}
The symplectic structure on the strata, $(\mathcal{O}\cap \h^\circ_{(K)})/H$ is given by the quotient of the restriction of the symplectic form of
the coadjoint orbit. Therefore, denoting this structure by $\omega_{(K)}$ one has the following formula
\begin{equation}
\label{stratifiedKK}
\pi_{(K)}^{\ast}\omega_{(K)}=\iota_{(K)}^{\ast}\omega_{\mathcal{O}}^{-}
\end{equation}
where $\pi_{(K)}:\mathcal{O}\cap\mathfrak{h}^\circ_{(K)}\rightarrow (\mathcal{O}\cap\mathfrak{h}^\circ_{(K)})/H$,
$\iota_{(K)}:\mathcal{O}\cap\mathfrak{h}^\circ_{(K)}\hookrightarrow \mathcal{O}$
and $\omega_{\mathcal{O}}^-$ is the $(-)$ KKS symplectic structure on $\mathcal{O}$. From our construction
it follows that these symplectic strata are also the symplectic leaves of $\h^\circ_{(K)}/H$ for the smooth Poisson structure induced by the Poisson structure \eqref{homogeneous bracket} in $\h^\circ/H$. We will refer to the structures defined by \eqref{homogeneous bracket} and \eqref{stratifiedKK} by homogeneous Lie-Poisson bracket and homogeneous KKS form, repectively.
\end{remark}

\section{Singular Connections}
We wish to extend the concept of a principal connection to the
setting of a single orbit type manifold. In the rest of the paper we
will have $M=M_{(H)}$. Let $G\times M\rightarrow M$ be a proper
action. Then each $m\in M$ has isotropy group $G_{m}$ conjugate to
$H$ in $G$ with Lie algebra $\mathfrak{g}_{m}$. We first need to
generalize the target space for a connection in this singular
setting as we no longer have a fixed Lie algebra, but rather a
family of spaces $\mathfrak{g}/\mathfrak{g}_m$ for each $m\in M$
onto which the connection must project.

We denote by  $\mathfrak{m}:=\bigcup_{m\in M}\mathfrak{g}_{m}$. Because we assume the action is proper, we can prove that this space is a vector bundle as follows.
\begin{proposition}
The set $\mathfrak{m}$ is a smooth vector bundle over $M$. We call it the stabilizer
bundle over $M$.
\end{proposition}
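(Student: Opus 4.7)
The plan is to use the tube theorem to reduce the question to a local model around each orbit, and then recognize the restriction of $\mathfrak{m}$ to each tube as a smooth associated bundle. The two essential inputs will be properness (for the existence of tubes) and the single orbit type hypothesis $M=M_{(H)}$ (to pin down the rank and to trivialize the slice action).

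First I would fix $m_0\in M$ with $G_{m_0}=H$ and invoke Palais' tube theorem to obtain a $G$-invariant open neighborhood $U\subset M$ of $G\cdot m_0$ together with a $G$-equivariant diffeomorphism $\Psi:G\times_H S\xrightarrow{\sim} U$, where $S$ is a slice at $m_0$ and $H$ acts linearly on $S$. The main obstacle is to show that the single orbit type condition forces the slice action of $H$ on $S$ to be trivial: for any $s\in S$ the isotropy $G_s$ is contained in $H=G_{m_0}$ and is $G$-conjugate to $H$, and the claim is that any subgroup $K\subseteq H$ of a compact Lie group that is $G$-conjugate to $H$ must equal $H$. This should follow from a dimension argument to show $K^0=H^0$ and a counting argument on connected components ($K$ and $H$ have the same finite number of $H^0$-cosets, so the inclusion $K\subseteq H$ forces equality). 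Consequently $G\times_H S=(G/H)\times S$, and the isotropy at $\Psi([g,s])$ is exactly $gHg^{-1}$, with Lie algebra $\operatorname{Ad}_g(\mathfrak{h})\subset\mathfrak{g}$. In particular the fibers $\mathfrak{g}_m$ all have the same dimension $\dim\mathfrak{h}$, which is a necessary condition for a vector bundle structure.

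Next I would construct explicit local trivializations, viewing $\mathfrak{m}$ as a subset of the trivial bundle $M\times\mathfrak{g}$. On a neighborhood $V\subset G/H$ of $eH$ I choose a smooth local section $\sigma:V\to G$ of the principal bundle $G\to G/H$, and on $U'=\Psi(V\times S)$ I define
\begin{equation*}
\Phi:V\times S\times\mathfrak{h}\longrightarrow \mathfrak{m}|_{U'},\qquad (gH,s,\xi)\longmapsto\bigl(\sigma(gH)\cdot s,\;\operatorname{Ad}_{\sigma(gH)}\xi\bigr).
\end{equation*}
By the previous step the image lies in $\mathfrak{m}$, $\Phi$ is linear in the $\mathfrak{h}$ factor, and the inverse $(m,\eta)\mapsto(gH,s,\operatorname{Ad}_{\sigma(gH)^{-1}}\eta)$, with $(gH,s)$ read off from $\Psi^{-1}(m)$, is smooth as well. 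Translating $\Phi$ by elements of $G$ covers the whole tube $U$, and letting $m_0$ vary over $M$ covers all of $M$. The transition functions between two such trivializations are built from $\sigma$, $\operatorname{Ad}$ and the diffeomorphisms $\Psi$, so they are automatically smooth. These local trivializations thus equip $\mathfrak{m}$ with the structure of a smooth vector subbundle of $M\times\mathfrak{g}$ of rank $\dim\mathfrak{h}$, which is the desired conclusion. The only nontrivial verification is the slice-triviality claim in the second paragraph; once that is in place the rest is a direct unwinding of the associated bundle $G\times_H\mathfrak{h}\to G/H$ coupled with the slice factor $S$.
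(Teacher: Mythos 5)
Your proposal is correct and follows essentially the same route as the paper: a tube around each orbit, a local section of $G\rightarrow G/H$ (the paper builds one concretely via the Riemannian exponential $\exp_e$ on $\mathfrak{k}=\mathfrak{h}^{\perp}$, you take an abstract one), and the trivialization $(\,\cdot\,,\xi)\mapsto(\,\cdot\,,\operatorname{Ad}_{\sigma(gH)}\xi)$ conjugating the fixed algebra $\mathfrak{h}$. The one point you treat more explicitly than the paper --- that the single orbit type hypothesis forces $G_s=H$ for slice points, via the fact that a compact subgroup $K\subseteq H$ conjugate to $H$ in $G$ equals $H$ --- is exactly the fact the paper's trivialization $f$ silently relies on, so this is a welcome addition rather than a deviation.
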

\begin{proof}
This is a simple application of the tube theorem. We must show that
$\mathfrak{m}$ is locally
trivializable. Fix $m\in M$. Without loss of generality we can assume that
$G_m=H$. Let
$S_m=T_{m}(G\cdot m)^{\perp}$ with respect to some $G$-invariant metric
on $M$ (available by the properness of the action). Then $S_m$
is then a linear slice for the $G$-action at $m$ and $\exp_m$ is an
$H$-equivariant diffeomorphism
from an open ball $B$ containing the origin in $S_m$ to an $H$-invariant
submanifold transverse
to $G\cdot m$ at $m$. We have that $\phi: G\times_{H}B\rightarrow U$ given by
 \begin{equation*}
\phi([g,v])= g\cdot \exp_m v.
\end{equation*}
is a $G$-equivariant local diffeomorphism onto a
$G$-invariant neighborhood of the
orbit $G\cdot m$. Note that $\phi([e,0])=x$. Choose now a $H$-invariant Riemannian metric on $G$
and split $\g=T_eG$ as
$\g=\h\oplus\mathfrak{k}$. Then $\mathfrak{k}:=\h^{\perp}$ is a slice at the identity for
the free $H$-action on $G$, and therefore we can use again the tube theorem
to construct a local $H$-diffeomorphism $H\times O\rightarrow G$ onto a
neighborhood of $H$ in $G$. Here $O$ is a sufficiently small open ball around $0$ in
$\mathfrak{k}$.
This diffeomorphism is explicitly given by $(h,k)\mapsto h\cdot\exp_e k$,
where $\exp_e$ is the associated Riemannian exponential on $G$, for which $O$ lies inside its
domain of injectivity. This diffeomorphism drops to a
diffeomorphism $(H\times O)/H=O\rightarrow O'\subset G/H$ where $O'$ is a neighborhood in $G/H$ containing $[e]$.
We can therefore identify each element $[g]$
of $O'$
with $\exp_e k$ for a unique $k\in O$. Shrink $O'$ if necessary so
that it becomes a trivializing
neighborhood for the associated bundle $G\times_H B$ over $O'$ and call the
induced trivial bundle chart $\Psi:O'\times B\rightarrow U\subset M$. This map is given by $\Psi(\exp_{e}k,v)= \exp_{e}k\cdot \exp_{m}v$. Thus $U\subset M$ is a (not  invariant)
neighborhood of $x$. Finally, we can construct a trivialization of
$\mathfrak{m}$ over $U$, as follows: $f:U\times \h\rightarrow \mathfrak{m}_U$
is
given by
$$f(\exp_e k\cdot \exp_{m}s,\xi):=(\exp_ek\cdot\exp_m s,\mathrm{Ad}_{\exp_ek}\xi).$$
\end{proof}

Next, we form the bundle that will play the role of the Lie algebra in the standard
theory of connections on principal bundles. The fibers of this bundle, over each point
in $M$, should be isomorphic to the tangent space of the group orbit at that point.
The natural candidate for this fiber, over $m\in M$ is simply $\mathfrak{g}/\mathfrak{g}_m$. Consider the trivial bundle over $M$, $M\times \mathfrak{g}$. We then have the following injective
inclusion of vector bundles over $M$ covering the identity on the base
\begin{equation*}
\mathfrak{m}\hookrightarrow M\times\mathfrak{g}.
\end{equation*}
\begin{definition}
Let $\nu$ be the quotient bundle defined by,
\begin{equation*}
\nu:=\frac{M\times \mathfrak{g}}{\mathfrak{m}}.
\end{equation*}
\end{definition}
Note that the fiber over $m\in M$ is simply
\begin{equation*}
\nu_m=\mathfrak{g}/\mathfrak{g}_m.
\end{equation*}

We then have the following properties of the vector bundle $\nu$.

\begin{proposition}
\label{properties of nu}
The vector bundle $\nu$ satisfies:
\begin{enumerate}
\item $\nu|_{M_H}=M_H\times \mathfrak{g}/\mathfrak{h}$, i.e. the restriction of $\nu$ to
the constant stabilizer manifold $M_H$ is a trivial bundle.
\item There is a smooth action of the group $G$ on $\nu$ which is linear
on the fibers and covers the $G$-action on $M$. This action is defined by
\begin{equation}
\label{equation nu action}
g\cdot [\xi]_m=[\operatorname{Ad}_{g}\xi]_{g\cdot m}.
\end{equation}
\item With respect to this action, $\nu$ is a saturated vector bundle, i.e.
\begin{equation*}
\nu=G\cdot \nu|_{M_H}.
\end{equation*}

\end{enumerate}

\end{proposition}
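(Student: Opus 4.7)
The plan is to verify the three assertions separately, leveraging the explicit description of $\nu$ as a fiberwise quotient with $\nu_m=\mathfrak{g}/\mathfrak{g}_m$ together with the standard equivariance relation $\mathfrak{g}_{g\cdot m}=\operatorname{Ad}_g\mathfrak{g}_m$, which is an immediate consequence of differentiating the identity $G_{g\cdot m}=gG_mg^{-1}$.

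For item (1), the defining property of $M_H$ is that every point has stabilizer \emph{equal to} $H$, hence $\mathfrak{g}_m=\mathfrak{h}$ for every $m\in M_H$. Therefore the stabilizer bundle restricts to the trivial subbundle $\mathfrak{m}|_{M_H}=M_H\times\mathfrak{h}$ of $M_H\times\mathfrak{g}$, and passing to the fiberwise quotient yields $\nu|_{M_H}=M_H\times(\mathfrak{g}/\mathfrak{h})$ as claimed.

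For item (2), I first check that \eqref{equation nu action} is independent of the representative: if $\xi-\xi'\in\mathfrak{g}_m$, then $\operatorname{Ad}_g(\xi-\xi')\in\operatorname{Ad}_g\mathfrak{g}_m=\mathfrak{g}_{g\cdot m}$, so $[\operatorname{Ad}_g\xi]_{g\cdot m}=[\operatorname{Ad}_g\xi']_{g\cdot m}$. The axioms of a group action on $\nu$ follow at once from those of the adjoint representation together with the given action on $M$, and linearity on each fiber is inherited from the linearity of $\operatorname{Ad}_g$ and of the quotient map. Smoothness follows from the smoothness of the diagonal action $g\cdot(m,\xi)=(g\cdot m,\operatorname{Ad}_g\xi)$ on the trivial bundle $M\times\mathfrak{g}$, together with the fact that the canonical projection $M\times\mathfrak{g}\to\nu$ is a smooth surjective submersion; this in turn is a standard consequence of the local trivializations of $\mathfrak{m}$ constructed in the preceding proposition, which identify $\mathfrak{m}$ locally as a smooth vector subbundle of $M\times\mathfrak{g}$ and therefore make $\nu$ into a smooth quotient vector bundle.

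For item (3), I use that every $G$-orbit in $M$ meets $M_H$, so for any $m\in M$ one can write $m=g\cdot m_0$ with $g\in G$ and $m_0\in M_H$. Given any $[\xi]_m\in\nu_m$, set $\eta:=\operatorname{Ad}_{g^{-1}}\xi$; then $[\eta]_{m_0}\in\nu|_{M_H}$ and $g\cdot[\eta]_{m_0}=[\operatorname{Ad}_g\eta]_m=[\xi]_m$, establishing saturation. The only point that requires care in the whole argument is the smoothness of the $G$-action, but this reduces cleanly to the smoothness of the quotient bundle $\nu$ already ensured by the previous proposition, so no substantive additional computation is needed.
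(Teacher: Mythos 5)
Your proposal is correct and follows essentially the same route as the paper: item (1) via triviality of $\mathfrak{m}|_{M_H}=M_H\times\mathfrak{h}$, item (2) via the representative-independence check using $\operatorname{Ad}_g\mathfrak{g}_m=\mathfrak{g}_{g\cdot m}$, and item (3) via $M=G\cdot M_H$. Your added remarks on smoothness of the action (descending from the diagonal action on $M\times\mathfrak{g}$ through the quotient submersion) are a welcome bit of extra care that the paper's proof leaves implicit.
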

\begin{proof}
For $(1)$, because the isotropy algebra for any point $m\in M_{H}$ is just $\mathfrak{h}$, the stabilizer bundle $\mathfrak{m}$ restricted to $M_{H}$ is the trivial bundle,
$
\mathfrak{m}|_{M_{H}}=M_{H}\times \mathfrak{h},
$
and therefore $$\nu|_{M_{H}}=\frac{M_{H}\times \mathfrak{g}}{\mathfrak{m}|_{M_H}}=M_{H}\times \mathfrak{g}/\mathfrak{h}.$$
For $(2)$, it is clear that if equation \eqref{equation nu action} is well defined, then it is an action.
To see that it is well defined, choose another representative $\xi'$ of the class $[\xi]_{m}$, so that
$\xi'=\xi+\lambda$ where $\lambda\in \mathfrak{g}_{m}$. Then $\operatorname{Ad}_{g}\xi'=\operatorname{Ad}_{g}\xi+\operatorname{Ad}_{g}\lambda$. But,
$\operatorname{Ad}_{g}\lambda\in \mathfrak{g}_{g\cdot m}$ and therefore $[\operatorname{Ad}_{g}\xi']_{g\cdot m}=[\operatorname{Ad}_{g}\xi]_{g\cdot m}$ as required.
To prove $(3)$, it is clear that since $M=G\cdot M_{H}$, any point $m'\in M$ can be written as
$m'=g\cdot m$ for some $m\in M_{H}$ and therefore since $\mathfrak{g}_{m'}=\operatorname{Ad}_{g}\mathfrak{h}$ it follows that $g\cdot \nu_{m}=\nu_{m'}$.
\end{proof}

\begin{remark}
When the action is free, $\mathfrak{h}=0$ and therefore the stabilizer bundle $\mathfrak{m}$ is $0$ so that $\nu=M\times \mathfrak{g}$ and therefore its quotient $\nu/G$ is the adjoint bundle $\tilde{\mathfrak{g}}=\frac{M\times \mathfrak{g}}{G}$.
\end{remark}

We next define the main object of this section, a singular connection for single orbit type manifolds.

\begin{definition}
A singular connection $\mathcal{A}$ for the single orbit type manifold $M$
is a smooth surjective vector bundle map $\mathcal{A}:TM\rightarrow \nu$, covering the identity,
with the properties:
\begin{itemize}
\item[(i)] $\mathcal{A}$ is $G$-equivariant: $\mathcal{A}(g\cdot v_{m})=g\cdot\mathcal{A}(v_m)$ for any $v_m\in TM$.

\item[(ii)] For all $\xi\in \mathfrak{g}$, $\mathcal{A}(\xi_M(m))=[\xi]_m.$

\end{itemize}
\end{definition}
Consequently for each $m\in M$, $\mathrm{ker}\mathcal{A}(m)$ is a complement to $\mathfrak{g}\cdot m\simeq \nu_m$ in $T_mM$ and together forms a $G$-invariant subbundle $H(M)$ of $TM$. Such connections always exist with our assumptions of a proper action, since it is well known (see \cite{DuiKol}) that, assuming the action is proper, there exists a $G$-invariant Riemannian metric on $M$. Using this metric, we can simply declare the horizontal space at a point $m\in M$ to be $H_m:=(\g\cdot m)^{\perp}$. It is clear that these spaces form a subbundle of $TM$ invariant under the $G$-action, and satisfy $TM=H(M)\oplus  V(M)\simeq H(M)\oplus \nu$.

\subsection{Stratification of $TM$}
In this section we use the singular connection to determine the orbit type stratification for the
tangent lifted action $G\times TM\rightarrow TM$. We will obtain an analogous result
when we dualize the action for the cotangent bundle. In studying the strata of the tangent
lifted action we can use the connection to reduce the problem to studying the
strata for the $G$-action on $\nu$. We obtain the following result.

\begin{theorem}
\label{lattice theorem}
The isotropy lattice for the action of $G$ on $TM$ is in one-to-one correspondence
with the lattice determined by the $\operatorname{Ad}$-induced action
\begin{equation*}
H\times \mathfrak{g}/\mathfrak{h}\rightarrow \mathfrak{g}/\mathfrak{h}.
\end{equation*}
Let $\mathcal{A}$ be a singular connection on $M$. Then we have a connection dependent $G$-equivariant diffeomorphism $\varphi_\mathcal{A}:TM\rightarrow H(M)\times\nu$ such that,  for each $(K)\leq (H)$ in the previous isotropy lattice it restricts to an equivariant diffeomorphism
\begin{equation*}
\varphi_\mathcal{A}|_{(TM)_{(K)}}:(TM)_{(K)}\rightarrow H(M)\times \nu_{(K)},
\end{equation*}
where $H(M)$ is the horizontal subbundle in $TM$ determined by $\mathrm{Ker}\mathcal{A}$ and $(K)$ refers to the conjugacy class of $K$ in $G$.

Since $\mathcal{A}$ is $G$-equivariant, then $\mathcal{A}:TM\rightarrow\nu$  is  a stratified morphism respecting the orbit type strata of $TM$ and $\nu$.
Also,
$\nu_{(K)}/G$ is a smooth fiber bundle over $M/G$ with typical fiber isomorphic to $(\mathfrak{g/h})_{(K)}/H$.
Finally,
$(TM)/G$ has the structure of a stratified vector bundle over $M/G$ with smooth strata $(TM)_{(K)}/G$ isomorphic to
\begin{equation*}
T(M/G)\times \nu_{(K)}/G.
\end{equation*}
\end{theorem}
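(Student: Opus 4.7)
The plan is to leverage the $G$-equivariant splitting $TM = H(M)\oplus V(M)$ afforded by the singular connection, together with the canonical identification $V(M)\cong\nu$ given by $\xi_M(m)\mapsto[\xi]_m$ (well-defined since $\mathfrak{g}_m=\ker(\xi\mapsto\xi_M(m))$), in order to reduce the orbit-type analysis of $TM$ to that of $\nu$. The crucial preliminary observation is that, in the single orbit type setting, for any $m\in M_H$ the isotropy group $H=G_m$ acts trivially on the horizontal fiber $H_m=\ker\mathcal{A}(m)$. To see this, fix a $G$-invariant Riemannian metric and take the associated slice $S_m=T_m(G\cdot m)^\perp$. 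Since $M=M_{(H)}$ and $H$ is compact, every point of $S_m$ near $0$ has isotropy group that is both a subgroup of $H$ and conjugate to $H$ in $G$, and hence equal to $H$; consequently $H$ fixes $S_m$ pointwise and therefore acts trivially on $T_mS_m$. Because the $H$-action on $T_mM/V_m$ is intrinsically defined and any two $H$-invariant complements to $V_m$ are $H$-equivariantly isomorphic to this quotient, $H$ acts trivially on every such complement, and in particular on $H_m$.

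With this at hand I define $\varphi_\mathcal{A}(v_m):=(\mathrm{hor}(v_m),\mathcal{A}(v_m))$, where $\mathrm{hor}$ denotes the projection onto $H(M)$ parallel to $V(M)$; its inverse is $(w_m,[\xi]_m)\mapsto w_m+\xi_M(m)$, and both maps are smooth and $G$-equivariant. For $v_m=v^H+v^V\in T_mM$ with $m\in M_H$, the triviality of the $H$-action on $H_m$ yields $G_{v_m}=G_m\cap G_{v^V}=H_{v^V}$, so the isotropy of $v_m$ in $G$ coincides with the $H$-isotropy of $\mathcal{A}(v_m)\in\nu_m\cong\mathfrak{g}/\mathfrak{h}$ under the adjoint-induced action. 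This immediately yields the claimed bijection of isotropy lattices and the fact that $\varphi_\mathcal{A}$ restricts to a $G$-equivariant diffeomorphism from $(TM)_{(K)}$ onto the fiber product $H(M)\times_M\nu_{(K)}$ on each stratum. Since $\mathcal{A}$ is the composition of $\varphi_\mathcal{A}$ with the projection to $\nu$, it is automatically a stratified morphism between the corresponding orbit type stratifications.

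For the bundle structure of $\nu_{(K)}/G$ over $M/G$ I invoke Proposition \ref{properties of nu}: part (1) supplies the trivialization $\nu|_{M_H}=M_H\times\mathfrak{g}/\mathfrak{h}$ and part (3) reduces the $G$-quotient to the $N(H)$-quotient of $\nu|_{M_H}$. Performing this quotient in two stages --- first by the normal subgroup $H$, which is trivial on $M_H$ and acts by the adjoint-induced representation on $\mathfrak{g}/\mathfrak{h}$, and then by the free action of $N(H)/H$ --- realizes $\nu_{(K)}/G$ as the associated bundle $M_H\times_{N(H)/H}\bigl((\mathfrak{g}/\mathfrak{h})_{(K)}/H\bigr)$ over $M_H/(N(H)/H)=M/G$, with typical fiber $(\mathfrak{g}/\mathfrak{h})_{(K)}/H$ as required. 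For the final stratified vector bundle statement, the triviality of the $H$-action on $H_m$ established above makes $T_m\pi:H_m\to T_{[m]}(M/G)$ an $H$-equivariant isomorphism which, together with the free $N(H)/H$-action on $M_H$, descends to an identification $H(M)/G\cong T(M/G)$; combining this with the preceding bundle description and the equivariant splitting $\varphi_\mathcal{A}$ gives $(TM)_{(K)}/G\cong T(M/G)\times\nu_{(K)}/G$ as fiber products over $M/G$. The main obstacle throughout is the initial step, the triviality of the isotropy action on the horizontal complement, as this is the precise place where the single orbit type hypothesis $M=M_{(H)}$ is indispensable; without it the horizontal distribution would contribute its own nontrivial isotropy representation and the stratification of $TM$ and its quotient would be substantially more intricate.
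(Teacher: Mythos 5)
Your proposal is correct and follows essentially the same route as the paper: split $TM\cong H(M)\oplus\nu$ via the connection, use the single-orbit-type slice argument to show $H$ acts trivially on the horizontal fiber over $M_H$, reduce the isotropy analysis to the $\operatorname{Ad}$-induced $H$-action on $\mathfrak{g}/\mathfrak{h}$, and obtain the quotient bundle structures by saturation over $M_H$. The only (harmless) variations are that you establish the triviality of the $H$-action on $H_m$ by comparing invariant complements to $T_mM/V_m$ rather than identifying $H_m$ directly with the slice, and you make the associated-bundle description $M_H\times_{N(H)/H}\bigl((\mathfrak{g}/\mathfrak{h})_{(K)}/H\bigr)$ of $\nu_{(K)}/G$ slightly more explicit than the paper does.
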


\begin{proof}
First notice that the connection establishes a $G$-equivariant bundle
isomorphism $\varphi_{\mathcal{A}}:TM\rightarrow H(M)\oplus \nu$ given by
$v_{m}\mapsto (\mathrm{Hor}\,v_{m},\mathcal{A}(v_{m}))$ where $\mathrm{Hor}$ is the projection
onto $H(M)$ given by $\mathrm{Hor}\,v_{m}=v_{m}-(\mathcal{A}(v_m))_M(m)$. The inverse
of this map is then simply $(v_m,[\xi]_{m})\mapsto v_m+\xi_M(m)$ which
is clearly well defined. Next, observe that each of these bundles is
a saturated bundle over $M_H$. That is, we have
\begin{equation*}
TM=G\cdot TM|_{M_H}\simeq G\cdot \left(H(M)|_{M_H}\oplus \nu|_{M_H}\right)
\end{equation*}
since $G$ acts on $TM$ by tangent lifts and the base action satisfies $G\cdot M_{H}=M$.
Because of these saturations, it is enough to study the strata of the fiber
over some $m\in M_H$
since the strata over any other fiber will be $g$-translations. So, we first fix $m\in M_H$ and
consider the diagonal $H$-action on $H(M)_m\oplus \nu_m$.
Notice that $H(M)_m$ is isomorphic to a linear slice for the $G$-action on $M$ at the point $m$.
Since $M$ is a single orbit type manifold the $H$-action on the linear slice can only
have one orbit type, and therefore the entire space must be fixed by $H$ since
$H$ fixes $0$. Therefore, since the $H$-action is diagonal, we have reduced the study of the stratification of $TM$ to the study
of the strata of the $H$-action on $\nu_m$. Recall that $\nu|_{M_H}=M_H\times \mathfrak{g}/\mathfrak{h}$
and the action is given by, from equation \eqref{equation nu action}, $h\cdot [\xi]=[\operatorname{Ad}_h\xi]$ since $H$ fixes the base point. It follows that if
we denote by $(K)$, ($(K)\leq (H)$), the elements of the isotropy lattice for the action $H\times \mathfrak{g}/\mathfrak{h}\rightarrow \mathfrak{g}/\mathfrak{h}$, then this lattice is in one-to-one
correspondence with the isotropy lattice for the $G$-action on $H(M)\oplus \nu$. Furthermore, since $\varphi_{\mathcal{A}}:TM\rightarrow H(M)\oplus \nu$ is a $G$-equivariant bundle isomorphism, the isotropy lattice for the $G$-action on $TM$ is identical to that of $H(M)\oplus\nu$ and $\varphi_{\mathcal{A}}$ restricts to a smooth isomorphism,
\begin{equation*}
(TM)_{(K)}\simeq H(M)\times \nu_{(K)}.
\end{equation*}
From this isomorphism, it is now clear that the map $\mathcal{A}:TM\rightarrow \nu$ is a stratified morphism mapping each orbit type stratum $(TM)_{(K)}$ onto the orbit type stratum $\nu_{(K)}$ and covering the identity map on the base.

Denote the quotient map for the $G$-action on $M$ by $\pi:M\rightarrow M/G$. Of course $M/G$ is a smooth manifold since $M$ is a single orbit type manifold, and is in fact diffeomorphic to the orbit space for the free and proper action of $N(H)/H$ on $M_H$. Consider the restriction of the $G$-action to the subbundle $H(M)$. The quotient by this action is a manifold since $H(M)$ has just one orbit type. Furthermore, the quotient is a bundle over $M/G$ since the action covers the action of $G$ on $M$.
The fiber of this bundle over a point $[m]\in M/G$ with $m\in M_H$ is $H_m(M)/H\simeq T_mH(M)$ since the $H$-action fixes every point in $H_m(M)$ (by the argument given earlier). Therefore,  the isomorphism $T_{m}\pi:H_m(M)\rightarrow T_{[m]}(M/G)$ (since $\mathrm{Ker}\ T_{m}\pi\simeq\nu_m$)  induces a bundle isomorphism $H(M)/G\simeq T(M/G)$.
Similarly the bundle $\nu_{(K)}$ is a single orbit type space and its quotient is then a smooth manifold which is also a fiber bundle over $M/G$. The fiber of this bundle over a point $[m]\in M/G$ such that $m\in M_H$ is just the quotient by the $H$-action on $(\nu_{(K)})_m$ which is $(\mathfrak{g}/\mathfrak{h})_{(K)}/H$.

Next, since the $G$-action on the direct product bundle $H(M)\times \nu_{(K)}$ is diagonal, the product bundle has fixed orbit type which is $(K)$ and the quotient is therefore a manifold. Furthermore, since the action is by lifts, this quotient is a bundle over $M/G$ and its fiber over a point $[m]\in M/G$ with $m\in M_H$ is given by the quotient of the $H$-action on the fiber $H_m(M)\times_m \nu_{(K)}$. Since $H$ acts diagonally and fixes $H_m(M)$, the quotient is just $H_m(M)\times_m (\mathfrak{g}/\mathfrak{h})_{(K)}/H\simeq T_{[m]}(M/G)\times_m (\nu_{(K)}/G)_m$. It follows that the quotient, $(H(M)\times \nu_{(K)})/G$ is isomorphic to the direct product bundle over $M/G$ given by  $T(M/G)\times \nu_{(K)}/G$.

\end{proof}

\subsection{Singular Atiyah Sequence}
It is useful to describe a singular version of the standard Atiyah
sequence for a principal bundle. In this singular case, i.e. of a single orbit type manifold,
the analogous sequence is no longer a sequence of vector bundles over
the quotient space, but rather a sequence of stratified vector bundles and each
arrow corresponds to a stratified morphism. The singular connection establishes
a splitting of the sequence of vector bundles over $M$,
\begin{equation}
\label{short_exact}
0\rightarrow \nu \rightarrow TM\xrightarrow{\varphi_\mathcal{A}} H(M)\oplus\nu \xrightarrow {\mathrm{pr}} \pi^{\ast}T(M/G)\rightarrow 0
\end{equation}

where $\pi^{\ast}T(M/G)$ is the pull back bundle of $M\rightarrow M/G$ with respect to
the tangent projection $\tau_{M/G}:T(M/G)\rightarrow M/G$. That is,
\begin{equation*}
\pi^{\ast}T(M/G)=\left\{(m,v_{[m]})\, : \, \pi(m)=\tau_{M/G}(v_{[m]})\right\},
\end{equation*}
and $\mathrm{pr}(v_m)=(m,T_{m}\pi(v_m))$. Notice that
$\pi^{\ast}T(M/G)$ is a fiber bundle over $M$ with fiber over $m\in
M$, $T_{[m]}(M/G)$, and a fiber bundle over $T(M/G)$ with fiber over
$v_{[m]}$, the orbit $G\cdot m$ where $\pi(m)=\tau_{M/G}(v_{[m]})$.
Furthermore $\pi^{\ast}T(M/G)$ carries a proper $G$-action given by
$g\cdot (m,v_{[m]})=(g\cdot m,v_{[m]})$. It is easy to see that this
space has only a single orbit type, $(H)$, identical to the one for
the action of $G$ on $M$. Now, as in the regular case, the singular
connection $\mathcal{A}$ determines an injective horizontal lift map
for each $m\in M$, $\mathrm{hor}_{m}:T_{[m]}(M/G)\rightarrow T_{m}M$
which takes values in $H(M)_{m}$. Consequently there is an injective
bundle map $\pi^{\ast}T(M/G)\rightarrow TM$ which takes values in
$H(M)$ and establishes a $G$-equivariant isomorphism of bundles
$\pi^{\ast}T(M/G)\simeq H(M)$ given by $(m,v_{[m]})\mapsto
\mathrm{hor}_{m}(v_{[m]})$. Notice that the splitting of the
sequence is identical for all the strata and this fact depends
crucially on the property that the stratification of $TM$ is along
the vertical part of the Whitney sum in \eqref{short_exact}, the
bundle $\nu$. Each stratified sequence, with smooth morphisms is
just
\begin{equation*}
0\rightarrow \nu_{(K)}\rightarrow (TM)_{(K)}\xrightarrow{\varphi_\mathcal{A}} H(M)\times \nu_{(K)}\rightarrow \pi^{\ast}T(M/G)\rightarrow 0.
\end{equation*}

Following the construction in the regular case, we consider the quotient of the stratified sequence \eqref{short_exact} to obtain a stratified split Atiyah sequence,
\begin{equation*}
0\rightarrow \nu/G\rightarrow (TM)/G\simeq T(M/G)\times \nu/G\rightarrow T(M/G)\rightarrow 0
\end{equation*}
with smooth strata and morphisms given by
\begin{equation*}
0\rightarrow \nu_{(K)}/G\rightarrow (TM)_{(K)}/G\simeq T(M/G)\times\nu_{(K)}/G\rightarrow T(M/G)\rightarrow 0
\end{equation*}

\subsection{Curvature}
Recall that the usual theory of curvature begins with the definition of the covariant differential of a connection
$$D\mathcal{A}(u,v):=\mathbf{d}\mathcal{A}(\mathrm{Hor}\,u,\mathrm{Hor}\,v)$$ often denoted $B=DA$
and subsequent proof that this is a tensor, and that it verifies the identity
$$B(X,Y)=-\mathcal{A}([\mathrm{Hor}X,\mathrm{Hor}Y])$$ for vector fields $X$ and $Y$. This last identity gives the curvature the interpretation as the measure of non-integrability of the horizontal distribution of the connection $\mathcal{A}$. In our case the singular connection is a bundle map from $TM$ to $\nu$. Rather than define a covariant differential for this object, we make the following definition of its curvature form. In Remark \ref{ehresmann approach} we take an alternative approach of defining the covariant differential by realizing the singular connection as an Ehresmann connection.

\begin{definition}
\label{definition curvature}
The curvature of $\mathcal{A}:TM\rightarrow \nu$ is defined to be
\begin{equation}
\label{curvature}
B(u_m,v_m):=-\mathcal{A}\left([\mathrm{Hor}X,\mathrm{Hor}Y]\right)(m).
\end{equation}
where $X$ is a vector field extending $u_m$ and $Y$ is a vector field extending $v_m$. Recall that $\mathrm{Hor}:TM\rightarrow H(M)$ is the projection relative to the singular connection.
\end{definition}

\begin{proposition}
\label{proposition curvature}
The curvature $B$ given in the previous definition is well defined.
Also,
$B$ is a $G$-equivariant bundle map
$
B:\wedge^2TM\rightarrow \nu
$
and it takes values in the stratum of $\nu$ containing the zero section, $\nu_{(H)}$.
Furthermore, $B$ uniquely determines a reduced curvature form $\mathcal{B}:\wedge^2T(M/G)\rightarrow \nu_{(H)}/G$ which is a bundle map covering the identity in $M/G$.
\end{proposition}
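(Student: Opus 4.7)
The plan is to prove the four claims in sequence---tensoriality, $G$-equivariance, the stratum statement, and the descent---adapting standard principal bundle arguments to the bundle-valued setting. The first two items are routine transfers from the principal bundle case; the main new content is the stratum statement.

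First I would check that $B$ is a well-defined $G$-equivariant bundle map $\wedge^2 TM\to\nu$. Tensoriality in each argument uses the standard identity
$$[\mathrm{Hor}(fX),\mathrm{Hor}Y]=f[\mathrm{Hor}X,\mathrm{Hor}Y]-(\mathrm{Hor}Y)(f)\,\mathrm{Hor}X,$$
with the second term killed by $\mathcal{A}$ since $\mathrm{Hor}X\in H(M)=\ker\mathcal{A}$; antisymmetry is immediate from the Lie bracket. In particular $B(u_m,v_m)$ depends only on $\mathrm{Hor}\,u_m$ and $\mathrm{Hor}\,v_m$. For $G$-equivariance, the horizontal subbundle $H(M)=\ker\mathcal{A}$ is $G$-invariant by $G$-equivariance of $\mathcal{A}$, so $\mathrm{Hor}$ commutes with the $G$-action. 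Combining this with the $G$-equivariance of $\mathcal{A}$ and the fact that push-forward of vector fields by a diffeomorphism intertwines Lie brackets, one reads off $B(g\cdot u_m,g\cdot v_m)=g\cdot B(u_m,v_m)$.

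The crux is the stratum claim. Fix $m\in M$ and write $H_m:=G_m$, a conjugate of $H$. The proof of Theorem \ref{lattice theorem} established that the $H_m$-action on $H(M)_m$---which is isomorphic as an $H_m$-representation to a linear slice for the $G$-action at $m$---is \emph{trivial}: since $M$ has single orbit type the slice representation has a unique orbit type, and this must be $(H)$ as the origin is a fixed point. Thus horizontal vectors at $m$ are $H_m$-fixed. Using $G$-equivariance and the fact that $B(u_m,v_m)$ depends only on horizontal components, we have
$$h\cdot B(u_m,v_m)=B(h\cdot\mathrm{Hor}\,u_m,h\cdot\mathrm{Hor}\,v_m)=B(u_m,v_m)\qquad(h\in H_m),$$
so the $G$-isotropy of $B(u_m,v_m)\in\nu_m$ contains $H_m$. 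Since Theorem \ref{lattice theorem} identifies all isotropy classes occurring in $\nu$ as $(K)\leq(H)$, the reverse inequality $(H)\leq(K)$ forces $(K)=(H)$, whence $B(u_m,v_m)\in\nu_{(H)}$.

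Finally, to obtain $\mathcal{B}$, the $G$-equivariance of $B$ together with its factorization through $H(M)\times_M H(M)\to\nu_{(H)}$ allows it to descend to a map on $G$-orbit spaces. Via the connection-induced isomorphism $H(M)/G\simeq T(M/G)$ of Theorem \ref{lattice theorem} (equivalently the split sequence \eqref{short_exact}), this yields the unique $\mathcal{B}:\wedge^2 T(M/G)\to\nu_{(H)}/G$ covering the identity on $M/G$; uniqueness follows from surjectivity of the quotient map. The main obstacle is the stratum statement, which rests essentially on the triviality of the isotropy action on the horizontal slice---a feature special to the single orbit type setting without an obvious counterpart in a multi-orbit-type generalization.
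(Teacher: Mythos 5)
Your proposal is correct and follows essentially the same route as the paper: tensoriality via the Leibniz identity for the bracket, equivariance from invariance of $H(M)$ and naturality of the Lie bracket, and the stratum claim from the triviality of the slice representation forcing $H$-fixedness of horizontal vectors at points of $M_H$. Your packaging of the stratum step (noting that $B$ factors through horizontal parts, so $h\cdot B(u_m,v_m)=B(h\cdot\mathrm{Hor}\,u_m,h\cdot\mathrm{Hor}\,v_m)=B(u_m,v_m)$) is a slightly cleaner version of the paper's explicit construction of the extensions $\tilde{X}=\mathrm{Hor}X+\varphi_{h^{-1}}^{\ast}\mathrm{Ver}X$, but the underlying mechanism is identical.
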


\begin{proof}
We demonstrate that equation \eqref{curvature} uniquely determines a well defined $\nu$-valued 2-form on $M$
by showing that it is tensorial, i.e. that $B(fX,Y)=fB(X,Y)$.
Recall that $[fX,Y]=f[X,Y]-Y(f)X$ so that
\begin{align*}
B(fX,Y)&=-\mathcal{A}\left([f\mathrm{Hor}X,\mathrm{Hor}Y]\right)\\
&=-\mathcal{A}\left(f[\mathrm{Hor}X,\mathrm{Hor}Y]-\mathrm{Hor}Y(f)\mathrm{Hor}X\right)\\
&=-f\mathcal{A}\left([\mathrm{Hor}X,\mathrm{Hor}Y]\right)=fB(X,Y),
\end{align*}
as required.

Denote by $\varphi_{h}$ the diffeomorphism on $M$ corresponding to the group element $h\in G$.
To check $G$-equivariance of $B$, given $X$ and $Y$ vector fields extending $u_m,v_m\in T_{m}M$,
note that $(\varphi_{g^{-1}}^{\ast}X)(g\cdot m)=T_{m}\varphi_{g}(X(m))=g\cdot u_m$ and similarly for $\varphi_{g^{-1}}^{\ast}Y$. Therefore,
\begin{align*}
(\varphi_{g}^{\ast}B)(u_m,v_m)&=B(g\cdot u_m,g\cdot v_m)=-\mathcal{A}([\mathrm{Hor}(\varphi_{g^{-1}}^{\ast}X),\mathrm{Hor}(\varphi_{g^{-1}}^{\ast}Y)](g\cdot m))\\
&=-\mathcal{A}((\varphi_{g^{-1}}^{\ast}[\mathrm{Hor}X,\mathrm{Hor}Y])(g\cdot m))
=-\mathcal{A}(g\cdot([\mathrm{Hor}X,\mathrm{Hor}Y](m)))\\
&=-g\cdot\mathcal{A}([\mathrm{Hor}X,\mathrm{Hor}Y](m))=g\cdot B(u_m,v_m),
\end{align*}
as required.
This equivariance has the following consequence for the values of the curvature form.
Let $m\in M_H$ and $u_m,v_m\in T_{m}M$. Recall that since the manifold consists of a single orbit type, the $H$-action fixes $H(M)_m$.
Furthermore, since
\begin{equation*}
u_m=\mathrm{Hor}(u_m)+(\mathcal{A}(u_m))_{M}(m)=\mathrm{Hor}(u_m)+\mathrm{Ver}(u_m),
\end{equation*}
we have $h\cdot u_m=\mathrm{Hor}(u_m)+h\cdot \mathrm{Ver}(u_m)$. Now let $X$ extend $u_m$. Then
the vector field $\tilde{X}=\mathrm{Hor}X+\varphi_{h^{-1}}^{\ast}\mathrm{Ver}X$ extends $h\cdot u_m$ since
\begin{align*}
\tilde{X}(h\cdot m)&=\mathrm{Hor}X(h\cdot m)+(\varphi_{h^{-1}}^{\ast}\mathrm{Ver}X)(h\cdot m)\\
&=\mathrm{Hor}X(m)+h\cdot\mathrm{Ver}X(m)\\
&=h\cdot \mathrm{Hor}X(m)+h\cdot \mathrm{Ver}X(m)\\
&=h\cdot (\mathrm{Hor}X(m)+\mathrm{Ver}X(m))=h\cdot u_m.
\end{align*}

Following a similar construction for $v_m$ extended by $Y$, and a comparable definition of $\tilde{Y}$, we have
\begin{align*}
B(h\cdot u_m,h\cdot v_m)&=-\mathcal{A}([\mathrm{Hor}\tilde{X},\mathrm{Hor}\tilde{Y}](h\cdot m))\\
&=-\mathcal{A}([\mathrm{Hor}X,\mathrm{Hor}Y](m))\\
&=B(u_m,v_m).
\end{align*}

On the other hand, by $G$-equivariance of $B$, we have,
\begin{equation*}
B(h\cdot u_m,h\cdot v_m)=h \cdot B(u_m,v_m),
\end{equation*}
so that we are forced to conclude that for every $h\in H$,  $B(u_m,v_m)=h\cdot B(u_m,v_m)$
and therefore the curvature takes values in the $H$-fixed set of $\nu_{m}=\mathfrak{g}/\mathfrak{h}$, which is the fiber over $m$ of the stratum of $\nu$ containing $0_m$, that is, $\nu_{(K)}$.

For the reduced curvature form, let $u_{[m]},v_{[m]}\in T_{[m]}(M/G)$. We define
\begin{equation*}
\mathcal{B}(u_{[m]},v_{[m]}):=[B(u_m,v_m)],
\end{equation*}
where $T_{m}\pi (u_m)=u_{[m]}$, $T_{m}\pi (v_m)=v_{[m]}$ and
$[B(u_m,v_m)]$ denotes the element of $\nu_{(H)}/G$ determined by
$B(u_m,v_m)\in \nu_{(H)}$. An easy calculation using
$G$-equivariance of $B$ shows that this is well defined.
\end{proof}

\begin{remark}
\label{ehresmann approach} Alternatively, we can approach the
covariant derivative of the connection by realizing that a singular
connection is equivalent to an Ehresmann connection as follows. An
Ehresmann connection is simply a choice of horizontal distribution
complementary to the vertical distribution that is $G$-invariant.
Given a singular connection $\mathcal{A}$, one defines an Ehresmann
connection $\Gamma\in \Omega^1(M;V(M))$ (a $V(M)$-valued one form on
$M$, where $V(M)$ is the vertical distribution) by
$\Gamma(v_m)=[\sigma]_m\circ \mathcal{A}(v_m)$ where
$[\sigma]:\nu\rightarrow V$ is the bundle isomorphism induced by the
action $\sigma:\mathfrak{g}\rightarrow TM$. Conversely, given an
Ehresmann connection $\Gamma$ one induces the singular connection by
$\mathcal{A}(v_m)=[\sigma]^{-1}\circ \Gamma(v_m)$. Now, recall that
for $\lambda\in\Omega^{k}(M;V(M))$, a $V(M)$-valued $k$-form on $M$,
the definition of the covariant derivative of $\lambda$,
$D\lambda\in \Omega^{k+1}(M;V(M))$ is
\begin{align}
\label{covariant ehresmann}\displaystyle
\nonumber D\lambda(X_0,\dots, X_k):&=\sum_{i=0}^{k} (-1)^i[X_i^{\mathrm{hor}},\lambda(X_0^{\mathrm{hor}},\dots,\hat{X}_{i},\dots,X_k^{\mathrm{hor}})]^{\mathrm{ver}}\\
&+\sum_{0\leq i<j\leq k} (-1)^{i+j}\lambda([X_i^{\mathrm{hor}},X_j^{\mathrm{hor}}],X_0^{\mathrm{hor}},\dots, \hat{X}_{i},\dots,\hat{X}_j,\dots,X_k^{\mathrm{hor}})
\end{align}
where $X_0,\dots,X_K$ are vector fields on $M$, and $X^{\mathrm{hor}}$ and $X^{\mathrm{ver}}$ is the horizontal and vertical projection
of $X$.

We can then alternatively define
\begin{equation}
\label{covariant 2}
D\mathcal{A}:=[\sigma]\circ D\Gamma
\end{equation}
for $\Gamma=[\sigma]\circ\mathcal{A}$ which is consistent with
equation \eqref{curvature} since, using equation \eqref{covariant
ehresmann},
$\mathrm{curv}_{\Gamma}(X,Y):=D\Gamma(X,Y)=-\Gamma([X^{\mathrm{hor}},Y^{\mathrm{hor}}])$
which satisfies $B=[\sigma]\circ \mathrm{curv}_{\Gamma}$ where $B$
is the curvature of the singular connection $\mathcal{A}$ as defined
in equation \eqref{curvature}. Finally we remark that using
definition \eqref{covariant 2} the Bianchi identity for $B$ follows
immediately since $D\mathrm{curv}_{\Gamma}=0$ implies $DB=0$.
\end{remark}

\subsection{A holonomy theorem}

Our first result concerns the lowest dimensional stratum of the $\operatorname{Ad}$-induced $H$-action on $\mathfrak{g}/\mathfrak{h}$. As the next lemma shows, this stratum turns out to be the Lie algebra
of the group $N(H)/H$ which is precisely the group that acts freely on the submanifold
$M_{H}$. We will then establish a bundle reduction theorem that will enable us to prove the Ambrose-Singer theorem for singular connections: that the Lie algebra of the holonomy group for a singular connection is given by the image of the curvature of the connection. By Lemma \ref{zero stratum}, this holonomy group is then contained in the group $N(H)/H$.

\begin{lemma}
\label{zero stratum}
The stratum containing $0\in \mathfrak{g}/\mathfrak{h}$ corresponding to the stabilizer group $H$,
i.e. the lowest dimensional stratum for the $H$-induced stratification, is the Lie algebra of the group
$N(H)/H$.
\end{lemma}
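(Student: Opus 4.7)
The strategy is to identify the lowest dimensional stratum with the fixed point set of the $H$-action on $\mathfrak{g}/\mathfrak{h}$, and then to compute that fixed point set algebraically. First, I would note that since the stabilizer of any $[\xi] \in \mathfrak{g}/\mathfrak{h}$ under the $\operatorname{Ad}$-induced $H$-action is a subgroup of $H$, the stratum of maximal isotropy type $(H)$ coincides with the fixed point set $(\mathfrak{g}/\mathfrak{h})^H$, which obviously contains $[0]$. Hence the stratum containing the origin is exactly the $H$-fixed set.

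The core computation will be that $(\mathfrak{g}/\mathfrak{h})^H = \mathfrak{n}(\mathfrak{h})/\mathfrak{h}$, where $\mathfrak{n}(\mathfrak{h}) := \{\xi \in \mathfrak{g} : [\mathfrak{h},\xi] \subseteq \mathfrak{h}\}$ denotes the normalizer of $\mathfrak{h}$ in $\mathfrak{g}$. For the inclusion $(\mathfrak{g}/\mathfrak{h})^H \subseteq \mathfrak{n}(\mathfrak{h})/\mathfrak{h}$, I would differentiate the condition $\operatorname{Ad}_h \xi - \xi \in \mathfrak{h}$ at $h = \exp(t\eta)$ for arbitrary $\eta \in \mathfrak{h}$, yielding $[\eta,\xi] \in \mathfrak{h}$. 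For the reverse inclusion, the key observation is that $H$ is normal in $N(H)$, so the conjugation action of $H$ on $N(H)$ descends to the trivial action on the group quotient $N(H)/H$; differentiating at the identity coset shows that the induced $\operatorname{Ad}$-action of $H$ on $\mathfrak{n}(\mathfrak{h})/\mathfrak{h}$ is likewise trivial, whence every class $[\xi]$ with $\xi \in \mathfrak{n}(\mathfrak{h})$ is fixed.

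To conclude, I would invoke the identification $\mathrm{Lie}(N(H)/H) = \mathrm{Lie}(N(H))/\mathfrak{h} = \mathfrak{n}(\mathfrak{h})/\mathfrak{h}$. The nontrivial equality here is $\mathrm{Lie}(N(H)) = \mathfrak{n}(\mathfrak{h})$: the inclusion $\subseteq$ follows by differentiating $\exp(t\xi) H \exp(-t\xi) = H$ in $t$, while the reverse direction uses that $\operatorname{Ad}_{\exp(t\xi)}$ preserves $\mathfrak{h}$ whenever $\xi \in \mathfrak{n}(\mathfrak{h})$, so $\exp(t\xi)$ at least normalizes the identity component $H_0$. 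The main obstacle, and the one step requiring genuine attention rather than formal differentiation, is extending this from $H_0$ to all of $H$ when $H$ is disconnected; here compactness of $H$ is essential, since for each fixed $h \in H$ the continuous map $t \mapsto \exp(t\xi)\, h\, \exp(-t\xi)\, H_0$ takes values in the finite set $H/H_0$ and must therefore be constant in $t$, so $\exp(t\xi)\, h\, \exp(-t\xi) \in hH_0 \subseteq H$ for all $t$. Combining the three steps gives the identification of the zero stratum with $\mathrm{Lie}(N(H)/H)$.
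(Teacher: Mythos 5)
Your first step---identifying the stratum through the origin with the fixed-point set $(\mathfrak{g}/\mathfrak{h})^{H}$, characterized by $\operatorname{Ad}_{h}\xi-\xi\in\mathfrak{h}$ for all $h\in H$---agrees with the paper. The gap is in everything after that: the identities $(\mathfrak{g}/\mathfrak{h})^{H}=\mathfrak{n}(\mathfrak{h})/\mathfrak{h}$ and $\mathrm{Lie}(N(H))=\mathfrak{n}(\mathfrak{h})$ are both false when $H$ is disconnected, which is exactly the case the lemma must cover ($H$ is only assumed compact). Take $G=SO(3)$ and $H=\{I,R\}$ with $R$ the rotation by $\pi$ about the $z$-axis. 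Then $\mathfrak{h}=0$, so $\mathfrak{n}(\mathfrak{h})=\mathfrak{g}$ is three-dimensional; but $(\mathfrak{g}/\mathfrak{h})^{H}=\ker(\operatorname{Ad}_{R}-\mathrm{id})$ is the one-dimensional span of the $z$-axis generator, and $N(H)$ is the centralizer of $R$, a copy of $O(2)$, so $\mathrm{Lie}(N(H))$ is also one-dimensional. The lemma's conclusion holds in this example, but your intermediate identification does not. Concretely, your reverse inclusion only proves that $H$ acts trivially on $\mathrm{Lie}(N(H))/\mathfrak{h}$ (that part is fine), not on $\mathfrak{n}(\mathfrak{h})/\mathfrak{h}$; the two differ because your forward inclusion, obtained by differentiating along $h=\exp(t\eta)$, only sees the identity component $H_{0}$ and therefore loses information.

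The patch you propose for disconnected $H$ is circular: the curve $t\mapsto\exp(t\xi)\,h\,\exp(-t\xi)\,H_{0}$ takes values in $G/H_{0}$, not in the finite set $H/H_{0}$ --- that it stays inside $H/H_{0}$ is precisely what you are trying to prove, and in the example above, with $\xi$ a generator of rotations about the $x$-axis, it visibly leaves $H$. The correct route, which is the paper's, works with the full global condition $\operatorname{Ad}_{h}\xi-\xi\in\mathfrak{h}$ for \emph{all} $h\in H$ rather than its infinitesimal shadow $[\mathfrak{h},\xi]\subseteq\mathfrak{h}$: one shows via Baker--Campbell--Hausdorff that this condition forces $h\exp(t\xi)h^{-1}\exp(-t\xi)=\exp\bigl(t(\operatorname{Ad}_{h}\xi-\xi)+\mathcal{O}(t^{2})\bigr)$ to lie in $\exp(\mathfrak{h})\subseteq H$, whence $\exp(t\xi)\in N(H)$. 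To repair your proof, replace the algebraic normalizer $\mathfrak{n}(\mathfrak{h})$ throughout by the set $\{\xi\in\mathfrak{g}:\operatorname{Ad}_{h}\xi-\xi\in\mathfrak{h}\ \text{for all}\ h\in H\}$ and prove that this set equals $\mathrm{Lie}(N(H))$.
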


\begin{proof}
By definition $(\mathfrak{g}/\mathfrak{h})_{(H)}=(\mathfrak{g}/\mathfrak{h})^{H}$ the fixed set
by the linear $H$-action. Let us denote this action by $h\cdot [\xi]$. We then have $h\cdot [\xi]=[\operatorname{Ad}_{h}\xi]$ and therefore
\begin{align*}
(\mathfrak{g}/\mathfrak{h})_{(H)}&=\left\{[\xi]\in\mathfrak{g}/\mathfrak{h}\ \, : \, \ [\operatorname{Ad}_{h}\xi]=[\xi]\mathrm{\ for\  all\ }h\in H\right\}\\
&=\{[\xi]\in\mathfrak{g}/\h\ \, : \,\operatorname{Ad}_{h}\xi-\xi\in \mathfrak{h}\mathrm{\ for\  all\ }h\in H\}.
\end{align*}
Next, we prove that the set $\{\xi\in\mathfrak{g}\, : \,\operatorname{Ad}_{h}\xi-\xi\in \mathfrak{h}\mathrm{\ for\  all\ }h\in H\}$ is in fact Lie$(N(H))$.
First suppose $\xi\in \mathrm{Lie}(N(H))$. Then we have $\exp (t\xi) h\exp (-t\xi)\in H$ for all $t$ and therefore $h^{-1}\exp (t\xi) h\exp (-t\xi)\in H$ for all $t$ which is a curve passing through $e$ at $t=0$.
Therefore we have
\begin{equation*}
\ddt h^{-1}\exp (t \xi) h \exp (-t\xi) =\operatorname{Ad}_{h^{-1}}\xi - \xi \in \mathfrak{h},
\end{equation*}
as required.
Conversely, suppose $\xi \in \mathfrak{g}$ satisfies
\begin{equation}
\label{equation Ad_minus_Id}
\operatorname{Ad}_{h}\xi-\xi \in \mathfrak{h} \mathrm{\ for\ all}\ h\in H.
\end{equation}
We need to show that $\exp (t\xi) h \exp (-t\xi)\in H$ for all $t$. Notice that, differentiating equation
\eqref{equation Ad_minus_Id} at $e$ in the direction $\eta\in \mathfrak{h}$, we have
$[\eta,\xi]\in \mathfrak{h}$. Now, because $\exp$ intertwines the $\operatorname{Ad}$ action
with the $\operatorname{AD}$ action (the action by inner automorphisms of $G$ on itself) we have
\begin{equation*}
h\exp(t\xi) h^{-1}\exp (-t\xi)=(\operatorname{AD}_{h}\exp (t\xi)) \exp (-t\xi)=\exp (t \operatorname{Ad}_{h}\xi)\exp (-t\xi).
\end{equation*}
We compute this last expression using the Baker-Campbell-Hausdorff formula as follows:
$\exp (t\operatorname{Ad}_{h}\xi) \exp(-t\xi)=\exp (t [\operatorname{Ad}_{h}\xi - \xi +\mathcal{O}(t^2)])$,
where $\mathcal{O}(t^2)$ is a convergent series in $\mathfrak{g}$ each term of which is a composition of brackets containing a
bracket of $\operatorname{Ad}_{h}\xi$ or $\xi$ with $[\operatorname{Ad}_{h}\xi,\xi]$.
However, $[\operatorname{Ad}_{h}\xi,\xi]=[\operatorname{Ad}_{h}\xi-\xi,\xi]\in \mathfrak{h}$ so if
we take a bracket of $[\operatorname{Ad}_{h}\xi,\xi]$ with $\xi$, we again get an element in $\mathfrak{h}$.
For the other type of term, involving a bracket of $\operatorname{Ad}_{h}\xi$ with $[\operatorname{Ad}_{h}\xi,\xi]$, we have just seen that $[\operatorname{Ad}_{h}\xi,\xi]\in \mathfrak{h}$.
But, if $\eta'\in\h$, $$[\operatorname{Ad}_{h}\xi,\eta']=[\operatorname{Ad}_{h}\xi-\xi +\xi,\eta']=[\eta,\eta']+[\xi,\eta']\in \mathfrak{h}$$ since $\eta=\operatorname{Ad}_h\xi-\xi \in\mathfrak{h}$ by \eqref{equation Ad_minus_Id}.
It follows that $\mathcal{O}(t^2)\in \mathfrak{h}$ and therefore $h\exp (t\xi) h^{-1}\exp (-t\xi)\in H$. Multiplying on the left by $h^{-1}$ we conclude that $\exp( t\xi) h \exp (-t\xi) \in H$ as required.

\end{proof}
The next theorem addresses bundle reduction to a principal bundle for single orbit type manifolds and establishes
a one-to-one correspondence with singular connections and principal bundle
connections of the reduced bundle.
\begin{theorem}
\label{bundle reduction}
Consider the following commutative diagram.
\[
\xymatrix{
M_{H}\ar[r]^{=}\ar[dd]_{\pi'}&M_{H}\ar[r]^{\iota}\ar[dd]^{\pi_{N(H)}}&M\ar[dd]^{\pi}\\
\\
M_{H}/(N(H)/H)\ar[r]^{=}&M_{H}/(N(H))\ar[r]^{\simeq}&M/G}
\]
Then $\pi_{N(H)}:M_{H}\rightarrow M_{H}/N(H)$ is a bundle reduction of $\pi:M\rightarrow M/G$
and $\pi':M_{H}\rightarrow M_{H}/(N(H)/H)$ is a \emph{principal} bundle reduction of $M_{H}\rightarrow M_{H}/N(H)$.

There is a one-to-one correspondence between principal bundle connections on $M_{H}\rightarrow M_{H}/(N(H)/H)$ and singular connections on $M$. Furthermore, the curvature form for the singular connection restricted to $M_H$ is equal to the curvature form $\Omega$ of the principal connection.

\end{theorem}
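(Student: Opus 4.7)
The bundle reduction claim is the standard slice-theoretic fact recalled in the preliminaries: every $G$-orbit intersects $M_H$ in exactly one $N(H)$-orbit, and $N(H)/H$ acts freely and properly on $M_H$, so $\pi':M_H\to M_H/(N(H)/H)$ is a principal bundle naturally identified with $\pi:M\to M/G$ through $\iota$. I will therefore focus on the connection correspondence. For the forward direction $\mathcal{A}\mapsto\tilde{\mathcal{A}}$, define $\tilde{\mathcal{A}}:=\mathcal{A}|_{TM_H}$, which a priori takes values in $\g/\h$ because $\nu|_{M_H}=M_H\times\g/\h$ by Proposition \ref{properties of nu}(1). For $m\in M_H$ one has $T_mM_H=(T_mM)^H$, so $G$-equivariance of $\mathcal{A}$ applied to $h\in H$ gives $\mathcal{A}(v)=\mathcal{A}(h\cdot v)=h\cdot\mathcal{A}(v)$ for $v\in T_mM_H$, forcing $\mathcal{A}(v)\in(\g/\h)^H=\mathrm{Lie}(N(H)/H)$ by \lemref{zero stratum}. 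Axioms (i) and (ii) of the singular connection, specialised to $N(H)\subset G$ and to $\xi\in\mathrm{Lie}(N(H))$ respectively, are exactly the equivariance and reproduction axioms of a principal connection on $M_H\to M_H/(N(H)/H)$.

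For the backward direction I construct $\mathcal{A}$ in two steps. First, extend $\tilde{\mathcal{A}}$ over $TM|_{M_H}$: the slice theorem together with the single orbit type hypothesis yields at each $m\in M_H$ the (non-direct) decomposition $T_mM=\g\cdot m+T_mM_H$ whose overlap is $T_m(N(H)\cdot m)$, the image of $\mathrm{Lie}(N(H))$. For $v=\xi_M(m)+w$ with $w\in T_mM_H$ set $\tilde{\mathcal{A}}'(v):=[\xi]_m+\tilde{\mathcal{A}}(w)$, where $\mathrm{Lie}(N(H)/H)$ is viewed inside $\g/\h$. The crucial well-definedness reduces to the case of a competing decomposition with $\xi$ shifted by $\eta+\lambda$ ($\eta\in\mathrm{Lie}(N(H))$, $\lambda\in\h$) and $w$ shifted by $-\eta_M(m)$; the identity $\tilde{\mathcal{A}}(\eta_M(m))=[\eta]_m$ (reproduction for $\tilde{\mathcal{A}}$) together with $[\lambda]=0$ in $\g/\h$ makes both choices agree. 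Second, extend to all of $TM$ by $G$-equivariance: for $m\in M_H$ set $\mathcal{A}(v_{g\cdot m}):=g\cdot\tilde{\mathcal{A}}'(g^{-1}\cdot v_{g\cdot m})$. If $g\cdot m=g'\cdot m'$ with $m,m'\in M_H$ then $G_{g\cdot m}=gHg^{-1}=g'Hg'^{-1}$ forces $g^{-1}g'\in N(H)$, so independence of the chosen $g$ reduces to $N(H)$-equivariance of $\tilde{\mathcal{A}}'$, which follows from $N(H)/H$-equivariance of $\tilde{\mathcal{A}}$ together with the invariance of $\h$ and $\mathrm{Lie}(N(H))$ under $\mathrm{Ad}_n$, $n\in N(H)$. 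Smoothness is verified on tube charts around a single $G$-orbit, and axioms (i) and (ii) for $\mathcal{A}$ hold by construction.

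For the curvature identity, the argument in the proof of \thmref{lattice theorem} shows that $H$ fixes $H(M)_m$ pointwise for $m\in M_H$, so $H(M)|_{M_H}\subset TM_H$ and is precisely the horizontal bundle of $\tilde{\mathcal{A}}$ on $M_H$. Horizontal lifts of vector fields on $M_H/(N(H)/H)$ are therefore the same whether computed with $\mathcal{A}$ or $\tilde{\mathcal{A}}$, their brackets remain tangent to $M_H$, and since $\mathcal{A}|_{TM_H}=\tilde{\mathcal{A}}$, the formula \eqref{curvature} for $B$ specialises on $M_H$ to the standard Cartan structure formula for $\Omega$, yielding $B|_{M_H}=\Omega$. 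I expect the main obstacle to be the well-definedness of the extensions $\tilde{\mathcal{A}}\leadsto\tilde{\mathcal{A}}'\leadsto\mathcal{A}$: both steps require reconciling the non-uniqueness of the slice-type decomposition $T_mM=\g\cdot m+T_mM_H$ against the behaviour of $\tilde{\mathcal{A}}$ on fundamental vector fields of the $N(H)/H$-action, while the smoothness of the resulting bundle map demands a careful tube-chart argument built from the local models used in the proof of the stabilizer bundle proposition.
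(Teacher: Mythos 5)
Your proposal is correct and establishes everything the theorem claims, but it reaches the connection correspondence by a genuinely different route in both directions. For singular~$\to$~principal, the paper shows $\ker\mathcal{A}(m)\subset T_mM_H$ by choosing a complement $\mathfrak{r}$ of $\mathfrak{n}=\mathrm{Lie}(N(H))$ in $\mathfrak{g}$ and comparing the splittings $T_mM=T_mM_H\oplus\mathfrak{r}\cdot m$ and $T_mM=\ker\mathcal{A}(m)\oplus\mathfrak{g}\cdot m$, then restricts $\mathcal{A}$; you instead use $T_mM_H=(T_mM)^H$ and $H$-equivariance to force $\mathcal{A}(v)\in(\mathfrak{g}/\mathfrak{h})^H=\mathfrak{n}/\mathfrak{h}$ via Lemma~\ref{zero stratum}, which is cleaner and makes explicit why the values land in the Lie algebra of $N(H)/H$. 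For principal~$\to$~singular, the paper works entirely with horizontal distributions, setting $H_{g\cdot m}:=T_m\varphi_g(T_m\iota(Q_m))$, resolving the ambiguity $g\mapsto gn$, $n\in N(H)$, by $N(H)$-invariance of $Q$, and then reading off $\mathcal{A}$ as the projection associated with $TM=H(M)\oplus V(M)$ (so surjectivity and transversality come for free); you extend the connection one-form itself, first over $TM|_{M_H}$ through the non-direct decomposition $T_mM=\mathfrak{g}\cdot m+T_mM_H$ and then by $G$-equivariance, trading the paper's geometric ambiguity for the algebraic ambiguity $\xi\mapsto\xi+\eta$, $\eta\in\mathfrak{n}$, which you correctly resolve with the reproduction property of $\tilde{\mathcal{A}}$. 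Your deferral of smoothness to a tube-chart check is no lighter than the paper's own treatment. The curvature identity is argued essentially as in the paper: horizontal spaces over $M_H$ lie in $TM_H$, extensions can be chosen tangent to $M_H$ (legitimate since Proposition~\ref{proposition curvature} makes $B$ tensorial), and the bracket in $M$ of fields tangent to $M_H$ restricts to the bracket in $M_H$.
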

\begin{proof}
As we have already remarked, every $G$-orbit in $M$ intersects $M_H$
in a unique $N(H)$-orbit. Therefore for $m\in M_H$,
$\pi_{N(H)}(m)=\pi(\iota(m))$, and then $M/G\simeq M_H/N(H)$.
Furthermore, since $H$ fixes every point of $M_H$, the free action
of $N(H)/H$ on $M_H$ has the same orbits as the action of $N(H)$ on
$M_H$ and therefore the bundle $M_H\rightarrow M_H/(N(H)/H)$ is a
principal bundle reduction of $M_H\rightarrow M_H/N(H)$. To set up
the one-to-one correspondence of connections, we start with a
connection on the principal bundle $\pi':M_H\rightarrow
M_H/(N(H)/H)$. Denote this connection by $\Gamma$ consisting of
horizontal spaces $Q_m$ for each $m\in M_H$ and its connection form
by $\omega:TM_{H}\rightarrow \mathfrak{n}/\mathfrak{h}$. We show how
to induce from this data a connection on $M\rightarrow M/G$ where
the connection data will consist of a horizontal distribution
invariant with respect to the group action. Given $m'\in M$ we have
$m'=g\cdot m$ for some $m\in M_H$ and $g\in G$. Denoting as before
the action of $G$ on $M$ by $\varphi$, and by slight abuse of
notation, denoting the restricted action of $N(H)$ on $M_H$ also by
$\varphi$, take
\begin{equation}
\label{small to big connection} H_{m'}:=T_m \varphi_{g}\circ
T_{m}\iota(Q_{m}).
\end{equation}
To check it is well defined, take another realization, $m'=g_1\cdot
m_1$ where $m_1\in M_H$. Then, $g\cdot m=g_1\cdot m_1$ so that
$g_{1}^{-1}g\cdot m=m_1$ and therefore $h:=g_{1}^{-1}g\in N(H)$.
Now, by the $N(H)$-invariance of the connection on $M_H$ we know
that $Q_{m_1}=T_{m}\varphi_{h}\cdot Q_{m}$ and of course for $h\in
N(H)$, $\iota\circ\varphi_{h}=\varphi_{h}\circ\iota$. Therefore,
\begin{align*}
T_{m_1}\varphi_{g_1}\circ T_{m_1}\iota
(Q_{m_1})&=T_{m_1}\varphi_{g_1}\circ T_{h\cdot m}\iota \circ
T_m\varphi_{h} (Q_{m}) =T_m\varphi_{g_1h}\circ T_m\iota
(Q_m)=T_m\varphi_g\circ T_m\iota (Q_m),
\end{align*}
proving that equation \eqref{small to big connection} is well defined. We next establish that these
horizontal spaces are complementary to the vertical spaces in $M\rightarrow M/G$.
For any $m'\in M$, represented by $m'=g\cdot m$ for $m\in M_H$, we have the following commutative diagram,
\[
\xymatrix{
Q_m\ar[r]^{T_{m}\phi_{g}\circ T_{m}\iota}\ar[dd]_{T_{m}\pi'} & H_{m'}\ar[dd]^{T_{m'}\pi}\\
\\
T_{\pi'(m)}(M_{H}/(N(H)/H)\ar[r]^{\simeq} & T_{\pi(m')}(M/G).}
\]
Since $T_{m}\pi':Q_{m}\rightarrow T_{\pi'(m)}(M_{H}/(N(H)/H))$ is an
isomorphism and the diagram commutes, we must have that $T_{m'}\pi:
H_{m'}\rightarrow T_{\pi(m')}(M/G)$ is also an isomorphism, proving
that $H_{m'}$ is a complement to $\mathrm{ker}\,T_{m'}\pi$, so that
the all the $H_m$ spaces define a smooth distribution $H(M)$
transversal to the $G$-action on $M$. By \eqref{small to big
connection} this distribution is $G$-invariant. Therefore it defines
 a singular connection on $M$ with corresponding
connection form $\mathcal{A}:TM\rightarrow \nu$, from the
isomorphism $TM\simeq H(M)\oplus \nu$, given by
$$
\mathcal{A}(v)=\mathcal{P}(v)
$$
where $\mathcal{P}:TM\rightarrow \nu$ is the projection induced from the splitting.

Conversely, starting with a singular connection $\mathcal{A}:
TM\rightarrow \nu$ we induce a principal connection on
$M_{H}\rightarrow M_H/(N(H)/H)$. As before, let
$H_m:=\mathrm{Ker}\,\mathcal{A}(m)$. For $m\in M_H$, in fact
$H_m\subset T_{m}M_{H}$. To see this, recall that $M=G\cdot M_H$ so
that, for $m\in M_H$ it follows that $T_{m}M=T_{m}M_H +
\mathfrak{g}\cdot m$. We refine this by taking an arbitrary
complement, $\mathfrak{r}$, of $\mathfrak{n}:=\mathrm{Lie}(N(H))$ in
$\mathfrak{g}$ so that $\mathfrak{g}=\mathfrak{n}\oplus
\mathfrak{r}$. Now, since $\xi_{M}(m)\in T_{m}M_{H}$ if and only if
$\xi\in \mathfrak{n}$, we have $T_{m}M=T_{m}M_H\oplus
\mathfrak{r}\cdot m$. On the other hand, since $T_{m}M=H_m\oplus
\mathfrak{g}\cdot m$ it follows that $H_m\subset T_{m}M_H$. In fact
we get the finer splitting $T_{m}M=H_m\oplus \mathfrak{n}\cdot
m\oplus \mathfrak{r}\cdot m$ with $T_{m}M_{H}=H_m\oplus
\mathfrak{n}\cdot m$. Finally notice that since the distribution
defined by the spaces $H_m$ is $G$-invariant, then it is also
$N(H)$-invariant so that we can define the horizontal spaces on
$TM_H$ by $Q_{m}=H_m$ for $m\in M_H$.

To describe the corresponding induced connection form, notice that
$\iota^{\ast}\mathcal{A}:TM_{H}\rightarrow \iota^{\ast}\nu$ where
$\iota^{\ast}\nu$  is the pull back bundle of $\nu$ with respect to
the map $\iota:M_H\rightarrow M$, which is just the restriction of
the bundle $\nu$ to the base $M_H$. By Proposition \ref{properties
of nu}, this bundle is simply $M_H\times \mathfrak{g}/\mathfrak{h}$.
However $\iota^{\ast}\mathcal{A}$ only takes values in $M_H\times
\mathfrak{n}/\mathfrak{h}$ since $\xi_{M}(m)\in T_{m}M_H$ if and
only if $\xi\in \mathfrak{n}$. Clearly $\iota^{\ast}\mathcal{A}$ is
onto $M_H\times\mathfrak{n}/\mathfrak{h}$ and induces a map
$\omega:TM_{H}\rightarrow \mathfrak{n}/\mathfrak{h}$. This map is
$(N(H)/H)$-equivariant and it is easy to check that $\mathrm{Ker}\
\omega(m)=Q_m$, so that $\omega$ is the corresponding principal
connection form of the induced connection.

For the curvature correspondence, let $\Omega$ be the
$\mathfrak{n}/\mathfrak{h}$-valued curvature form of $\omega$ on
$M_H$. Let $B$ be the curvature of $\mathcal{A}$ as defined in
Definition \ref{definition curvature}. For $m\in M_H$, given
$u_m,v_m\in T_{m}M$, let $\tilde{u}_m:=\mathrm{Hor}\ u_m$, and
$\tilde{v}_m:=\mathrm{Hor}\ v_m$. Using the definition of $B$
\eqref{curvature}, let $X$ and $Y$ be arbitrary extensions of
$\tilde{u}_m$, $\tilde{v}_{m}$ chosen to be tangent to $M_H$.  In
fact we can construct these vector fields by first projecting $u_m,
v_m$ to the quotient $M/G$ and then taking arbitrary extensions in
$M/G\simeq M_H/(N(H)/H)$, denoted $\tilde{X}, \tilde{Y}$. Next, lift
the vector fields horizontally with respect to the principal bundle
$\pi':M_H\rightarrow M_H/(N(H)/H)$, and then extend them to the
entire manifold by the $G$-action, which we can do since $M$ is
saturated by $M_H$. Since the $G$-action preserves the horizontal
distribution, this will define globally two horizontal vector fields
on $M$ denoted by $X$ and $Y$, extending $\tilde{u}_m$ and
$\tilde{v}_m$. Since they are horizontal, and the horizontal
distribution, restricted to $M_H$ is contained in $TM_H$, their
restrictions to $M_H$ are smooth vector fields on $M_H$. We then
have
\begin{align*}
B(u_m,v_m)&=B(\tilde{u}_m,\tilde{v}_m)=-\mathcal{A}([\mathrm{Hor} X,\mathrm{Hor} Y])(m)\\
&=-\omega([\mathrm{Hor} X,\mathrm{Hor} Y])(m)
=\Omega(m)(\tilde{u}_m,\tilde{v}_m)
\end{align*}
where for the third equality, we use the fact that the Jacobi bracket of $\mathrm{Hor} X$ and $\mathrm{Hor} Y$ in $M$ evaluated at $m\in M_H$ coincides with their bracket as vector fields in $M_H$ (evaluated at $m$) so that the third equality holds.  The final equality is just a consequence of the usual curvature identity for principal bundles.
\end{proof}

\begin{theorem}[Ambrose-Singer]
\label{Ambrose Singer}
The horizontal lift of a curve in the base space $M/G$ through a point $m\in M_{H}$ lies entirely
in $M_{H}$ and the holonomy group through $m$, $\mathcal{H}(m)$, of the singular connection  is contained in $N(H)/H$.  The Lie algebra of the holonomy group through $m$ is given by the image of the curvature of the singular connection at $m$.

\end{theorem}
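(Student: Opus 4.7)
The plan is to reduce the theorem to the classical Ambrose--Singer theorem applied to the principal $N(H)/H$-bundle $\pi':M_H\rightarrow M_H/(N(H)/H)$, exploiting the one-to-one correspondence between singular connections on $M$ and principal connections on this reduced bundle furnished by Theorem \ref{bundle reduction}.

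The first step, which I expect to be the main obstacle, is to establish the ``stability'' claim that the horizontal lift through any $m\in M_H$ of a curve in $M/G$ remains in $M_H$. The key observation, already used within the proof of Theorem \ref{bundle reduction}, is that picking any linear complement $\mathfrak{r}$ of $\mathfrak{n}=\mathrm{Lie}(N(H))$ in $\mathfrak{g}$ yields the decomposition $T_mM=T_mM_H\oplus \mathfrak{r}\cdot m$ at any $m\in M_H$, because $\xi_M(m)\in T_mM_H$ if and only if $\xi\in\mathfrak{n}$. Combined with $T_mM=H_m\oplus\mathfrak{g}\cdot m$ this forces $H_m\subset T_mM_H$. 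Hence the horizontal lifting ODE $\dot m_h(t)=\mathrm{hor}_{m_h(t)}(\dot c(t))$ associated to a curve $c$ in $M/G\simeq M_H/(N(H)/H)$ has its right-hand side everywhere tangent to $M_H$ whenever the solution sits in $M_H$; uniqueness of solutions then keeps the lift inside $M_H$, proving (1).

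Once (1) is in place, the holonomy $\mathcal{H}(m)$ of $\mathcal{A}$ at $m\in M_H$ is computed from loops in $M/G\simeq M_H/(N(H)/H)$ whose horizontal lifts stay in $M_H$; comparing the endpoint with $m$ is done by acting with the structure group of the reduced principal bundle $\pi'$, which is $N(H)/H$. Thus $\mathcal{H}(m)\subseteq N(H)/H$, recovering the abstract bound provided by Lemma \ref{zero stratum}. In particular, the holonomy of the singular connection $\mathcal{A}$ and of the induced principal connection $\omega$ on $\pi'$ coincide as subgroups of $N(H)/H$, establishing (2).

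Finally, I would apply the classical Ambrose--Singer theorem to the principal bundle $(M_H,\omega)$: the Lie algebra of its holonomy group at $m$ equals the span of the values $\Omega(m')(u,v)$ as $m'$ ranges over points joinable to $m$ by $\omega$-horizontal paths and $u,v$ over $Q_{m'}=H_{m'}$. Two identifications supplied by Theorem \ref{bundle reduction} complete the argument: first, $\Omega$ coincides with the restriction of the singular curvature $B$ to $M_H$, so the spanning set above is precisely the image of $B$ at the $\mathcal{A}$-holonomy-connected component of $m$ in $M_H$; second, by (1) the $\omega$-horizontal paths in $M_H$ are exactly the $\mathcal{A}$-horizontal paths starting at $m$. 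Therefore the Lie algebra of $\mathcal{H}(m)$ is exactly the image of the curvature of $\mathcal{A}$, proving (3).
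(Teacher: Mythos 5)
Your proposal is correct and follows essentially the same route as the paper: both reduce the theorem to the classical Ambrose--Singer theorem on the principal bundle $M_H\rightarrow M_H/(N(H)/H)$ via the connection and curvature correspondence of Theorem \ref{bundle reduction}, and then transfer the holonomy and its Lie algebra back to the singular connection. The only real difference is in the first claim, where you keep horizontal lifts inside $M_H$ by an ODE-invariance argument based on the inclusion $H_{m'}\subset T_{m'}M_H$, while the paper constructs the lift inside $M_H$ from the induced principal connection and then proves uniqueness of singular horizontal lifts by writing a second lift as $g(t)\cdot L(t)$ and forcing $g(t)$ into the stabilizer; both mechanisms rest on the same inclusion of horizontal spaces established in Theorem \ref{bundle reduction}.
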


\begin{proof}
Given $[m]=\pi(m)\in M/G$ and a loop $l(t)$ in $M/G$ through $[m]$, consider $m\in M_H$ such that $\pi(m)=[m]$.
From Proposition \ref{bundle reduction} , we can view the loop $l$ as a loop in the base manifold
$M_H/(N(H)/H)$. Consider its horizontal lift, $L(t)$ in the usual sense of principal bundles, through the point $m\in M_H$. Since, by the proof of Theorem \ref{bundle reduction}, the horizontal spaces of the bundle $M_H\rightarrow M_{H}/(N(H)/H)$ are mapped into
horizontal spaces of $M\rightarrow M/G$, it follows that $L(t)$ is a horizontal curve in $M$ for the singular connection through the point
$m$ and projects to $l(t)$ by construction. However, the horizontal lift of a curve in $M/G$ to $M$, through a specified point $m$ is unique. To prove this, one uses approximately the same argument as in the free case. Suppose $L_2(t)$ is another curve through $m$ which projects to $l(t)$. It follows that $L_{2}(t)=g(t)L(t)$ for some curve $g(t)\in G$. This curve is not unique since the action is not free. However, $\ddt L_2(t)=\dot{g}(t)L(t)+g(t)\dot{L}(t)$ by the Leibniz identity, and therefore, applying the connection
to this expression one finds $0=\A\left(\ddt L_2(t)\right)=\A(\dot{g}(t)L(t))$ since $\dot{L}(t)$ is horizontal. The only way $\dot{g}(t)L(t)$ can be horizontal is if it is zero and therefore $g(t)$ must be a curve that lies for all time in the stabilizer of $L(t)$ and therefore $L_2(t)=L(t)$ as required.

Next, we show that the Lie algebra of the holonomy group $\mathcal{H}(m)$ is the image of the curvature of the singular connection at $m$.
Since the horizontal lift of a loop in $M/G$ through $m$ lies entirely in $M_H$, then $\mathcal{H}(m)\in N(H)/H$. Now we can apply the standard holonomy theorem to the principal bundle $M_H\rightarrow M_H/(N(H)/H)$ with the unique principal connection induced from the singular connection. The conclusion is that the Lie algebra of the holonomy group is given by the curvature of this induced principal connection at $m$. However, the curvature of $B$, evaluated at $m$ coincides with the curvature of the induced principal connection. Since this can be done for any $m$ the statement follows.
\end{proof}

\section{Singular Sternberg}
The singular connection form $\mathcal{A}$ allows us to write down a $G$-equivariant
diffeomorphism,
\begin{equation}
\label{equivariant diffeo}
\phi_{\mathcal{A}}:\tilde{M}\times \nu^{\ast}\rightarrow T^{\ast}M
\end{equation}
which will play the fundamental role in establishing the connection dependent realization of
the Poisson stratified space $T^{\ast}M/G$. The mapping \eqref{equivariant diffeo} is a fiber product factorization of phase space into zero momentum and nonzero momentum fibers respectively.
We call the quotient of the domain in \eqref{equivariant diffeo}, $\tilde{M}\times_{G} \nu^{\ast}$ the singular Sternberg space, as it generalizes the original representation due to Sternberg of $(T^{\ast}M)/G$ in the free category. Using $\phi_{\mathcal{A}}$ we will also determine the minimal coupling forms on the strata of the symplectic quotients.

The construction of the Sternberg space reviewed in Section 2 generalizes to single orbit type
manifolds as follows.
We start by constructing the zero momentum space $\tilde{M}$ by taking the pull
back of $M\rightarrow M/G$ by the cotangent projection $\tau_{M/G}:T^{\ast}(M/G)\rightarrow M/G$ so that we have
\[
\xymatrix{
\tilde{M}\ar[r]^{\tilde{\tau}}\ar[d]_{\pi^{\sharp}}&M\ar[d]^{\pi}\\
T^{\ast}(M/G)\ar[r]^{\tau_{M/G}}&M/G}.
\]

We then have
\begin{proposition}
$\tilde{M}$ is a fiber bundle over $T^{\ast}(M/G)$ whose fibers are the $G$-orbits
of $M$. Furthermore, $\tilde{M}$ is a $G$-space with single orbit type and as a bundle over $M$ is
bundle isomorphic to the zero momentum space, $\mathbf{J}^{-1}(0)\simeq V(M)^\circ$ where
$V(M)^\circ$ is the annihilator of the vertical fibers in $M\rightarrow M/G$. Then, $\tilde{M}$ naturally inherits a singular connection form given by $\tilde{A}:=\tilde{\tau}^{\ast}\mathcal{A}:T\tilde{M}\rightarrow \nu$.
 \end{proposition}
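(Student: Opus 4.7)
My plan is to verify the four assertions in the proposition in turn, each of which is a fairly standard pullback construction once the correct $G$-equivariance is accounted for.

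First, I would establish the fiber bundle structure of $\tilde{M}\to T^*(M/G)$ directly from pullback bundle theory. Since $M=M_{(H)}$, the orbit map $\pi:M\to M/G$ is a locally trivial fiber bundle (associated to the principal $N(H)/H$-bundle $M_H\to M_H/(N(H)/H)$ from Section 2.1), and its pullback along $\tau_{M/G}$ inherits the same local trivializations. The fiber of $\tilde{\tau}:\tilde{M}\to M$ over $\alpha_{[m]}$ is canonically $\pi^{-1}([m])=G\cdot m$. For the $G$-action $g\cdot(\alpha_{[m]},m):=(\alpha_{[m]},g\cdot m)$, the stabilizer of $(\alpha_{[m]},m)$ is exactly $G_m$, which is conjugate to $H$ by assumption, so $\tilde{M}$ has single orbit type $(H)$ as claimed.

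Next I would construct the bundle isomorphism over $M$ with $V(M)^\circ$. Define $\Psi:\tilde{M}\to V(M)^\circ$ by $\Psi(\alpha_{[m]},m):=T^*_m\pi(\alpha_{[m]})=\alpha_{[m]}\circ T_m\pi$. This lands in $V(M)^\circ$ because for any $\xi\in\g$,
\begin{equation*}
\langle T^*_m\pi(\alpha_{[m]}),\xi_M(m)\rangle=\langle\alpha_{[m]},T_m\pi(\xi_M(m))\rangle=0,
\end{equation*}
since infinitesimal generators are vertical. Fiberwise, $\Psi$ is the transpose of the surjection $T_m\pi:T_mM\to T_{[m]}(M/G)$ with kernel $V(M)_m$, so it is a linear isomorphism onto $V(M)^\circ_m$; smoothness follows from smoothness of $\pi$. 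The identification $V(M)^\circ\simeq\mathbf{J}^{-1}(0)$ is immediate from the defining formula $\langle\mathbf{J}(\alpha_m),\xi\rangle=\langle\alpha_m,\xi_M(m)\rangle$, since this pairing vanishes for all $\xi$ precisely when $\alpha_m$ annihilates the vertical fibers.

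Finally, for the pullback connection, I would interpret $\tilde{A}:=\tilde{\tau}^*\mathcal{A}$ as $\tilde{A}(w_{\tilde{m}}):=\mathcal{A}(T_{\tilde{m}}\tilde{\tau}(w_{\tilde{m}}))\in\nu_{\tilde{\tau}(\tilde{m})}$, using that the isotropy at $\tilde{m}$ coincides with that at $\tilde{\tau}(\tilde{m})$ so that $\tilde{\tau}^*\nu$ is naturally identified with $\nu$ on appropriate fibers. Verifying the defining axioms reduces to two short checks: $G$-equivariance of $\tilde{A}$ follows from the $G$-equivariance of both $\tilde{\tau}$ and $\mathcal{A}$; and since $\tilde{\tau}$ is equivariant, $T\tilde{\tau}(\xi_{\tilde{M}}(\tilde{m}))=\xi_M(\tilde{\tau}(\tilde{m}))$, so $\tilde{A}(\xi_{\tilde{M}}(\tilde{m}))=\mathcal{A}(\xi_M(\tilde{\tau}(\tilde{m})))=[\xi]_{\tilde{\tau}(\tilde{m})}$, which in turn forces surjectivity onto the fibers of $\nu$.

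The only subtlety — and what I expect to be the main bookkeeping obstacle rather than a genuine mathematical difficulty — is keeping consistent the various identifications: between $\tilde{\tau}^*\nu$ and $\nu$, between the $G$-orbits in $M$ and the fibers of $\pi^\sharp$, and between $\mathbf{J}^{-1}(0)$ and $V(M)^\circ$. Once these are set up compatibly, each individual verification is a one-line consequence of the construction.
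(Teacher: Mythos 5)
Your proposal is correct and follows essentially the same route as the paper: identify the fibers of the pullback, observe that the stabilizer of $(\alpha_{[m]},m)$ is $G_m$ to get single orbit type, map into $V(M)^\circ$ via $(T_m\pi)^{\ast}$, and verify the two connection axioms for $\tilde{\tau}^{\ast}\mathcal{A}$ using equivariance of $\tilde{\tau}$. The only cosmetic difference is that you identify the image of $(T_m\pi)^{\ast}$ as the annihilator of $\ker T_m\pi$ directly, where the paper uses fiberwise injectivity plus a dimension count; both are fine.
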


\begin{proof}
By definition, $\tilde{M}=\{(\alpha_{[m]},m)\,:\, \pi(m)=\tau_{M/G}(\alpha_{[m]})=[m]\}$, and therefore, for
$\alpha_{[m]}\in T^{\ast}_{[m]}(M/G)$, $(\pi^{\sharp})^{-1}(\alpha_{[m]})=\{(\alpha_{[m]},m')\,:\,m'\in G\cdot m\}\simeq G\cdot m$ where $m$ is any representative
of $[m]$. Similarly, the fibers of $\tilde{\tau}$ satisfy $\tilde{\tau}^{-1}(m)\simeq T^{\ast}_{[m]}(M/G)$. Notice that $\tilde{M}$ inherits a $G$-action defined by $g\cdot (\alpha_{[m]},m)=(\alpha_{[m]},g\cdot m)$, clearly well defined, smooth, and proper since the action of $G$ on $M$ is so. Furthermore, since
$G_{(\alpha_{[m]},m)}=G_{m}$ it follows that $\tilde{M}$ is a single orbit type manifold with orbit type $(H)$, the same as for $M$, and therefore the quotient of $\tilde{M}$ by this action is a smooth manifold. By inspection this quotient is simply $T^{\ast}(M/G)$.
Consider the map $\psi:\tilde{M}\rightarrow T^{\ast}M$ given by $(\alpha_{[m]},m)\mapsto (T_{m}\pi)^{\ast}\alpha_{[m]}$. This map takes the fiber of $\tilde{M}$ over $m$ into $T^{\ast}_{m}M$ and takes values in $V(M)^\circ$ since $\left\langle (T_{m}\pi)^{\ast}\alpha_{[m]},\xi_{M}(m)\right\rangle=
\left\langle \alpha_{[m]},T_{m}\pi(\xi_{M}(m))\right\rangle =0$. Since the map $\psi$ restricted to $\tilde{\tau}^{-1}(m)$ is injective (being the dual to the surjective map $T_{m}\pi:T_{m}M\rightarrow T_{[m]}(M/G)$),
and since $\mathrm{dim}\,V(M)^\circ=\mathrm{dim}\,M-\mathrm{dim}\,G+\mathrm{dim}\,H=\mathrm{dim}\,(M/G)$ it follows that $\psi|_{\tilde{\tau}^{-1}(m)}:T^{\ast}_{\alpha_{[m]}}(M/G)\rightarrow V(M)^\circ_{m}$ is an isomorphism and therefore $\psi:\tilde{M}\rightarrow V(M)^\circ$ is a bundle isomorphism covering the identity on $M$. Finally, consider the map $\tilde{\tau}^{\ast}\mathcal{A}:T\tilde{M}\rightarrow \nu$. First notice that $\nu$ is the correct target bundle for a singular connection on $\tilde{M}$ since it is a single orbit type manifold with
isotropy $(H)$. Also, we have for each $\xi\in \mathfrak{g}$,
\begin{equation*}
\tilde{\tau}^{\ast}\mathcal{A}(\xi_{\tilde{M}}(\alpha_{[m]},m))=\mathcal{A}(\xi_M(m))=[\xi]_m.
\end{equation*}
Equivariance follows from equivariance of the projection $\tilde{\tau}$ and equivariance of $\mathcal{A}$.
\end{proof}

Denote by $\tilde M\times\nu^*$ the corresponding product bundle
over $M$. We will use the following notation for bundle projections,
$\pi_{\nu^{\ast}}:\tilde{M}\times \nu^{\ast}\rightarrow \nu^{\ast}$
and $\pi_{\tilde{M}}:\tilde{M}\times \nu^{\ast}\rightarrow
\tilde{M}$. Note then, that
$\pi^{\sharp}\circ\pi_{\tilde{M}}:\tilde{M}\times\nu^{\ast}\rightarrow
T^{\ast}(M/G)$ which we abbreviate, below, as simply $\pi^{\sharp}$
to economize notation.

\begin{remark}
Recall that, by definition, the momentum map for the $G$-action on $M$ is a map, $\mathbf{J}:T^{\ast}M\rightarrow \g^{\ast}$. However, since the image of $\mathbf{J}|_{T^{\ast}_mM}$ is
$\g_{m}^{\circ}\simeq \nu_{m}^{\ast}$, we can also regard $\mathbf{J}$ as a bundle map $T^{\ast}M\rightarrow \nu^{\ast}$ over $M$. In the following we will sometimes implicitly use this point of view.
\end{remark}

\begin{proposition}
\label{proposition phi_A} Associated to the singular connection
$\mathcal{A}$ there is an equivariant bundle isomorphism,
$\phi_{\mathcal{A}}:\tilde{M}\times \nu^{\ast}\rightarrow T^{\ast}M$
covering the identity on $M$ given by
\begin{equation*}
\phi_{\mathcal{A}}((\alpha_{[m]},m),\nu_m)=(T_{m}\pi)^{\ast}\alpha_{[m]}+\mathcal{A}^{\ast}(m)\nu_m.
\end{equation*}
Furthermore, the pull back of the canonical one-form on $T^{\ast}M$ by $\phi_{\mathcal{A}}$ is given by,
\begin{equation}
\label{pull back theta}
\phi_{\mathcal{A}}^{\ast}\Theta_{M}=(\pi^{\sharp})^{\ast}\Theta_{M/G}+\left\langle \mathbf{J}\circ\phi_{\mathcal{A}},\tilde{A}\right\rangle,
\end{equation}
where $\left\langle\cdot,\cdot\right\rangle$ indicates the natural pairing on the fibers of $\nu^{\ast}$ with
the corresponding fibers on $\nu$, $$\mathbf{J}\circ\phi_{\mathcal{A}}:\tilde{M}\times\nu^{\ast}\rightarrow \nu^{\ast}$$ is just the bundle projection so that $\left\langle\mathbf{J}\circ\phi_{\mathcal{A}},\tilde{\mathcal{A}}\right\rangle$ is the one-form on $\tilde{M}\times \nu^{\ast}$ given by
\begin{equation*}
\left\langle
\mathbf{J}\circ\phi_{\mathcal{A}},\tilde{\mathcal{A}}\right\rangle(v_{\lambda})=
\left\langle \nu_m,\mathcal{A}(m)(T_{\pi_{\tilde M}(\lambda)}\tilde
\tau(T_\lambda\pi_{\tilde M}(v_{\lambda}))\right\rangle,
\end{equation*}
with $\lambda=((\alpha_{[m]},m),\nu_m)$.
\end{proposition}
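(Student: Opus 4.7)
The plan for part one is to exploit the singular-connection splitting $T_m M = H(M)_m \oplus V(M)_m$, which dualizes to $T_m^{\ast}M = H(M)_m^{\circ} \oplus V(M)_m^{\circ}$, together with the facts that $(T_m\pi)^\ast : T_{[m]}^\ast(M/G) \to T_m^\ast M$ has image $V(M)_m^{\circ}$ (it annihilates exactly the vertical vectors, and it is injective because $T_m\pi$ is surjective onto $T_{[m]}(M/G)$) and that $\mathcal{A}(m)^\ast : \nu_m^\ast \to T_m^\ast M$ has image $H(M)_m^\circ$ and is injective (because $H(M)_m = \ker \mathcal{A}(m)$ and $\mathcal{A}(m)$ is surjective onto $\nu_m$). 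Adding these two pieces thus lands bijectively onto $T_m^\ast M$ at each $m$, and a dimension check confirms matters: the fiber of $\tilde M \times \nu^\ast$ at $m$ has dimension $\dim(M/G) + \dim(\mathfrak{g}/\mathfrak{g}_m) = \dim M = \dim T_m^\ast M$. Smoothness follows from smoothness of $\mathcal{A}^\ast$ and the pull-back construction of $\tilde M$, and $G$-equivariance of $\phi_{\mathcal{A}}$ reduces to three facts already at hand: $\pi \circ \varphi_g = \pi$ (so $(T_m\pi)^\ast$ intertwines with the cotangent lift), the equivariance $\mathcal{A}(g \cdot v_m) = g \cdot \mathcal{A}(v_m)$ of the singular connection, and the definition of the $G$-action on $\nu^\ast$ as the dual of its action on $\nu$.

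The plan for the pull-back formula is a direct computation from the definition of the canonical one-form $\Theta_M(\beta)(w) = \langle \beta, T\tau_M(w)\rangle$. Setting $\sigma := \tilde\tau \circ \pi_{\tilde M} : \tilde M \times \nu^\ast \to M$, the relation $\tau_M \circ \phi_{\mathcal{A}} = \sigma$ holds because $\phi_{\mathcal{A}}$ covers the identity on $M$. For $v_\lambda \in T_\lambda(\tilde M \times \nu^\ast)$ with $\lambda = ((\alpha_{[m]},m),\nu_m)$, this yields
\[
\phi_{\mathcal{A}}^\ast \Theta_M (v_\lambda) = \bigl\langle (T_m\pi)^\ast \alpha_{[m]} + \mathcal{A}(m)^\ast \nu_m ,\, T_\lambda\sigma (v_\lambda) \bigr\rangle,
\]
which I will split into two pieces. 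For the first piece, commutativity of the defining pull-back square $\pi \circ \tilde\tau = \tau_{M/G}\circ \pi^{\sharp}$ gives $T\pi \circ T\sigma = T\tau_{M/G} \circ T(\pi^{\sharp}\circ \pi_{\tilde M})$, so the piece $\langle \alpha_{[m]}, T_m\pi(T_\lambda\sigma(v_\lambda))\rangle$ is exactly $(\pi^{\sharp})^\ast\Theta_{M/G}(v_\lambda)$ under the paper's notational convention that collapses $\pi^\sharp \circ \pi_{\tilde M}$ to $\pi^\sharp$. For the second piece, I will first verify the identity $\mathbf{J}\circ \phi_{\mathcal{A}}(\lambda)= \nu_m$ as an element of $\nu_m^\ast$: pairing $\phi_{\mathcal{A}}(\lambda)$ with $\xi_M(m)$ kills the $(T_m\pi)^\ast\alpha_{[m]}$ contribution (since $\xi_M(m)\in \ker T_m\pi$) and reduces the $\mathcal{A}(m)^\ast\nu_m$ contribution to $\langle\nu_m,[\xi]_m\rangle$ by definition of the singular connection. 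Combined with $T\sigma = T\tilde\tau\circ T\pi_{\tilde M}$, this identifies the second piece as $\langle \mathbf{J}\circ\phi_{\mathcal{A}},\tilde{\mathcal{A}}\rangle(v_\lambda)$ exactly as displayed.

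The substantive geometric content is the splitting of $T^\ast M$ induced by $\mathcal{A}$ in part one and the momentum-map identification in part two; neither involves any essential difficulty beyond the material developed in Section 4. The main expository obstacle is keeping the tower of projections $\pi$, $\pi^\sharp$, $\tilde\tau$, $\pi_{\tilde M}$ and their tangent maps consistent and clearly labelled, and in particular deploying the abuse of notation $\pi^\sharp \equiv \pi^\sharp \circ \pi_{\tilde M}$ without ambiguity. No ingredient beyond the properties of the singular connection and the pull-back description of $\tilde M$ is required.
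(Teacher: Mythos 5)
Your proposal is correct and follows essentially the same route as the paper's proof: equivariance read off from the equivariance of $\mathcal{A}$ and the definition of the $G$-action on $\nu^{\ast}$, the identification of $\mathbf{J}\circ\phi_{\mathcal{A}}$ with the projection $\pi_{\nu^{\ast}}$ by pairing $\phi_{\mathcal{A}}(\lambda)$ with $\xi_M(m)$, and the splitting of $\phi_{\mathcal{A}}^{\ast}\Theta_M$ into the two displayed terms using $\tau_M\circ\phi_{\mathcal{A}}=\tilde{\tau}\circ\pi_{\tilde M}$ and $\pi\circ\tilde{\tau}=\tau_{M/G}\circ\pi^{\sharp}$. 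Your explicit fiberwise argument that $(T_m\pi)^{\ast}$ and $\mathcal{A}(m)^{\ast}$ map injectively onto the complementary subspaces $V(M)_m^{\circ}$ and $H(M)_m^{\circ}$ fills in a detail the paper leaves implicit, but it is not a different method.
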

\begin{proof}
Equivariance of $\phi_{\mathcal{A}}$ follows immediately from the equivariance of the singular connection $\mathcal{A}$ and the definition of the $G$-action on $\tilde{M}\times\nu^\ast$.

Next, observe that for any
$\xi\in \mathfrak{g}$ we have
\begin{equation*}
\left\langle\mathbf{J}\circ\phi_{\mathcal{A}}((\alpha_{[m]},m),\nu_m),\xi\right\rangle
=\left\langle (T_{m}\pi)^{\ast}\alpha_{[m]}+\mathcal{A}^{\ast}(m)\nu_{m},\xi_{M}(m)\right\rangle
=\left\langle \nu_m,[\xi]_m\right\rangle=\left\langle \nu,\xi\right\rangle,
\end{equation*}
where $\nu\in (\mathfrak{g}_m)^\circ$
and therefore $\mathbf{J}\circ\phi_{\A}$, restricted to the fiber over $m$ takes values in the fiber
of $\nu$ over $m$, and is surjective on this fiber since it is the dual of the injective
map $[\xi]_{m}\mapsto \xi_{M}(m)$.
Let $v_{\lambda}\in T_{\lambda}(\tilde{M}\times \nu^{\ast})$ with $\lambda=((\alpha_{[m]},m),\nu_m)$. We have

\begin{align*}
(\phi_{\A}^{\ast}\Theta_{M})(v_{\lambda})&=\Theta_{M}(T_{\lambda}\phi_{\A}(v_\lambda))\\
&=\left\langle (T_m\pi)^\ast\alpha_{[m]}+\A^{\ast}(m)\nu_m,T_{\phi_{\A}(\lambda)}\tau_M\circ T_{\lambda}\phi_{\A}(v_{\lambda})\right\rangle\\
&=\left\langle \alpha_{[m]},T_{\lambda}(\pi\circ\tau_{M}\circ\phi_{\A})(v_{\lambda})\right\rangle +\left\langle \nu_m,\A(m)(T_{\lambda}(\tau_{M}\circ\phi_{\A})(v_{\lambda})\right\rangle\\
&=(\pi^{\sharp})^{\ast}\Theta_{M/G}(v_{\lambda})+\left\langle \mathbf{J}\circ\phi_{\A},\tA \right\rangle(v_{\lambda})
\end{align*}
using the facts that
$\pi\circ\tau_M\circ\phi_{\A}=\tau_{M/G}\circ\pi^{\sharp}$ and
$\tau_M\circ\phi_{\A}=\tilde{\tau}\circ\pi_{\tilde M}$ in the third
equality.
\end{proof}

In the next two theorems, we prove that the minimal coupling form due to Sternberg generalizes to the singular setting. Care must be taken to prove the generalization since we need to deal with a fiber product bundle, $\tilde{M}\times\mathcal{\tilde O}$ and not just the product of manifolds as in the free case. The proof of the extension to the singular setting will make repeated use of the fact that the bundles over $M$, $\tilde{M}$ and $\mathbf{J}^{-1}(\Orbit)$, are each  $G$-saturated fiber bundles over $M_H$.

\begin{theorem}
Let $\Orbit$ be a coadjoint orbit through a point in the image of $\mathbf{J}:T^{\ast}M\rightarrow \g^{\ast}$.
The map $\phi_{\mathcal{A}}$ restricts to  an equivariant
stratified bundle isomorphism,
$\mathbf{J}^{-1}(\mathcal{O})\rightarrow \tilde{M}\times \tilde{\Orbit}$ where $\tilde{M}\times \tilde{\Orbit}$ is the fiber product bundle over $M$, and $\tilde{\Orbit}\subset \nu^{\ast}$ is a sub-bundle with fiber over $m\in M_H$  given by $\Orbit\cap \mathfrak{h}^\circ$. The orbit type strata of $\tilde{M}\times\tilde{\Orbit}$ are $\tilde{M}\times \tilde{\mathcal{O}}_{(K)}$ where $K$ are the subgroups of $H$
determined by the isotropy lattice for the action $H\times \mathfrak{h}^\circ\rightarrow \mathfrak{h}^\circ$ and $(K)$ denote their conjugacy classes in $G$.
Denote by $\iota_{(K)}:\tilde{M}\times\tilde{\Orbit}_{(K)}\hookrightarrow \tilde{M}\times \nu^{\ast}$ the inclusion.
The
restriction of the form $\phi_{\mathcal{A}}^{\ast}\omega_M$, the pull back of the canonical symplectic form on $T^{\ast}M$, to each stratum $\tilde{M}\times \tilde{\mathcal{O}}_{(K)}$
is given by
\begin{equation}
\label{restricted omega}
\iota_{(K)}^{\ast}\phi_{\mathcal{A}}^{\ast}\omega_M=(\pi^{\sharp})^{\ast}\omega_{M/G}-\mathbf{d}\left\langle\mathbf{J}_{\tilde{\mathcal{O}}_{(K)}},\tilde{\mathcal{A}}\right\rangle,
\end{equation}
where $\omega_{M/G}$ is the canonical symplectic form on $T^*(M/G)$ and  $\mathbf{J}_{\tilde{\mathcal{O}}_{(K)}}$ is the restriction of $\mathbf{J}\circ\phi_{\mathcal{A}}$ to
$\tilde{M}\times\tilde{\mathcal{O}}_{(K)}$. Define the two form on $\tilde{M}\times \tilde{\Orbit}_{(K)}$
\begin{equation}
\label{the basic form}
\omega^{\Orbit,(K)}:=(\pi^{\sharp})^{\ast}\omega_{M/G}-\mathbf{d}\left\langle\mathbf{J}_{\tilde{\mathcal{O}}_{(K)}},\tilde{\mathcal{A}}\right\rangle-\mathbf{J}_{\tilde{\mathcal{O}}_{(K)}}^{\ast}\omega_{\mathcal{O}}^{+},
\end{equation}
where $\omega_{M/G}$ is the canonical symplectic form on $T^{\ast}(M/G)$, and $\omega_{\mathcal{O}}^{+}$ is the $(+)$ orbit symplectic form
on $\mathcal{O}$.
The two-form $\omega^{\Orbit,(K)}$ satisfies the following:
\begin{itemize}
\item[(i)] It is basic, i.e. $G$-invariant and annihilates $G$-vertical vectors.
\item[(ii)] It drops to a unique two-form $\omega^{\Orbit,(K)}_\mathrm{min}$ on $\tilde{M}\times_{G}\tilde{\mathcal{O}}_{(K)}$.
\item[(iii)] Denoting by $\pi_{(K)}:\tilde{M}\times\tilde{\mathcal{O}}_{(K)}\rightarrow \tilde{M}\times_{G}\tilde{\mathcal{O}}_{(K)}$ the orbit map, the reduced form $\omega^{\Orbit,(K)}_\mathrm{min}$ on $\tilde{M}\times_{G}\tilde{\mathcal{O}}_{(K)}$
 (defined by $\pi_{(K)}^{\ast}\omega^{\Orbit,(K)}_\mathrm{min}=\omega^{\Orbit,(K)}$) satisfies
\begin{equation}
\label{reduced_form}
\iota_{(K)}^{\ast}\phi_\mathcal{A}^*\omega_M-\mathbf{J}_{\tilde{\Orbit}_{(K)}}^{\ast}\omega_{\mathcal{O}}^{+}=\pi_{(K)}^{\ast}\omega^{\Orbit,(K)}_\mathrm{min}.
\end{equation}

\end{itemize}

\end{theorem}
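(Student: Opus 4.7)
The argument splits naturally into four stages, the third being the crux.

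First, the bundle isomorphism. By the computation in Proposition~\ref{proposition phi_A}, the map $\mathbf{J}\circ\phi_{\mathcal{A}}:\tilde M\times\nu^{\ast}\to\nu^{\ast}\hookrightarrow M\times\mathfrak{g}^{\ast}$ is simply the projection onto the second factor (followed by the natural inclusion $\nu^{\ast}_{m}=\mathfrak{g}_{m}^{\circ}\hookrightarrow\mathfrak{g}^{\ast}$). Consequently $\phi_{\mathcal{A}}^{-1}(\mathbf{J}^{-1}(\mathcal{O}))$ is exactly the locus where the $\nu^{\ast}$-component lies in $\mathcal{O}\cap\mathfrak{g}_{m}^{\circ}$; over $m\in M_{H}$ this is $\mathcal{O}\cap\mathfrak{h}^{\circ}$, and since $\nu^{\ast}$ and $\tilde M$ are both $G$-saturated from their restrictions to $M_{H}$, this globally defines the sub-bundle $\tilde{\mathcal{O}}$. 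The diagonal $G$-action on $\tilde M\times\tilde{\mathcal{O}}$ has stabilizer $G_{m}\cap G_{\nu_{m}}^{\mathrm{coad}}$ at a point over $m$; over $M_{H}$ this reduces to $H\cap H_{\nu_{m}}^{\mathrm{coad}}$, so the orbit-type strata are indexed by the $H$-isotropy lattice of $\mathcal{O}\cap\mathfrak{h}^{\circ}\subset\mathfrak{h}^{\circ}$ analysed in Section~3, yielding the strata $\tilde M\times\tilde{\mathcal{O}}_{(K)}$.

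Second, equation \eqref{restricted omega} is obtained simply by applying $-\mathbf{d}$ to \eqref{pull back theta} (using $\omega_{M}=-\mathbf{d}\Theta_{M}$) and then pulling back via $\iota_{(K)}$; the one-form $\langle\mathbf{J}\circ\phi_{\mathcal{A}},\tilde{\mathcal{A}}\rangle$ pulls back to $\langle\mathbf{J}_{\tilde{\mathcal{O}}_{(K)}},\tilde{\mathcal{A}}\rangle$.

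Third (the main step), I verify property (i). Combining \eqref{restricted omega} with \eqref{the basic form} one obtains
\[
\omega^{\mathcal{O},(K)}=\iota_{(K)}^{\ast}\phi_{\mathcal{A}}^{\ast}\omega_{M}-\mathbf{J}_{\tilde{\mathcal{O}}_{(K)}}^{\ast}\omega_{\mathcal{O}}^{+}.
\]
$G$-invariance of both summands is immediate from equivariance of $\phi_{\mathcal{A}}$, $\iota_{(K)}$, and $\mathbf{J}_{\tilde{\mathcal{O}}_{(K)}}$, combined with $G$-invariance of $\omega_{M}$ and $\omega_{\mathcal{O}}^{+}$. For the annihilation of an infinitesimal generator $\xi\in\mathfrak{g}$, note that since $\phi_{\mathcal{A}}$ is an equivariant diffeomorphism the pulled-back form $\phi_{\mathcal{A}}^{\ast}\omega_{M}$ has momentum map $\mathbf{J}\circ\phi_{\mathcal{A}}$; equivalently,
\[
\iota_{\xi}\phi_{\mathcal{A}}^{\ast}\omega_{M}=\mathbf{d}\langle\mathbf{J}\circ\phi_{\mathcal{A}},\xi\rangle.
\]
On the other hand $\mathbf{J}_{\mathcal{O}}(\mu)=\mu$ is the momentum map of $\omega_{\mathcal{O}}^{+}$, so $\iota_{\xi_{\mathcal{O}}}\omega_{\mathcal{O}}^{+}=\mathbf{d}\langle\cdot,\xi\rangle$, and $G$-equivariance of $\mathbf{J}$ then yields
\[
\iota_{\xi}\mathbf{J}_{\tilde{\mathcal{O}}_{(K)}}^{\ast}\omega_{\mathcal{O}}^{+}=\mathbf{d}\langle\mathbf{J}_{\tilde{\mathcal{O}}_{(K)}},\xi\rangle.
\]
Restricted to the stratum the two expressions coincide and cancel in $\omega^{\mathcal{O},(K)}$, establishing basicity.

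Finally, each stratum $\tilde M\times\tilde{\mathcal{O}}_{(K)}$ has a single $G$-orbit type by the first step, so $\tilde M\times_{G}\tilde{\mathcal{O}}_{(K)}$ is a smooth manifold and any basic form descends uniquely to it, proving (ii); relation (iii) is then just the defining relation $\pi_{(K)}^{\ast}\omega^{\mathcal{O},(K)}_{\mathrm{min}}=\omega^{\mathcal{O},(K)}$ combined with \eqref{restricted omega}. The most delicate point is the momentum-map identity invoked in the basicity step: one must be careful that although $\phi_{\mathcal{A}}$ is only an equivariant bundle isomorphism and not a symplectomorphism, momentum maps do pull back along equivariant diffeomorphisms to momentum maps of the pulled-back two-form, so $\mathbf{J}\circ\phi_{\mathcal{A}}$ genuinely serves the required role for $\phi_{\mathcal{A}}^{\ast}\omega_{M}$.
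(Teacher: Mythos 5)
Your proposal is correct, and while the overall skeleton (identify $\tilde{M}\times\tilde{\Orbit}$ via the fiberwise description of $\mathbf{J}\circ\phi_{\mathcal{A}}$, obtain \eqref{restricted omega} by applying $-\mathbf{d}$ to \eqref{pull back theta}, then prove basicity and descend) matches the paper, your treatment of the crucial basicity step is genuinely different and more conceptual. The paper first checks $G$-invariance of each summand separately by a direct computation on $\left\langle\mathbf{J}\circ\phi_{\mathcal{A}},\tilde{\mathcal{A}}\right\rangle$, then kills vertical vectors by Cartan's formula together with an explicit parametrization of tangent vectors to $\tilde{\Orbit}_{(K)}$ as $-\operatorname{ad}^{\ast}_{\eta}\nu_m$ (using saturation over $M_H$), ending with the identity $\left\langle\nu_m,[\xi,\eta]\right\rangle=\left\langle\nu_m,[\xi,\eta]\right\rangle$. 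You instead rewrite $\omega^{\Orbit,(K)}=\iota_{(K)}^{\ast}\phi_{\mathcal{A}}^{\ast}\omega_M-\mathbf{J}_{\tilde{\Orbit}_{(K)}}^{\ast}\omega_{\mathcal{O}}^{+}$ and invoke two naturality facts: $\iota_{\xi}\phi_{\mathcal{A}}^{\ast}\omega_M=\mathbf{d}\left\langle\mathbf{J}\circ\phi_{\mathcal{A}},\xi\right\rangle$ (momentum maps transport along equivariant diffeomorphisms) and $\iota_{\xi_{\mathcal{O}}}\omega_{\mathcal{O}}^{+}=\mathbf{d}\left\langle\cdot,\xi\right\rangle$ (the identity is the momentum map of the $(+)$ KKS form), so the two contractions cancel on the nose; both invariance and annihilation of generators follow at once. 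I checked the sign conventions: with $\xi_{\mathcal{O}}(\mu)=-\operatorname{ad}^{\ast}_{\xi}\mu$ and $\omega^{+}(\operatorname{ad}^{\ast}_{\xi}\mu,\operatorname{ad}^{\ast}_{\eta}\mu)=\left\langle\mu,[\xi,\eta]\right\rangle$ your identities are right, and your cancellation reproduces exactly the paper's equation \eqref{vertical kill} modulo the manifestly basic term $(\pi^{\sharp})^{\ast}\omega_{M/G}$. What your route buys is brevity and transparency; what the paper's explicit computation buys is the intermediate formula $T_{\lambda}\mathbf{J}_{\tilde{\Orbit}_{(K)}}(v_{\lambda})=-\operatorname{ad}^{\ast}_{\eta}\nu_m$, which is reused in the proof of Theorem \ref{minimal coupling theorem}. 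Only stylistic caveats: you should state explicitly that each stratum $\tilde{M}\times\tilde{\Orbit}_{(K)}$ is a $G$-invariant submanifold so that the generators restrict and contraction commutes with $\iota_{(K)}^{\ast}$, and that $\mathbf{J}_{\tilde{\Orbit}_{(K)}}$ is equivariant into $\mathcal{O}$ with its coadjoint action (both are immediate from the setup).
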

\begin{proof}
First recall that $\tilde{M}$ is a single orbit type manifold with orbit type $(H)$ identical to that of $M$.
By definition, $\tilde{\Orbit}$ is the bundle over $M$ whose fiber over $m$ is $\nu_{m}^{\ast}\cap \Orbit$. It is clear that $(\mathbf{J}\circ\phi_{\A})^{-1}(\Orbit)=\tilde{M}\times\tilde{\Orbit}$ since the restriction of
$\mathbf{J}\circ \phi_{\A}$ to each fiber is given simply by $\pi_{\nu^{\ast}}$.
It follows that $\mathbf{J}_{\tilde{\Orbit}}((\alpha_{[m]},m),\nu_m)=\nu_m\in \Orbit\cap\mathfrak{g}_m^\circ\subset \Orbit$.

On the other hand, the bundle $\tilde{\Orbit}$ over $M$
has the structure of a stratified fiber bundle which also trivializes over $M_H$. Restricted to $M_H$ we have
\begin{equation*}
\tilde{ \Orbit}|_{M_H}=M_{H}\times  (\Orbit\cap \mathfrak{h}^\circ)
\end{equation*}
and furthermore $\tilde{ \Orbit}=G\cdot \tilde{ \Orbit}|_{M_H}$ since the fiber of $\tilde{O}$ over
the point $g\cdot m$ in $M$ is just $\Orbit_{\mu}\cap (\operatorname{Ad}_g\mathfrak{h})^\circ=\Orbit_{\mu}\cap g\cdot \nu^{\ast}_m$ according to the definition of the $G$-action on $\nu^{\ast}$.

Equation \eqref{restricted omega} follows by taking the restriction
of equation \eqref{pull back theta} of the previous Proposition
\ref{proposition phi_A} to the stratum $\tilde{M}\times
\tilde{\Orbit}_{(K)}$ and then taking the exterior derivative. For
$(i)$, note that $\phi_{\mathcal{A}}$ is $G$-equivariant, and
$\omega_{M}$ is $G$-invariant and therefore
$\phi_{\mathcal{A}}^{\ast}\omega_{M}=(\pi^{\sharp})^{\ast}\omega_{M/G}-\mathbf{d}\left\langle\mathbf{J}\circ\phi_{\mathcal{A}},\tilde{\mathcal{A}}\right\rangle$
is $G$-invariant. It follows that the pulled back form,
$\iota_{(K)}^{\ast}\phi_{\mathcal{A}}^{\ast}\omega_{M}$ to the
$G$-manifold $\tilde{M}\times\tilde{\Orbit}_{(K)}$, is also
$G$-invariant. In fact, each term in
$\phi_{\mathcal{A}}^{\ast}\omega_M$ is independently invariant. To
check this, let
$\lambda:=((\alpha_{[m]},m),\nu_m)\in\tilde{M}\times\tilde{\Orbit}_{(K)}$,
(so that $\nu_{m}\in \mathfrak{h}_{(K)}^\circ$), and $v_{\lambda}\in
T_{\lambda}(\tilde{M}\times\tilde{\Orbit}_{(K)})$. Then,
\begin{align*}
\left\langle \mathbf{J}\circ\phi_{\A},\tA\right\rangle(g\cdot v_{\lambda})&=\left\langle \pi_{\nu^{\ast}}(g\cdot\lambda),\tA(g\cdot v_{\lambda})\right\rangle=\left\langle g\cdot \nu_m,\A(g\cdot T_{\lambda}(\tilde{\tau}\circ\pi_{\tilde{M}})(v_{\lambda}))\right\rangle\\
&=\left\langle g\cdot \nu_m,g\cdot\A (T_{\lambda}(\tilde{\tau}\circ\pi_{\tilde{M}})(v_{\lambda}))\right\rangle\\
&=\left\langle \nu_m,\A (T_{\lambda}(\tilde{\tau}\circ\pi_{\tilde{M}})(v_{\lambda}))\right\rangle=\left\langle \mathbf{J}\circ\phi_{\A},\tA\right\rangle(v_{\lambda}),
\end{align*}
from which it follows that, by infinitesimal invariance,
\begin{equation}
\label{lie derivative}
0=\mathbf{L}_{{\xi}_{\tilde{M}\times\tilde{\Orbit}_{(K)}}}\left\langle \mathbf{J}_{\tilde{\Orbit}_{(K)}},\tA\right\rangle
=\iota_{{\xi}_{\tilde{M}\times\tilde{\Orbit}_{(K)}}}\mathbf{d}\left\langle \mathbf{J}_{\tilde{\Orbit}_{(K)}},\tA\right\rangle
+\mathbf{d}\iota_{{\xi}_{\tilde{M}\times\tilde{\Orbit}_{(K)}}}\left\langle\mathbf{J}_{\tilde{\Orbit}_{(K)}},\tA\right\rangle,
\end{equation}
where $\mathbf{L}_{X}$ denotes the Lie derivative, and equation
\eqref{lie derivative} follows from Cartan's magic formula. Now,
consider the form $\omega^{\Orbit,(K)}$. The sum of the first two
terms is simply $\iota_{(K)}^{\ast}\phi_{\mathcal{A}}^{\ast}\omega$
and each is $G$-invariant, and the first term,
$(\pi^{\sharp})^{\ast}\omega_{M/G}$ is basic. The third term of
$\omega^{\Orbit,(K)}$ is easily checked to also be $G$-invariant
using equivariance of $\mathbf{J}_{\tilde{\Orbit}_{(K)}}$ and
invariance of the orbit symplectic form $\omega_{\Orbit}^{+}$.
Therefore $\omega^{\Orbit,(K)}$ is $G$-invariant. To see that
$\omega^{\Orbit,(K)}$ annihilates vertical vectors we must show that
\begin{equation}
\label{vertical kill}
\iota_{\xi_{\tilde{M}\times\tilde{\Orbit}_{(K)}}}
\left(-\mathbf{d}\left\langle\mathbf{J}_{\tilde{\Orbit}_{(K)}},\tA\right\rangle-\mathbf{J}_{\tilde{\Orbit}_{(K)}}^{\ast}\omega_{\Orbit}^{+}\right)=0.
\end{equation}
From equation \eqref{lie derivative} we have
$$\iota_{\xi_{\tilde{M}\times\tilde{\Orbit}_{(K)}}}\left(-\mathbf{d}\left\langle\mathbf{J}_{\tilde{\Orbit}_{(K)}},\tA\right\rangle\right)=\mathbf{d}\iota_{\xi_{\tilde{M}\times\tilde{\Orbit}_{(K)}}}\left\langle\mathbf{J}_{\tilde{\Orbit}_{(K)}},\tA\right\rangle=\mathbf{d}\left\langle\mathbf{J}_{\tilde{\Orbit}_{(K)}},[\xi]_\nu\right\rangle,
$$
where $[\xi]_\nu$ is the section of $\nu$ given by
$[\xi]_\nu(m)=[\xi]_m\in\g/{\g_m}$. We have,
\begin{equation*}
\xi_{\tilde{M}\times\tilde{\Orbit}_{(K)}}(\lambda)=\ddt ((\alpha_{[m]},\exp (t\xi)\cdot m),\operatorname{Ad}^{\ast}_{\exp (-t\xi)}\nu_m)
\end{equation*}
and therefore,
\begin{equation*}
T_{\lambda}\mathbf{J}_{\tilde{\Orbit}_{(K)}}\cdot \xi_{\tilde{M}\times\tilde{\Orbit}_{(K)}}(\lambda)=\ddt \operatorname{Ad}^{\ast}_{\exp (-t\xi)}\nu_m=-\operatorname{ad}^{\ast}_{\xi}\nu_m.
\end{equation*}
Furthermore, $T_{\lambda}\mathbf{J}_{\tilde{\Orbit}_{(K)}}$ applied to any tangent vector $v_{\lambda}\in T_{\lambda}(\tilde{M}\times\tilde{\Orbit}_{(K)})$ must be of the form
$-\operatorname{ad}^{\ast}_{\eta}\nu_m$ since the bundle $\tilde{\Orbit}_{(K)}=G\cdot (M_{H}\times\Orbit\cap \mathfrak{h}^\circ_{(K)})$ trivializes and saturates over $M_H$ and therefore the most general curve passing through $\lambda$ has the form
$t\mapsto ((\alpha_{[m]}(t),m(t)),\nu(t))$ where $\nu(t)=g(t)\cdot \nu_m$ for some curve $g(t)\in G$ through the identity. Therefore,
\begin{align*}
T_{\lambda}\mathbf{J}_{\tilde{\Orbit}_{(K)}}(v_{\lambda})&=\ddt \mathbf{J}_{\tilde{\Orbit}_{(K)}}((\alpha_{[m]}(t),m(t)),g(t)\cdot \nu_{m})\\
&=\ddt g(t)\cdot \nu_m=-\operatorname{ad}^{\ast}_{\eta}\nu_m
\end{align*}
where $\eta:=\ddt g(t)$.
Using this, we have
\begin{align*}
\iota_{\xi_{\tilde{M}\times\tilde{\Orbit}_{(K)}}}(\mathbf{J}_{\tilde{\Orbit}_{(K)}}^{\ast}\omega_\Orbit^{+})(v_{\lambda})&=
(\mathbf{J}_{\tilde{\Orbit}_{(K)}}^{\ast}\omega_\Orbit^{+})(\xi_{\tilde{M}\times\tilde{\Orbit}_{(K)}}(\lambda),v_{\lambda})\\
&=\omega_{\Orbit}^{+}(-\operatorname{ad}^{\ast}_{\xi}\nu_m,-\operatorname{ad}^{\ast}_{\eta}\nu_m)\\
&=\left\langle\nu_m,[\xi,\eta]\right\rangle.
\end{align*}
On the other hand,
\begin{align*}
\mathbf{d}\left\langle \mathbf{J}_{\tilde{\Orbit}_{(K)}},[\xi]\right\rangle(v_{\lambda})&=
\left\langle T_{\lambda}\mathbf{J}_{\tilde{\Orbit}_{(K)}}( v_\lambda),[\xi]\right\rangle\\
&=\left\langle-\operatorname{ad}^{\ast}_{\eta}\nu_m,\xi\right\rangle=\left\langle\nu_m,[\xi,\eta]\right\rangle
\end{align*}

so that equation \eqref{vertical kill} is satisfied. Consequently,
$\omega^{\Orbit,(K)}$ drops to a unique form
$\omega^{\Orbit,(K)}_\mathrm{min}$ on
$\tilde{M}\times_{G}\tilde{\Orbit}_{(K)}$, and this form satisfies
equation \eqref{reduced_form} since it is  the unique orbit reduced
form by construction.
\end{proof}

\begin{remark}
The above theorem generalizes the minimal coupling constructions for
the regular case given in Section 2 as follows. The form
$(\pi^{\sharp})^{\ast}\omega_{M/G}-\mathbf{d}\left\langle\mathbf{J}_{\tilde{\mathcal{O}}_{(K)}},\tilde{\mathcal{A}}\right\rangle$
is the singular generalization of $\tilde{\omega}_{\mathcal{O}}^{-}$
given in equation \eqref{tilde omega minus}. The form
$\omega^{\Orbit,(K)}_\mathrm{min}$ is the singular generalization of
the minimal coupling form $\omega_{\mathrm{min}}^{\Orbit}$ in
equation \eqref{definition minimal coupling form}.
\end{remark}

We then have the following result on the reduced symplectic form
$\omega^{\Orbit,(K)}_\mathrm{min}$ determined on each
$\tilde{M}\times_{G} \tilde{\Orbit}_{(K)}$ coupling the canonical
symplectic structure with the homenegous Kostant-Kirillov form on
the orbit fibers via the reduced curvature $\mathcal{B}$
 of the singular connection.

The involved mappings are summarized in the following diagram

\[
\xymatrix{
\tilde{M}\times\tilde{\Orbit}_{(K)}\ar[r]^{\pi_{\tilde M}}\ar[dd]_{\pi_{(K)}}&\tilde{M}\ar[r]^{\tilde{\tau}}\ar[dd]_{\pi^{\sharp}}&M\ar[dd]^{\pi}\\
\\
\tilde{M}\times_{G}\tilde{\Orbit}_{(K)}\ar[r]_{p}&T^{\ast}(M/G)\ar[r]_{\tau_{M/G}}&M/G.}
\]

We will proceed as before to write $\pi^\sharp=\pi^\sharp\circ
\pi_{\tilde M}$ in order to economize notation.

\begin{theorem}
\label{minimal coupling theorem} The reduced minimal coupling form
$\omega^{\Orbit,(K)}_\mathrm{min}$ on
$\tilde{M}\times_G\tilde{\mathcal{O}}_{(K)}$ can  be expressed in
terms of the reduced curvature, $\mathcal{B}$, of $\mathcal{A}$ as
follows.
\begin{equation*}
\omega^{\Orbit,(K)}_\mathrm{min}=(\pi^{\sharp})^{\ast}\omega_{M/G}-\left\langle
\tilde{\Phi},(\tau_{M/G}\circ
p)^{\ast}\mathcal{B}\right\rangle+\tilde{\omega}_{(K)}
\end{equation*}
where $p:\tilde{M}\times_{G}\tilde{\Orbit}:\rightarrow T^{\ast}(M/G)$ is the submersion given by $p([(\alpha_{[m]},m),\nu_m])= \alpha_{[m]}$ and
$\tilde{\Phi}([(\alpha_{[m]},m),\nu_m]):=[m,\nu_m]\in \tilde{\Orbit}_{(K)}$.

The two-form $\tilde{\omega}_{(K)}$ is
equivalent at each point to the homogeneous reduced symplectic form defined in \eqref{stratifiedKK}.

\end{theorem}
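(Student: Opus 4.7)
The plan is to start from the identity
$$\pi_{(K)}^{\ast}\omega^{\Orbit,(K)}_\mathrm{min} = (\pi^{\sharp})^{\ast}\omega_{M/G} - \mathbf{d}\left\langle\mathbf{J}_{\tilde{\Orbit}_{(K)}}, \tilde{\mathcal{A}}\right\rangle - \mathbf{J}_{\tilde{\Orbit}_{(K)}}^{\ast}\omega_{\mathcal{O}}^{+},$$
obtained by combining equations \eqref{restricted omega} and \eqref{reduced_form}, and to show that after descent through the orbit projection $\pi_{(K)}$ the last two terms produce the curvature coupling $-\langle\tilde{\Phi},(\tau_{M/G}\circ p)^{\ast}\mathcal{B}\rangle$ plus the fiberwise homogeneous KKS form. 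Since $\pi_{(K)}$ is a surjective submersion (being the orbit map of the proper action of $G$ on the single-orbit-type stratum $\tilde{M}\times\tilde{\Orbit}_{(K)}$), equality of the pulled-back forms suffices.

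The first key step is to decompose $\mathbf{d}\langle\mathbf{J}_{\tilde{\Orbit}_{(K)}}, \tilde{\mathcal{A}}\rangle$ according to whether its arguments are $\tilde{\mathcal{A}}$-horizontal lifts from $T^{\ast}(M/G)$, $G$-orbit vectors, or vectors in the fiber direction of $\tilde{\Orbit}_{(K)}$. On two horizontal lifts $X, Y$ of vector fields on $T^{\ast}(M/G)$ (extended as zero in the orbit-fiber direction), the identities $\tilde{\mathcal{A}}(X) = \tilde{\mathcal{A}}(Y) = 0$ together with Definition \ref{definition curvature} give
$$\mathbf{d}\left\langle\mathbf{J}_{\tilde{\Orbit}_{(K)}}, \tilde{\mathcal{A}}\right\rangle(X, Y) = -\left\langle\mathbf{J}_{\tilde{\Orbit}_{(K)}}, \tilde{\mathcal{A}}([X, Y])\right\rangle = \left\langle\mathbf{J}_{\tilde{\Orbit}_{(K)}}, B(X, Y)\right\rangle,$$
while $\mathbf{J}_{\tilde{\Orbit}_{(K)}}^{\ast}\omega_{\mathcal{O}}^{+}$ vanishes on such lifts since they have zero orbit-fiber component. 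By Proposition \ref{proposition curvature}, $B$ is $G$-equivariant, takes values in $\nu_{(H)}$, and descends to the reduced curvature $\mathcal{B}$; together with the fact that $\mathbf{J}_{\tilde{\Orbit}_{(K)}}$ descends to $\tilde{\Phi}$, this contribution matches the $\pi_{(K)}$-pullback of $\langle\tilde{\Phi},(\tau_{M/G}\circ p)^{\ast}\mathcal{B}\rangle$.

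For the remaining directions, I would invoke \eqref{lie derivative} and \eqref{vertical kill} to conclude that the combination $-\mathbf{d}\langle\mathbf{J}_{\tilde{\Orbit}_{(K)}}, \tilde{\mathcal{A}}\rangle - \mathbf{J}_{\tilde{\Orbit}_{(K)}}^{\ast}\omega_{\mathcal{O}}^{+}$ annihilates $G$-vertical vectors, so that any remaining content lies on orbit-fiber directions within $\tilde{\Orbit}_{(K)}$. Evaluating on such directions and using the computation $T\mathbf{J}_{\tilde{\Orbit}_{(K)}}(\xi_{\tilde{M}\times\tilde{\Orbit}_{(K)}}) = -\mathrm{ad}^{\ast}_{\xi}\nu_m$ from the preceding theorem produces pairings of the form $\langle\nu_m, [\xi,\eta]\rangle$. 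With the sign relation $-\omega_{\mathcal{O}}^{+}(\cdot,\cdot) = \omega_{\mathcal{O}}^{-}(\cdot,\cdot)$, these reduce to the pullback of $\iota^{\ast}\omega_{\mathcal{O}}^{-}$ where $\iota:\Orbit\cap\mathfrak{h}^\circ_{(K)}\hookrightarrow \Orbit$. Descending to the $H$-quotient on each fiber of $p$, each isomorphic to $(\Orbit\cap\mathfrak{h}^\circ_{(K)})/H$, and comparing with \eqref{stratifiedKK}, the fiberwise restriction is precisely the homogeneous reduced KKS form $\omega_{(K)}$. This furnishes the definition of $\tilde{\omega}_{(K)}$ and confirms its fiberwise equivalence with the homogeneous reduced symplectic form.

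The main obstacle I anticipate is the mixed-input bookkeeping and the careful descent to the quotient $\tilde{M}\times_G \tilde{\Orbit}_{(K)}$: one must verify that $\langle\tilde{\Phi},(\tau_{M/G}\circ p)^{\ast}\mathcal{B}\rangle$ is well-defined despite both factors living in associated bundles to $G$-quotients, that cross terms involving one $G$-orbit direction and one orbit-fiber direction combine cleanly, and that the orbit-fiber piece assembles into a global two-form $\tilde{\omega}_{(K)}$ rather than merely a fiberwise one. The equivariance properties of $\tilde{\mathcal{A}}$, $\mathbf{J}_{\tilde{\Orbit}_{(K)}}$ and $B$, together with the $G$-saturation of $\tilde{\Orbit}_{(K)}$ over $M_H$ used repeatedly in the previous theorem, are the principal tools for these verifications.
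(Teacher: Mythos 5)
Your proposal is correct and follows essentially the same route as the paper: both prove the identity by pulling back along $\pi_{(K)}$ and checking it separately on horizontal lifts, $G$-orbit directions (where equation \eqref{vertical kill} kills both sides), and fiber directions of $\tilde{\Orbit}_{(K)}$, with the horizontal--horizontal case reducing to the curvature identity $\mathbf{d}\langle\mathbf{J}_{\tilde{\Orbit}_{(K)}},\tilde{\mathcal{A}}\rangle(X,Y)=\langle\nu_m,B(u_1,u_2)\rangle$ and the fiber--fiber case producing the homogeneous KKS pairing $\langle\nu_m,[\xi,\eta]\rangle$. The only presentational difference is that the paper defines $\tilde{\omega}_{(K)}$ up front via the connection-induced splitting of $T(\tilde{M}\times_G\tilde{\Orbit}_{(K)})$ into $\ker Tp$ and a horizontal complement, whereas you extract it a posteriori from the fiber computation; the content is the same.
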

\begin{proof}

First, we observe that, as in the free case, using the connection $\mathcal{A}$ we can induce from the two-form $\mathbf{J}_{\tilde{\Orbit}_{(K)}}^{\ast}\omega_{\Orbit}^+$ on $\tilde{M}\times \tilde{\Orbit}_{(K)}$,  a form $\tilde{\omega}_{(K)}$ on
the quotient space $\tilde{M}\times_{G}\tilde{O}_{(K)}$. To construct this form, we need to first obtain a splitting of the tangent bundle of $\tilde{M}\times_{G}\tilde{\Orbit}_{(K)}$. Note that the vertical fibers of $p$ are given by
$$p^{-1}(\alpha_{[m]})\simeq (\Orbit_{\mu}\cap \mathfrak{h}^\circ_{(K)})/H$$ where $G_{m}=H$. These fibers carry a symplectic structure as we have determined in \eqref{stratifiedKK}. The tangent spaces to these fibers determine the vertical subbundle $V$ of $T(\tilde{M}\times_{G}\tilde{\Orbit}_{(K)})$: for each $[\lambda]=[(\alpha_{[m]},m),\nu_m]\in \tilde{M}\times_{G}\tilde{\Orbit}_{(K)}$,  $V_{[\lambda]}:=\mathrm{ker}\,T_{[\lambda]}p$. The connection, $\mathcal{A}$, determines a complement as follows. Given a vector $v_{\alpha_{[m]}}\in T_{\alpha_{[m]}}T^{\ast}(M/G)$ tangent to some curve $\alpha_{[m]}(t)$, denote the projected curve to $M/G$ by $[m](t):=\tau_{M/G}\circ\alpha_{[m]}(t)$. Then, letting $m(t)$ denote the horizontal lift of $[m](t)$ to $M$ through the point $m$ notice that, by construction, the curve $(\alpha_{[m]}(t),m(t))\in \tilde{M}$ and, furthermore, the curve is not contained in the $G$-orbit through $(\alpha_{[m]},m)$. Finally the tangent vector to the curve $((\alpha_{[m]}(t),m(t)),\nu_m)\in \tilde{M}\times\tilde{\Orbit}$ through $\lambda$, $(v_{(\alpha_{[m]},m)},0)$, is not contained in the kernel of $T_{[\lambda]}p$ and therefore we have constructed an injective map $\Phi_{[\lambda]}:T_{p([\lambda])}T^{\ast}(M/G)\rightarrow T_{[\lambda]}(\tilde{M}\times_{G}\tilde{\Orbit})$. The image of $\Phi_{[\lambda]}$ then has dimension complementary to $\mathrm{ker}\,T_{[\lambda]}p$. Now ranging over $[\lambda]$, this defines the horizontal distribution $H$ in $T(\tilde{M}\times_{G}\tilde{\Orbit})$ with $H_{[\lambda]}:=\mathrm{Im}\,\Phi_{[\lambda]}$. We then have a projection $P_{[\lambda]}$ onto the vertical space $V_{[\lambda]}$ corresponding to this splitting. The form $\tilde{\omega}_{(K)}$ is then defined on
$\tilde{M}\times_{G}\tilde{\Orbit}_{(K)}$ by
\begin{equation*}
\tilde{\omega}_{(K)}(v_{[\lambda]},w_{[\lambda]}):=-(\mathbf{J}_{\tilde{\Orbit}_{(K)}}^{\ast}\omega_{\Orbit}^{+})((0,v),(0,w))
\end{equation*}
where $v \in T_{\nu_m}\tilde{\Orbit}_{(K)}$ satisfies
$T_{\lambda}\pi_{(K)}(0,v)=P_{[\lambda]}(v_{[\lambda]})$ and
analogously for $w$. This is well defined due to the $G$-invariance
of the form
$\mathbf{J}_{\tilde{\Orbit}_{(K)}}^{\ast}\omega_{\Orbit}^{+}$. Next,
consider the basic form (from equation \eqref{vertical kill}) on
$\tilde{M}\times \tilde{\Orbit}_{(K)}$,
$-\mathbf{d}\left\langle\mathbf{J}_{\tilde{\Orbit}_{(K)}},\tA\right\rangle-\mathbf{J}_{\tilde{\Orbit}_{(K)}}^{\ast}\omega_{\Orbit}^{+}$.
We need to show that
\begin{equation}
\label{curvature equality}
\pi_{(K)}^{\ast}\left(\left\langle \tilde{\Phi},(\tau_{M/G}\circ p)^{\ast}\mathcal{B}\right\rangle-\tilde{\omega}_{(K)}\right)=\mathbf{d}\left\langle\mathbf{J}_{\tilde{\Orbit}_{(K)}},\tA\right\rangle+\mathbf{J}_{\tilde{\Orbit}_{(K)}}^{\ast}\omega_{\Orbit}^{+}.
\end{equation}
Fix a point $\lambda:=((\alpha_{[m]},m),\nu_m)\in
\tilde{M}\times\tilde{\Orbit}_{(K)}$. We consider three types  of
tangent vectors at this point: $(h_{(\alpha_{[m]},m)},0)$,
$\xi_{\tilde{M}\times\tilde{\Orbit}_{(K)}}(\lambda)$, and $(0,u)$
where $h_{(\alpha_{[m]},m)}$ is a horizontal vector at
$T_{(\alpha_{[m]},m)}\tilde{M}$ relative to $\tilde A$, $\xi\in\g$
 and $u\in
T_{\nu_m}\tilde{\Orbit}_{(K)}$. We prove equation \eqref{curvature
equality} by checking equality on all six pairs of these types of
tangent vectors.  Note that on vectors of the form
$\xi_{\tilde{M}\times\tilde{\Orbit}_{(K)}}(\lambda)$ both sides of
equation \eqref{curvature equality} are zero: trivially for the left
hand side, and for the right hand side, by equation \eqref{vertical
kill}. Therefore it will suffice to verify equation \eqref{curvature
equality} on the three types of pairs generated by types
$(h_{(\alpha_{m},m)},0)$ and $(0,u)$, which we call horizontal and
momentum vectors respectively.
\paragraph {\bf{Horizontal, Horizontal}} Consider the pair $(\tilde{u}_1,0),(\tilde{u}_2,0)\in T_{\lambda}(\tilde{M}\times\tilde{\Orbit}_{(K)})$ where
$\tilde{u}_i$ is a horizontal vector on $\tilde{M}$ at the point
$(\alpha_{[m]},m)$ and
$u_i:=T_{(\alpha_{[m]},m)}\tilde{\tau}(\tilde{u_i})\in T_mM$. We use
the notational convention that an upper case letter is a vector
field that extends the lower case tangent vector. Extend these
vectors to vector fields $(\tilde{U}_i,0)$ where $\tilde{U}_i$ is a
horizontal vector field on $\tilde{M}$, so that the extended vector
fields are given by
$(\tilde{U}_i,0)((\alpha_{[m]},m),\nu_m)=(\tilde{U}_i(\alpha_{[m]},m),0)$.
Denoting by $U_i$ the uniquely defined $\tilde{\tau}$-related
horizontal  vector fields on $M$ given by $T\tilde{\tau}\circ
\tilde{U}_i={U}_i\circ\tilde{\tau}$, we then have,
\begin{align*}
\mathbf{d}\left\langle\mathbf{J}_{\tilde{\Orbit}_{(K)}},\tilde{\mathcal{A}}\right\rangle&((\tilde{u}_1,0),(\tilde{u}_2,0))=\\
&=(\tilde{U}_1,0)\left(\left\langle\mathbf{J}_{\tilde{\Orbit}_{(K)}},\tilde{\mathcal{A}}\right\rangle(\tilde{U}_2,0)\right)(\lambda)-
(\tilde{U}_2,0)\left(\left\langle\mathbf{J}_{\tilde{\Orbit}_{(K)}},\tilde{\mathcal{A}}\right\rangle(\tilde{U}_1,0)\right)(\lambda)\\&\qquad-
\left\langle\mathbf{J}_{\tilde{\Orbit}_{(K)}},\tilde{\mathcal{A}}\right\rangle\left([(\tilde{U}_1,0),(\tilde{U}_2,0)]\right)(\lambda)\\
&=-
\left\langle\mathbf{J}_{\tilde{\Orbit}_{(K)}},\tilde{\mathcal{A}}\right\rangle\left([(\tilde{U}_1,0),(\tilde{U}_2,0)]\right)(\lambda)\\
&=-\left\langle \nu_m,\tilde{\mathcal{A}}\left([\tilde{U}_1,\tilde{U}_2]\right)(\alpha_{[m]},m)\right\rangle\\
&=-\left\langle \nu_m, \mathcal{A}\left(T\tilde{\tau}[\tilde{U}_1,\tilde{U}_2]\right)(\alpha_{[m]},m)\right\rangle
=-\left\langle \nu_m, \mathcal{A}\left([U_1,U_2]\right)(m)\right\rangle\\
&=\left\langle \nu_m, B(u_1,u_2)\right\rangle,
\end{align*}

where we have used the fact that $T\tilde{\tau}\circ \tilde{U}_i=U_i\circ\tilde{\tau}$, so that
$T\tilde{\tau}\circ[\tilde{U}_1,\tilde{U}_2]=[U_1,U_2]\circ\tilde{\tau}$, and also the definition of the curvature
of the singular connection in the final equality.

We now compute the left hand side of equation \eqref{curvature equality} on the horizontal vectors,
\begin{align*}
\pi_{(K)}^{\ast}&\left(\left\langle\tilde{\Phi},(\tau_{M/G}\circ
p)^{\ast}\mathcal{B}\right\rangle\right)((\tilde{u}_1,0),
(\tilde{u}_2,0))=\\ &=\left\langle[m,\nu_m],(\tau_{M/G}\circ p)^{\ast}\mathcal{B}(T_{\lambda}\pi_{(K)}(\tilde{u}_1,0),T_{\lambda}\pi_{(K)}(\tilde{u}_2,0))\right\rangle\\
&=\left\langle [m,\nu_m],\mathcal{B}(T_m\pi\circ T_{(\alpha_{[m]},m)}\tilde{\tau}\circ T_{\lambda}\pi_{\tilde M}(\tilde{u}_1,0),
(T_m\pi\circ T_{(\alpha_{[m]},m)}\tilde{\tau}\circ T_{\lambda}\pi_{\tilde M}(\tilde{u}_2,0))\right\rangle\\
&=\left\langle \nu_m,\mathcal{B}(T_m\pi\circ T_{(\alpha_{[m]},m)}\tilde{\tau}(\tilde{u}_1),T_m\pi\circ T_{(\alpha_{[m]},m)}\tilde{\tau}(\tilde{u}_2))\right\rangle\\
&=\left\langle \nu_m,\mathcal{B}(T_m\pi (u_1),T_m\pi (u_2))\right\rangle=\left\langle \nu_m,B(u_1,u_2)\right\rangle,
\end{align*}
agreeing with the right hand side.

\paragraph{\bf{Momentum, Momentum}}
By definition of the form $\tilde{\omega}_{(K)}$ we have, for $(0,v),(0,w)\in T_{\lambda}(\tilde{M}\times\tilde{\Orbit}_{(K)})$,
\begin{align*}
\pi_{(K)}^{\ast}\left(\left\langle \tilde{\Phi},(\tau_{M/G}\circ p)^{\ast}\mathcal{B}\right\rangle-\tilde{\omega}_{(K)}\right)((0,v),(0,w))&=-
\pi_{(K)}^{\ast}\tilde{\omega}_{(K)}((0,v),(0,w))\\ &=\mathbf{J}_{\tilde{\Orbit}_{(K)}}^{\ast}\omega_{\Orbit}^{+}((0,v),(0,w))\\
&=\left(\mathbf{d}\left\langle\mathbf{J}_{\tilde{\Orbit}_{(K)}},\tA\right\rangle+\mathbf{J}_{\tilde{\Orbit}_{(K)}}^{\ast}\omega_{\Orbit}^{+}\right)((0,v),(0,w)).
\end{align*}
The first equality holds since $T_{\lambda}\pi_{(K)}(0,v)$ is in the kernel
of $T_{[\lambda]}p$.
The second equality holds by definition of the form $\tilde{\omega}_{(K)}$ and last follows by extending $(0,v)$ and $(0,w)$ to vector fields $(0,V),(0,W)$ where $V$ and $W$ are vector fields on $\tilde{\Orbit}_{(K)}$ and then using the fact that the bracket of these fields is just $(0,[V,W])$.

\paragraph{\bf{Horizontal, Momentum}}
On a pair of mixed vectors, both sides of equation \eqref{curvature
equality} vanish since horizontal vector fields commute with
momentum vector fields. We have now proven equation \eqref{curvature
equality}, and therefore by equation \eqref{the basic form}, the
theorem follows.

\end{proof}

\section{The Poisson stratification}
In this section we compute the Poisson stratification of the
reduced Poisson Sternberg space $S=\tilde{M}\times_G\nu^{\ast}$, and we write down the reduced gauge bracket on each of the
Poisson strata.

Following the setup of Section 5, the $G$-equivariant symplectomorphim $\phi_\mathcal{A}$ descends to a stratified isomorphism $\tilde{\phi}_\mathcal{A}:S\rightarrow (T^*M)/G$. Since $S$ is a quotient of a smooth manifold by a proper group action, it has a natural stratification with strata $S^{(K)}=(\tilde{M}\times\nu^*)_{(K)}/G$. Since $\tilde{M}$ is a single orbit type manifold, the orbit types of the pre-quotiented space correspond to the orbit types of the total space of the bundle $\nu^*$, which were shown, in the proof of Theorem \ref{lattice theorem}, to be in one-to-one correspondence with the isotropy lattice for the action $H\times \h^\circ\rightarrow \h^\circ$. Therefore the strata of $S$ are given by
$$S^{(K)}=\tilde{M}\times_G\nu^*_{(K)}.$$

Moreover, since $\tilde{M}\times \nu^*$ is a symplectic manifold
with  symplectic structure $\phi_{\mathcal{A}}^{\ast}\omega_M$,it
follows from the general theory that the orbit type strata of $S$
are actually Poisson, the Poisson structure coming from singular
reduction. The symplectic leaves of these Poisson structures are
exactly the manifolds $\tilde{M}\times_G\tilde{\Orbit}_{(K)}$
equipped with the minimal coupling forms
$\omega^{\Orbit,(K)}_\mathrm{min}$ of Theorem \ref{minimal coupling
theorem}. Instead of using the theory of singular reduction to
compute the reduced Poisson brackets on the strata $S^{(K)}$, in
Theorem \ref{gauge poisson theorem} we will postulate these brackets
and then verify that they actually produce the minimal symplectic
foliation of Theorem \ref{minimal coupling theorem}. By the
uniqueness of the symplectic foliation of a Poisson manifold it
follows that the postulated brackets are actually the reduced gauge
Poisson brackets.

\begin{theorem}
\label{gauge poisson theorem} Let $s=[(\alpha_{[x]},x),\mu_x]\in
S^{(K)}$, where without loss of generality we have chosen $G_x=H$
and $H_{\mu_x}=K$. Let $f,g\in C^\infty(S^{(K)})$. Then the reduced
Poisson structure on $S^{(K)}$ is given by
\begin{equation*}\begin{array}{lll}\{f,g\}_{S^{(K)}}(s) & = & \omega_{M/G}
(\alpha_{[x]})\left(\mathbf{d}_{\tilde{\mathcal{A}}}^Sf(s)^\sharp,\mathbf{d}_{\tilde{\mathcal{A}}}^Sg(s)^\sharp\right)
+\left\langle
[\mu_x],\tilde{\mathcal{B}}(\alpha_{[x]})(\mathbf{d}_{\tilde{\mathcal{A}}}^Sf(s)^\sharp,
\mathbf{d}_{\tilde{\mathcal{A}}}^Sg(s)^\sharp\right\rangle\vspace{8mm}\\
 & - & \left\langle\mu_x,\left[\frac{\delta
\overline{f}^\circ}{\delta\mu },\frac{\overline{g}^\circ}{\delta\mu
}\right]\right\rangle , \end{array}\end{equation*}
where the sharp operator in the first term is with respect to $\omega_{M/G}$ and the covariant derivative $\mathbf{d}_{\tilde{\mathcal{A}}}^S$ is computed using the singular connection as in equation \eqref{covariant derivative} for the regular case. In the second term, $\tilde{\mathcal{B}}=\tau_{M/G}^{\ast}\mathcal{B}$ where $\mathcal{B}$ is the reduced curvature of the singular connection, $\mathcal{A}$. For the third term, $\overline{f}$ is the restriction of $f$ to the fiber of $\nu^*_{(K)}/G$ at $[x]$, which is isomorphic to $\h^\circ_{(K)}/H$, and $\overline{f}^\circ$ is a $H$-invariant extension to $\g^*$ of the lift of $\overline{f}$ to $\h^\circ_{(K)}$.
Note that the last
term is simply a $(-)$ homogeneous Lie-Poisson bracket
$$\{\overline{f},\overline{g}\}_{(K)}([\mu_x])$$ on the fiber of $\nu^*_{(K)}/G$ at $[x]$ as introduced in Section 3, in equation \eqref{homogeneous bracket}.
\end{theorem}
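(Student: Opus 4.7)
The strategy is exactly the one announced just before the statement: take the displayed formula as a \emph{candidate} Poisson bracket on the stratum $S^{(K)}=\tilde M\times_G \nu^*_{(K)}$, verify that it is a bona fide Poisson bracket, and then prove that its symplectic foliation coincides with the minimal coupling foliation established in Theorem \ref{minimal coupling theorem}. Once this agreement is established, the fact that a Poisson structure on a manifold is uniquely determined by its symplectic foliation (together with the leafwise symplectic form) forces the candidate bracket to coincide with the reduced gauge bracket obtained by singular Poisson reduction from $\tilde\phi_{\mathcal{A}}^*\{\cdot,\cdot\}_{T^*M}$.

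The first task is to check that each of the three terms is well defined on $S^{(K)}$. For the first, one mimics the construction of Section 2: the singular connection $\tilde{\mathcal{A}}$ on $\tilde M$ supplies a horizontal lift of vectors from $T^*(M/G)$ to $\tilde M\times\nu^*$, and the covariant derivative $\mathbf{d}_{\tilde{\mathcal{A}}}^S f$ is defined by pairing $\mathbf{d}f$ with this lift; the result depends only on the class $s\in S^{(K)}$ and lives in $T^*_{\alpha_{[x]}}T^*(M/G)$, where $\sharp$ with respect to $\omega_{M/G}$ is applicable. For the second, $\tilde{\mathcal{B}}=\tau_{M/G}^*\mathcal{B}$ takes values in the quotient $\nu_{(H)}/G$ by Proposition \ref{proposition curvature}, and the pairing with $[\mu_x]\in\nu^*_{(K)}/G$ is natural across the stratification since $\nu^*_{(K)}/G$ is dual to $\nu_{(H)}/G$ at $[x]$ by fixedness of $H(M)_x$ under $H$. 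For the third, any two $H$-invariant extensions $\overline{f}^\circ$ of $\overline{f}|_{\h^\circ_{(K)}}$ to $\g^*$ differ by an element of the ideal $I(\h^\circ_{(K)})$ of Section 3, so by Lemma \ref{invariance lemma} the value $\mathrm{ad}^*_{\delta\overline{f}^\circ/\delta\mu}\mu_x$ is independent of the extension modulo vectors tangent to $H$-orbits, and the homogeneous Lie-Poisson bracket \eqref{homogeneous bracket} is thereby well defined.

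The next step is to identify the symplectic leaves. The Hamiltonian vector field of $f$ splits, via the connection, into a horizontal piece lifted from $T^*(M/G)$ (coming from the first and second terms) and a vertical piece tangent to the fiber $\h^\circ_{(K)}/H$ over $[x]$ (coming from the third term). The vertical flows coincide with the homogeneous Lie-Poisson flows of Section 3, whose symplectic leaves are exactly $(\Orbit\cap\h^\circ_{(K)})/H$; the horizontal flows sweep out $T^*(M/G)$ via the canonical symplectic flow perturbed by curvature. Piecing these together globally using the equivariant diffeomorphism $\phi_{\mathcal{A}}$, the leaf through $s=[(\alpha_{[x]},x),\mu_x]$ is precisely the stratified sub-bundle $\tilde M\times_G \tilde{\Orbit}_{(K)}\subset S^{(K)}$, where $\Orbit$ is the $G$-coadjoint orbit through $\mu_x$. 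To match the leafwise two-form with the minimal coupling form $\omega^{\Orbit,(K)}_{\mathrm{min}}$ of Theorem \ref{minimal coupling theorem}, one evaluates both objects on the three classes of tangent vectors (horizontal, momentum, and mixed) used in the proof of that theorem. The canonical term pairs dually with $(\pi^\sharp)^*\omega_{M/G}$ on horizontal–horizontal pairs, the curvature term pairs with $-\langle\tilde\Phi,(\tau_{M/G}\circ p)^*\mathcal{B}\rangle$ again on horizontal–horizontal pairs (this is the content of the horizontal–horizontal computation in the proof of Theorem \ref{minimal coupling theorem}), and the homogeneous Lie-Poisson term dualizes to the homogeneous KKS form $\tilde{\omega}_{(K)}$ on momentum–momentum pairs; mixed pairs vanish on both sides.

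Bilinearity, antisymmetry and the Leibniz rule are immediate from the formula; the Jacobi identity is where the real work lies, and it is the step I expect to be the main obstacle. The cleanest route is to avoid a direct three-term cyclic computation (which would mix canonical, curvature, and Lie-Poisson pieces, generating Bianchi-type cross terms) and instead deduce Jacobi from the leafwise calculation just described: since every Hamiltonian flow of the candidate bracket preserves the stratified foliation by the $\tilde M\times_G\tilde\Orbit_{(K)}$, and since on each such leaf the restriction of the candidate bracket comes from the closed non-degenerate two-form $\omega^{\Orbit,(K)}_{\mathrm{min}}$, Jacobi on smooth functions follows from Jacobi for the symplectic brackets on the leaves. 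With Jacobi in hand, uniqueness of the symplectic foliation identifies the candidate with the reduced gauge bracket, completing the proof.
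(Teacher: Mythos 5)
Your proposal is correct and follows the strategy the paper itself announces just before the theorem (postulate the bracket, verify it, and identify it with the reduced bracket by matching the symplectic foliation of Theorem \ref{minimal coupling theorem}), but the execution differs from the paper's in two respects worth noting. First, the paper carries out the entire verification in adapted local coordinates: it trivializes $S^{(K)}$ over a neighborhood $U\times\h^\circ_{(K)}/H$ of $T^*(M/G)$, computes $\mathbf{d}_{\tilde{\mathcal{A}}}^S$ on the coordinate functions $x^i,p_i$ and on fiber functions, writes the candidate bracket as the explicit tensor \eqref{gauglocal}, then computes the Hamiltonian vector fields of $\omega^{\Orbit,(K)}_{\mathrm{min}}$ (note $X_{p_i}=\partial_{x^i}-\mu_\alpha B^\alpha_{ij}\partial_{p_j}$, not just $\partial_{x^i}$) and checks the six pairings directly. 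Your invariant description of the leafwise matching is right in outline, but it is precisely in the horizontal--horizontal pairing that the coordinate computation earns its keep: the curvature correction to $X_{p_i}$ and the curvature term in $\omega^{\Orbit,(K)}_{\mathrm{min}}$ combine with a sign that a purely verbal ``the curvature term pairs with the curvature term'' argument could easily get wrong, so this step of your argument should be fleshed out before it counts as a proof. Second, for the Jacobi identity the paper checks the only nontrivial cyclic sum, $\{p_i,\{p_j,p_k\}\}$, directly via the Bianchi identity of Remark \ref{ehresmann approach} (closedness of $\left\langle[\mu],\mathcal{B}\right\rangle$), whereas you deduce Jacobi from the fact that the candidate bracket is tangent to the leaves $\tilde{M}\times_G\tilde{\Orbit}_{(K)}$ and restricts there to the bracket of the closed nondegenerate form $\omega^{\Orbit,(K)}_{\mathrm{min}}$. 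Your route is legitimate (and in fact makes the separate Jacobi check logically redundant, since the leafwise agreement already identifies the candidate with the known reduced Poisson structure), but it silently uses that $\{f,g\}(s)$ depends only on the restrictions of $f,g$ to the leaf through $s$ --- true here because the covariant differentials and the fiber derivatives only probe leaf directions, but this should be said explicitly. What the paper's Bianchi argument buys is an intrinsic explanation of \emph{why} the coupling term is compatible with Jacobi, independent of the reduction picture; what your argument buys is the avoidance of any coordinate computation at the cost of leaning more heavily on the already-established foliation of Theorem \ref{minimal coupling theorem}.
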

\begin{proof}
Since $S^{(K)}$ is Poisson, the theorem is proved if for any pair
$f,g\in C^\infty(S^{(K)})$ and $s\in
\tilde{M}\times_G\tilde{\Orbit}_{(K)}\subset S^{(K)}$,
$\{f,g\}_{S^{(K)}}(s)=\omega^{\Orbit,(K)}_\mathrm{min}(X_f(s),X_g(s))$,
where $X_f,X_g$ are the Hamiltonian vector fields associated to the
restrictions of $f$ and $g$ to
$\tilde{M}\times_G\tilde{\Orbit}_{(K)}$. Let $U=O\times
\mathbb{R}^n$ be a trivializing neighborhood of $T^*(M/G)$  and
shrink $O$ and $\mathbb{R}^n$ if necessary so that $S^{(K)}$ is
trivialized over $U$ like $S_U^{(K)}=U\times
\mathfrak{h}^\circ_{(K)}/H$. If $x^i,\,p_i,\,i=1,\ldots,n$ are
bundle coordinates on $U$ we will consider the family of functions
$x^i,\,p_i$ and $f\in C^\infty(\mathfrak{h}^\circ_{(K)}/H)$, whose
differentials span the cotangent bundle of $S^{(K)}$ at any point of
$S_{U}^{(K)}$.

Let $(x^1,\ldots,x^n,g^1,\ldots,g^k)$ be local coordinates on $M$
over $O$ and $A_i^l,\,i=1,\ldots,n+k,\,l=1,\ldots,k$ the components
of the connection $\mathcal{A}$. The local coordinates $\{g^l\}$ on
$G$ are chosen in a way that $A(\partial_{g^l})=\xi_l$, where
$\{\xi_1,\ldots,\xi_k\}$ is a basis for $\g$ for which, for $r<k$, $\{\xi_1,\ldots,\xi_r\}$ is a basis for $\g_x$.
Then the horizontal
lift of a local vector field $\partial_{x^i}$ on $O$ is
$\partial_{x^i}-A_i^l\partial_{g^l}.$

Therefore, we have
$\mathrm{hor}_s(\partial_{x^{i}})=\partial_{x^{i}}$ and
$\mathrm{hor}_s(\partial_{p_{i}})=\partial_{p_{i}}$. Consequently we
obtain
$$\begin{array}{lll}\mathbf{d}_{\tilde{\mathcal{A}}}^Sx^i(s) & = &
dx^i\\
\mathbf{d}_{\tilde{\mathcal{A}}}^Sp_i(s) & = &
dp_i\\
\mathbf{d}_{\tilde{\mathcal{A}}}^Sf(s) & = & 0\end{array}$$ Since in
this trivialization $\omega_{M/G}$ is given by $dx^i\wedge dp_i$ it
follows that
$$\begin{array}{lll}\mathbf{d}_{\tilde{\mathcal{A}}}^Sx^i(s)^\sharp & = &
-\partial_{p_i}\\
\mathbf{d}_{\tilde{\mathcal{A}}}^Sp_i(s)^\sharp & = &
\partial_{x^i}\\
\mathbf{d}_{\tilde{\mathcal{A}}}^Sf(s)^\sharp & = & 0.\end{array}$$

Next, linear coordinates for $\g^*$ with respect to the dual basis
$\{\xi_1,\ldots,\xi_k\}$ are given by $\{\mu_1,\ldots,\mu_k\}$. Let
$B_{ij}^\alpha$, $i,j=1,\ldots,n,\,\alpha=r+1,\ldots,k$

be the local expression for the components of $\mathcal{B}$, the
reduced curvature of $\mathcal{A}$. Then the local expressions for
the bracket in the statement of the theorem are given by

\begin{equation}\label{gauglocal}\begin{array}{lll}
\{x^i,p_j\}(s) & = & \delta^i_j\\
\{p_i,p_j\}(s) & = & \mu_\alpha B_{ij}^\alpha ,\, \alpha=r+1,\ldots,k\\
\{x^i,x^j\}(s) & = & 0\\
\{x^i,f\}(s) & =& 0\\
\{p_i,f\}(s) & = & 0\\
\{f,g\}(s) & = & -\left\langle\mu,\left[\frac{\partial
f^\circ}{\partial\mu},\frac{\partial
g^\circ}{\partial\mu}\right]\right\rangle .
\end{array}\end{equation}
This is easily checked to define a Poisson tensor. The only point that requires a straightforward calculation is to check the Jacobi identity on a bracket of type $\{p_i,\{p_j,p_k\}\}$. But, since the singular curvature satisfies the Bianchi identity (see Remark \ref{ehresmann approach}), it follows that the two-form $\left\langle [\mu],\mathcal{B}\right\rangle$ is closed which implies Jacobi. Antisymmetry is implied by the antisymmetry of the reduced curvature form.

Note that the last expression is nothing but the homogeneous
Lie-Poisson bracket of Section 3, and that it restricts on each momentum fiber of
$\tilde{M}\times_G\tilde{\mathcal{O}}_{(K)}$ over $T^*(M/G)$ to the homogeneous reduced
symplectic form in \eqref{stratifiedKK}.

We now compute the Hamiltonian vector fields on a given typical
symplectic leaf $\tilde{M}\times_G\tilde{\mathcal{O}}_{(K)}$ with
respect to $\omega^{\Orbit,(K)}_\mathrm{min}$, as well as the
Poisson structure on these leaves induced by
$\omega^{\Orbit,(K)}_\mathrm{min}$.

It easily follows from Theorem \ref{minimal coupling theorem} that
in our local coordinates,
$$\omega^{\Orbit,(K)}_\mathrm{min}=dx^i\wedge dp_i-\mu_\alpha B^\alpha_{ij}\,dx^i\wedge
dx^j+\tilde\omega_{(K)},\quad i,j=1,\ldots,n,\,\alpha=r+1,\ldots
k.$$

From here, we  immediately obtain the associated Hamiltonian vector
fields
\begin{eqnarray*}
X_{x^i} & = & -\partial_{p_i}\\
X_{p_i} & = & \partial_{x^i} -\mu_\alpha B^\alpha_{ij}\partial_{p_j}
\end{eqnarray*}
for $i,j=1,\ldots,n,
\,\alpha=r+1,\ldots k.$ If $f\in C^\infty(\h^\circ_{(K)}/H)$, we
denote also by $f$ its restriction to $\mathcal{O}_{(K)}/H$, the
typical fiber of the fibration
$p:\tilde{M}\times_G\tilde{\mathcal{O}}_{(K)}\rightarrow T^*(M/G)$.
Then $X_f$ is defined by
$$\tilde\omega_{(K)}(X_f,\cdot )=df.$$

From these local expressions it is now clear that

$$\begin{array}{lll}
\omega^{\Orbit,(K)}_\mathrm{min}(X_{x^i},X_{x^j}) & = & 0\\
\omega^{\Orbit,(K)}_\mathrm{min}(X_{x^i},X_{p_j}) & = & \delta_i^j\\
\omega^{\Orbit,(K)}_\mathrm{min}(X_{p_i},X_{p_j}) & = & -\mu_\alpha B_{ji}^\alpha=\mu_\alpha B_{ij}^\alpha\\
\omega^{\Orbit,(K)}_\mathrm{min}(X_{x^i},X_f) & = & 0\\
\omega^{\Orbit,(K)}_\mathrm{min}(X_{p_i},X_f) & = & 0\\
\omega^{\Orbit,(K)}_\mathrm{min}(X_f,X_g) & = &
\tilde\omega_{(K)}(X_f,X_g)=\{f,g\}_{(K)\vert
\tilde{M}\times_G\tilde{\mathcal{O}}_{(K)}}
\end{array}$$

which agree with \eqref{gauglocal} by \eqref{homogeneous bracket}
and the discussion of Section 3.
\end{proof}

\end{document}